\newtheorem{theorem}{Theorem}[section]
\newtheorem{assumption}{Assumption}[section]
\newtheorem{corollary}{Corollary}[section]
\newtheorem{definition}{Definition}[section]
\newtheorem{lemma}{Lemma}[section]
\newtheorem{claim}{Claim}[section]
\newtheorem{proposition}{Proposition}[section]
\newtheorem{remark}{Remark}[section]
\theoremstyle{plain}
\newcommand{\al}{\alpha}
\newcommand{\be}{\beta}
\newcommand{\La}{\Lambda}
\newcommand{\la}{\lambda}
\newcommand{\ga}{\gamma}
\newcommand{\Ga}{\Gamma}
\newcommand{\ep}{\epsilon}
\newcommand{\sig}{\sigma}
\newcommand{\de}{\delta}
\newcommand{\R}{\mathbb{R}}
\newcommand{\RR}{{\rm I\kern -1.6pt{\rm R}}}
\numberwithin{equation}{section}
\numberwithin{figure}{section}
\begin{document}
	\title{\bf The principal eigenvalue of an age-structured operator with diffusion and advection: qualitative analysis and an application
		\thanks{H. Kang was supported by NSF of China (No. 12301259, 12371169), R. Peng was supported by NSF of China (No. 12271486, 12171176) and Natural Science Foundation of Zhejiang Province, and M. Zhou was supported by the Nankai Zhide Foundation and NSF of China (No. 11971498).}}
	
	\author{{\sc Hao Kang$^{a}$, Rui Peng$^{b}$ and Maolin Zhou$^{c}$}\\[2mm]
		{\small  $^{a}$Center for Applied Mathematics and KL-AAGDM, Tianjin University, Tianjin 300072, China} \\
		{\small Email: haokang@tju.edu.cn}\\
		{\small  $^{b}$School of Mathematical Sciences, Zhejiang Normal University, Jinhua, Zhejiang, 321004, China} \\
		{\small Email: pengrui\_seu@163.com}\\
		{\small  $^{c}$Chern Institute of Mathematics, Nankai University, Tianjin 300071, China} \\
		{\small Email: zhouml123@nankai.edu.cn}
	}
	
	\maketitle
	
	\begin{abstract}
		In this paper, we investigate an eigenvalue problem associated with an age-structured operator incorporating random diffusion and advection. Our primary focus is on examining the asymptotic behaviors of the principal eigenvalue with respect to large advection and small or large diffusion rates. We subsequently apply these results to a nonlinear age-structured model, providing a better understanding of how diffusion and advection influence the spatial distribution of species. Among other ingredients, our approach involves constructing various types of super- and sub-solutions to tackle the novel challenges posed by the nonlocal terms in the problems under consideration.
		
		\vspace{0.3cm} \noindent {\em Key words:} Age-structured operator; Principal eigenvalue; Diffusion and advection; Global dynamics; Positive equilibrium; Asymptotic profile.
		
		\vskip 0.2cm
		
		\noindent {\em AMS subject classifications:}~  35K57, 47A10, 35P15, 35Q92, 92D25
	\end{abstract}
	
	{
		\hypersetup{linkcolor=black}
		\tableofcontents
	}
	
	\section{Introduction}
	In modeling the population dynamics of biological species and the transmission dynamics of infectious diseases, it is crucial to consider directional movement, often described by the advection term. For instance, species migrate to areas with more resources, plant seeds disperse via wind, and fish travel along river streams. Over the past decades, various reaction-diffusion-advection models from physics and biology have been developed and studied in the literature (see \cite{berestycki2005elliptic, chen2008principal, chen2012effects, cui2016spatial, cui2017dynamics, cui2021concentration,KMP2017, liu2019monotonicity, liu2021asymptotics, liu2021asymptoticsII, lou2015evolution, lou2016qualitative, lou2019global, peng2015effects, peng2018effects, peng2019asymptotic, zhao2016lotka, zhou2016lotka, zhou2018global}). Among these works, the investigation of diffusion and advection effects on the principal eigenvalue for both elliptic and parabolic operators has become increasingly important and attracted considerable attention. These studies are essential for understanding threshold dynamics and propagation phenomena in nonlinear diffusive models, as highlighted in \cite{berestycki2005elliptic, chen2008principal, chen2012effects, lam2016asymptotic, peng2015effects, peng2018effects, peng2019asymptotic, liu2021asymptotics, liu2021asymptoticsII} and the references therein.
	
	In describing certain ecological problems, where not only the dispersion of individuals in the environment is considered, but also the birth and death of these individuals are modeled; one may refer to, for instance, Fife \cite{fife2017integrodifferential}, Hutson et al. \cite{hutson2003evolution}, Medlock and Kot \cite{medlock2003spreading}, and Murray \cite{murray2007mathematical}. To incorporate birth and death processes, in the present work we will be concerned with the eigenvalue problem of an age-structured operator with random diffusion and advection. Such an operator can collect information of random dispersion (diffusion), directional movement (advection) in the environment and birth-death process (age-structure) of the population simultaneously.
	
	More precisely, in this paper we are interested in the following linear eigenvalue problem:
	\begin{equation}\label{linear}
		\begin{cases}
			u_a=du_{xx}+\Lambda q(x)u_x-\mu(a, x)u-\lambda u,&\quad (a, x)\in(0, a_m)\times(0, 1),\\
			u_x(a, 0)=0, \; u_x(a, 1)=0, &\quad a\in(0, a_m),\\
			u(0, x)=\int_{0}^{a_m}\beta(a, x)u(a, x)da, &\quad x\in(0, 1).
		\end{cases}
	\end{equation}
	The parameter \( a_m \le \infty \) denotes the maximum age, while the spatial domain is simply considered to be \( (0, 1) \), a bounded interval in \(\mathbb{R}\). The function $q: [0, 1]\to\R_+\setminus\{0\}$ is continuous, representing the directional flow, and the constants \( d>0 \) and \( \Lambda\in\R\) denote the random diffusion rate and directional advection rate, respectively.  The functions \(\beta(a,x)\) and \(\mu(a,x)\) stand for the birth and death rates of the population at age \( a \) and location \( x \), respectively. 
	
	From now on, denote
	\begin{eqnarray}
		&&\underline{\mu}(a):=\min_{x\in[0, 1]}\mu(a, x),\;\ \overline{\mu}(a):=\max_{x\in[0, 1]}\mu(a, x),\nonumber\\
		&&\underline{\beta}(a):=\min_{x\in[0, 1]}\beta(a, x),\;\ \overline{\beta}(a):=\max_{x\in[0, 1]}\beta(a, x).\nonumber
	\end{eqnarray} 
	In light of their realistic implications in age-structured models, we shall impose the following assumptions on the birth rate $\be$ and death rate $\mu$; one may refer to \cite{Ducrot2022Age-structuredI,Ducrot2022Age-structuredII}.
	\begin{assumption}\label{Ass}
		{\rm The birth rate $\beta=\beta(a, x)$ and the death rate $\mu=\mu(a, x)$ satisfy the following conditions:
			\begin{itemize}
				\item [(i)] For any $x\in[0, 1]$, the function $a\mapsto\beta(a, x)\in L^\infty_+(0, a_m)$ and the mapping $x\mapsto\beta(\cdot, x)$ is of $C^2$ from $[0, 1]$ to $L^\infty(0, a_m)$. That is, $\beta\in C^2([0, 1], L_+^\infty(0, a_m))$;
				
				\item [(ii)]  There exists $a_c\in(0, a_m)$ such that $\beta\equiv0$ on $[a_c, a_m)\times[0, 1]$ and $\int_a^{a_c}\underline\beta(l)dl>0$ for any $a\in[0, a_c)$;
				
				\item [(iii)] For any $x\in[0, 1]$, the function $a\mapsto\mu(a, x)\in L^\infty_{\rm{loc}, +}(0, a_m)$ and the mapping $x\mapsto\mu(\cdot, x)$ is of $C^0$ from $[0, 1]$ to $L_{{\rm loc}}^\infty(0, a_m)$. That is, $\mu\in C^0([0, 1], L_{{\rm loc}, +}^\infty(0, a_m)$.
		\end{itemize}}
	\end{assumption} 
	
	Regarding Assumption \ref{Ass}, we would like to make the following comments.
	
	\begin{remark}\label{RK} {\rm 
			\begin{itemize}
				
				\item [(i)] The \( C^2 \)-regularity of \( \beta \) with respect to \( x \) in Assumption \ref{Ass}-(i) is utilized to ensure the smoothness of the solution to the problems under consideration with respect to \( x \) for any given \( a \in [0, a_m) \).
				
				\item [(ii)] Assumption \ref{Ass}-(ii) is reasonable for applications, which means that the birth rate becomes zero when the age of the individuals approaches the maximum age $a_m$. In particular, Assumption \ref{Ass}-(ii) will allow us to consider equivalently the eigenvalue problem \eqref{linear} on $[0, a_+]\times[0, 1]$ for any $a_+\in[a_c, a_m)$; see Remark \ref{remark-2.1} below. Moreover, in the domain $[0, a_+]\times[0, 1]$, the singularity of $\mu$ at $a_m$ is avoided, since $\mu(a_m, \cdot)$ may blow up when $a_m<\infty$. 
				
				\item [(iii)] Note that in Assumption \ref{Ass}-(iii), $\underline{\mu} \in L^\infty_{{\rm loc}, +}(0, a_m)$ allows $\int_0^{a_m}\underline{\mu}(a)da=+\infty$ to hold when $a_m<\infty$, which means in biological modeling that the population density usually reaches zero at its maximum age. The function $\mu$ can be chosen to blow up at $a_m$ when $a_m$ is finite, and one may refer to Figure 1.2 from \cite{inaba2017age} for such an example.
				Assumption \ref{Ass}-(iii) is an improvement over the previous work \cite{kang2022effects} by the first-named author, where such a situation was not addressed. 
				
			\end{itemize}
			
		}
	\end{remark}

	Throughout this paper, when we refer to a real number \(\lambda_0\) as a principal eigenvalue of the eigenvalue problem \eqref{linear}, we mean that there exists an eigenfunction \(u\) corresponding to \(\lambda_0\) such that \((\lambda_0, u)\) solves \eqref{linear} and \(u \geq 0\), \(u \not\equiv 0\). Under Assumption \eqref{Ass}, we can show the existence and uniqueness of the principal eigenvalue of \eqref{linear}; see Lemma \ref{lem3.1} below. The primary aim of this paper is to explore the asymptotic behaviors of this principal eigenvalue concerning large advection and small or large diffusion rates; see Section 2.1 for the precise statements.
	
	As an application of our results on the principal eigenvalue, we examine a nonlinear age-structured model. This model serves to characterize the temporal-spatial dynamics of biological species, taking into account the significant role of age structure within the population and the nature of random dispersal and directional flow. More precisely, the nonlinear model under consideration takes the following form:
	\begin{equation}\label{nonlinear}
		\begin{cases}
			u_t+u_a=du_{xx}-\La u_x-\mu(a, x)u, &(t, a, x)\in(0, \infty)\times(0, a_m)\times(0, 1),\\
			u(t, 0, x)=f\left(x, \int_0^{a_m}\beta(a, x)u(t, a, x)da\right), &(t, x)\in(0, \infty)\times(0, 1),\\
			du_x(t, a, x)-\La u(t, a, x)=0, &(t, a, x)\in(0, \infty)\times(0, a_m)\times\{0, 1\},\\
			u(0, a, x)=u_0(a, x), &(a, x)\in(0, a_m)\times(0, 1).
		\end{cases}
	\end{equation}
	Here, \( u(t, a, x) \) denotes the density of the population at time $t$ with age \( a \) and at position \( x \) and \( f \) is a monotone-type nonlinearity that describes the growth rate of the population. Specific assumptions regarding \( f \) will be made later. The main feature of \eqref{nonlinear} is the combination of nonlinearity and nonlocality. Considering \eqref{nonlinear} in a river environment, when \( \Lambda>0 \), the points \(x=0\) and \(x=1\) can be interpreted as the upstream boundary and downstream boundary, respectively. This means that \(x=0\) is the starting point of the river segment under consideration where the water flows in, and \(x=1\) is the endpoint where the water flows out.

	To our knowledge, age-structured models with random diffusion were first proposed by Gurtin \cite{gurtin1973system} and have since been extensively studied. Research closely related to our models \eqref{nonlinear} with/without advection includes works, for instance, by Busenberg and Iannelli \cite{busenberg198nonlinear}, Chan and Guo \cite{chan1990semigroups}, Di Blasio \cite{di1979non}, Guo and Chan \cite{guo1994semigroup}, Gurtin and MacCamy \cite{gurtin1981diffusion}, Hastings \cite{hastings1992age}, Huyer \cite{huyer1994semigroup}, Langlais \cite{langlais1985nonlinear,langlais1988large}, MacCamy \cite{maccamy1981population}, and Delgado \cite{delgado2006nonlinear,delgado2008nonlinear}. One may refer to the survey by Webb \cite{webb2008population}, which provides detailed results and references. The aforementioned works primarily focus on the well-posedness and long-term behavior of these models under specific assumptions, such as separable forms or spatially homogeneous parameters. More recent studies by Walker such as \cite{walker2008age,walker2009positive,walker2010global,walker2011bifurcation,walker2011positive,
		walker2013global,walker2022principle,walker2023stability} have further advanced the field; his research has concentrated on the existence of nontrivial equilibria in age-structured models similar to \eqref{nonlinear} and established the principle of linearized stability. In addition, we mention that if our model \eqref{nonlinear} is discretized to only two stages (juvenile-adult), then it is similar to a class of switching models which have two states converting to each other, see for example \cite{monmarche2025impacts,wang2025on}.
	
	Now, we make some assumptions on the nonlinear function $f=f(x, u)$ as follows.	
	\begin{assumption}\label{assump-f}
		{\rm The function $f$ satisfies the following conditions:
			\begin{itemize}
				
				\item [(i)] $f\in C^{2, 1}([0, 1]\times[0, \infty))$;
				
				\item [(ii)] $f_u(x, u)>0$ for all $u\in[0, \infty)$ and $x\in[0, 1]$;
				
				\item [(iii)] $f(x, 0)\equiv0$ and $\frac{f(x, u)}{u}$ is decreasing with respect to $u$ for all $x\in[0, 1]$;
				
				\item [(iv)] There exists a constant $L>0$ such that $f(x, u)\le L$ for all $u\in[0, \infty)$ and $x\in[0, 1]$.
			\end{itemize}
		} 
	\end{assumption}
	
	In age-structured models, nonlinearities often play a crucial role in capturing complex biological interactions. A typical example of such a nonlinearity in Assumption \ref{assump-f} is the function \( f(x, u) = \frac{u}{1+\tau u} \) for \( u \ge 0 \), where \( \tau > 0 \) is a constant. This is commonly known as Holling’s type II response and is frequently used in ecological modeling. Another example is the logistic function \( f(x, u) = 1 - e^{-u} \) for \( u \ge 0 \), which is widely used to model population growth due to its ability to represent self-limiting growth processes.
	Assumption \ref{assump-f}-(ii) are employed to avoid the existence of periodic solutions, which are common in age-structured models. This is supported by the works of Magal and Ruan \cite{magal2009center,magal2018theory} and Liu et al. \cite{liu2011hopf}, among others. Assumption \ref{assump-f}-(iii) introduces a sub-linear condition that ensures the uniqueness of the positive equilibrium, preventing multiple equilibria in the model. Lastly, Assumption \ref{assump-f}-(iv) implies that the growth rate of the population is bounded, which is crucial for guaranteeing that the solution to the model \eqref{nonlinear} remains uniformly bounded over time. These assumptions are essential for maintaining the mathematical tractability and biological realism of the models. Unless otherwise specified, Assumptions \ref{Ass} and \ref{assump-f} apply throughout the entire paper. We will indicate any additional assumptions when needed.

	This paper's primary contribution is establishing the asymptotic behaviors of the principal eigenvalue for the eigenvalue problem \eqref{linear} under large advection and small or large diffusion rates. Additionally, we provide a comprehensive understanding of the global dynamics of \eqref{nonlinear}, highlighting how diffusion and advection affect species' spatial distribution. The results for the principal eigenvalue are crucial in this analysis, particularly in exploring the asymptotic profiles of the positive equilibrium under conditions of large advection, small diffusion, or large diffusion. These findings are novel and intriguing compared to previous work. 
	
	We would like to mention another motivation for studying the asymptotic behavior of the principal eigenvalue of the eigenvalue problem \eqref{linear} under conditions of large advection and small or large diffusion rates. Observe that when \(\beta\) is chosen as \(\beta(a, x) \equiv \delta_{a_m}(a)\), where \(\delta_{a_m}\) is the Dirac delta function, the third condition in \eqref{linear} becomes 
	\[ 
	u(0, x) = u(a_m, x).
	\]
	Thus, \eqref{linear} transforms formally into the eigenvalue problem of a parabolic operator under a time-periodic case, with the period being \(a_m\), provided \(\mu(a, x) = \mu(a + a_m, x)\). For such time-periodic parabolic operators, Liu et al. \cite{LiuL2022,liu2021asymptotics, liu2021asymptoticsII} and the second-named author and Zhao \cite{peng2015effects}) investigated the limiting behavior of the principal eigenvalue under conditions of large advection and small diffusion. However, the new challenges in studying \eqref{linear} arises from the nonlocal growth condition (i.e., the third equation in \eqref{linear}). The difficulty lies in incorporating the effects of diffusion and advection terms from the first equation of \eqref{linear} into the nonlocal initial condition, which is the focus of this paper. Without the nonlocal condition, one typically perturbs the problem around the extreme points of \(\mu\) with respect to \(x\) and constructs suitable super-/sub-solutions. The error induced by small perturbations on \(\mu\) can then be compensated by the term \(\lambda\). However, in our problem, the new error terms introduced by perturbations on \(\beta\) cannot be directly absorbed into \(\lambda\). To address this issue posed by the nonlocal term, we construct novel generalized super-/sub-solutions that effectively capture the nonlocal effects. This approach marks a significant difference and innovation compared to previous works that did not involve nonlocal growth conditions. 
	
	An important tool for studying the local and global dynamics of \eqref{nonlinear}, particularly concerning species persistence and extinction, is the examination of the spectrum of the linearized operator at the trivial zero equilibrium. By analyzing the spectrum—specifically, the sign of the spectral bound or the principal eigenvalue, if it exists—we can gain insights into the long-term behavior of \eqref{nonlinear}. Therefore, we will apply the established qualitative results of \eqref{linear} to \eqref{nonlinear} to investigate its global dynamics. To explore the asymptotic profiles of the positive equilibrium of \eqref{nonlinear} with large advection, small diffusion, or large diffusion, we have to further develop techniques to address the challenges caused by the combination of nonlocal and nonlinear terms.
	
	Furthermore, we believe our results on the eigenvalue problem \eqref{linear} have potential applications in other nonlinear modeling problems. On the other hand, although this paper focuses on Neumann boundary conditions, our approach should also be applicable to other boundary conditions, such as Dirichlet and Robin, and is expected to yield similar results for both \eqref{linear} and \eqref{nonlinear}.
	
	In this work, we focus on a one-dimensional domain with an advection function of fixed sign (or non-degeneracy), meaning \(q > 0\) or \(q < 0\) in \eqref{linear}. This assumption is biologically reasonable, as the direction of river streams is consistently one-sided, flowing from upstream to downstream. Mathematically, a non-degenerate advection function \(q\) simplifies the task of deriving the limiting properties of the principal eigenvalue concerning diffusion and advection rates. For readers interested in more general, degenerate cases where \(q\) can change sign or even vanish in certain intervals, we refer to \cite{liu2021asymptotics, liu2021asymptoticsII}. However, given the focus on realistic biological models, we will not address these degenerate cases here.
	
	The rest of the paper is structured as follows. In Section \ref{MR}, we present our main results. Section \ref{Pre} discusses the existence of the principal eigenvalue for \eqref{linear}. We then define generalized super-/sub-solutions and provide an equivalent characterization of the sign of the principal eigenvalue, which is used to explore its limiting properties. In Section \ref{EA}, we examine the effects of diffusion and advection rates on the principal eigenvalue, establishing its asymptotic behaviors with respect to large advection, small or large diffusion. In Section \ref{application}, we apply these results to \eqref{nonlinear}, to investigate its global dynamics and study the asymptotic profiles of its positive equilibrium concerning diffusion and advection. In the appendix, we include two figures to illustrate the profiles of the generalized super-solution \(\overline{w}\) for large advection in the proof of Theorem \ref{Lalambda} and the generalized sub-solution \(\underline{w}\) for small diffusion in the proof of Theorem \ref{Dlambda}.

	\section{Main results}\label{MR}
	In this section, we will present the main findings of this paper, which include the limiting behaviors of the principal eigenvalue of \eqref{linear} and the global dynamics of \eqref{nonlinear} as well as the asymptotic profiles of the positive equilibrium, if it exists.
	
	\subsection{Results on the principal eigenvalue of \eqref{linear}}
	In this subsection, we focus primarily on the limiting behaviors of the principal eigenvalue \(\lambda_0\) of  \eqref{linear} as the rate \(d\) approaches zero or infinity, or as the advection rate \(\Lambda\) approaches infinity. Throughout this section, we will denote the principal eigenvalue as \(\lambda_0(d, \Lambda)\) to emphasize its dependence on \(d\) and \(\Lambda\). 
	\begin{theorem}[Large advection]\label{Lalambda} 
		Given $d>0$, the function $\La\mapsto\lambda_0(d, \La)$ is continuous on $\R$ and satisfies 
		\begin{equation}\label{la-1}
			\lambda_0(d, \La)\rightarrow\begin{cases}
				\alpha_1 &\text{ as } \La\rightarrow\infty,\\
				\alpha_0 &\text{ as } \La\rightarrow-\infty,
			\end{cases}
		\end{equation}
		where $\al_1$ and $\al_0$ are, respectively, uniquely determined by
		\begin{eqnarray}
			&&\int_0^{a_c}\beta(a, 1)e^{-\al_1a}e^{-\int_0^a\mu(s, 1)ds}da=1, \label{al_1}\\
			&&\int_0^{a_c}\beta(a, 0)e^{-\al_0a}e^{-\int_0^a\mu(s, 0)ds}da=1. \label{al_0}
		\end{eqnarray}
	\end{theorem}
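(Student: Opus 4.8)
\emph{Overall strategy.} The plan is to prove $\lambda_0(d,\La)\to\al_1$ as $\La\to\infty$ by squeezing: for each $\ep>0$ I will exhibit, for all large $\La$, a generalized super-solution of \eqref{linear} with the spectral parameter shifted to $\al_1+\ep$ (forcing $\lambda_0(d,\La)<\al_1+\ep$) and a generalized sub-solution with parameter $\al_1-\ep$ (forcing $\lambda_0(d,\La)>\al_1-\ep$), using the characterization of the sign of $\lambda_0-\sigma$ from Section~\ref{Pre} applied with $\mu$ replaced by $\mu+\sigma$. The limit $\La\to-\infty$ then follows from the reflection $x\mapsto 1-y$, which converts $\La q(x)u_x$ into $-\La\,q(1-y)u_y$ and evaluates the coefficients at $1-y$; applying the $\La\to\infty$ conclusion to the reflected problem produces exactly the Lotka value at $x=0$, namely $\al_0$. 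Continuity of $\La\mapsto\lambda_0(d,\La)$ is inherited from the continuous dependence of the principal eigenvalue of \eqref{linear} on its coefficients. Finally, $\al_1$ (and $\al_0$) is well defined since, by Assumption~\ref{Ass}-(ii), the left-hand side of \eqref{al_1} is continuous and strictly decreasing in $\al_1$, running from $+\infty$ to $0$. By Remark~\ref{remark-2.1} we work on $[0,a_+]\times[0,1]$ with $a_+\in[a_c,a_m)$.

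\emph{Mechanism and set-up.} Putting $d\partial_{xx}+\La q\partial_x$ in divergence form reveals the weight $\rho_\La(x)=\exp\left(\tfrac{\La}{d}\int_0^x q\right)$, which concentrates exponentially at $x=1$ as $\La\to\infty$; the spatial dynamics thus effectively freezes the coefficients at $x=1$, which is why $\al_1$ is the limit. Given $\ep>0$, I fix $\ep_1>0$ small enough that the roots $\al_1^{\pm\ep_1}$ of the equations obtained from \eqref{al_1} by replacing $\beta(a,1),\mu(s,1)$ with $\beta(a,1)\pm\ep_1,\mu(s,1)\mp\ep_1$ satisfy $\al_1-\ep<\al_1^{-\ep_1}<\al_1<\al_1^{+\ep_1}<\al_1+\ep$ (possible by the strict monotonicity above), and then, by the continuity of $\beta,\mu$ in $x$ (Assumption~\ref{Ass}), fix $\de>0$ with $\beta(a,x)\le\beta(a,1)+\ep_1$ and $|\mu(a,x)-\mu(a,1)|\le\ep_1/2$ on $[0,a_+]\times[1-\de,1]$.

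\emph{Lower bound.} Take the product $\underline w(a,x)=\phi_-(a)\,g_\La(x)$, where $\phi_->0$ solves $\phi_-'=-(\mu(a,1)+\ep_1+\al_1^{-\ep_1})\phi_-$ with $\phi_-(0)=\int_0^{a_c}(\beta(a,1)-\ep_1)\phi_-(a)\,da$ (so $\phi_->0$ by Assumption~\ref{Ass}-(ii)), and $g_\La>0$ is the principal eigenfunction of $dg''+\La qg'=\nu g$ on $(1-\de,1)$ with a Dirichlet condition at $1-\de$ and the Neumann condition at $1$, extended by $0$ to $[0,1-\de]$ (the resulting upward jump of $g_\La'$ at $x=1-\de$ being admissible for a sub-solution). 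The nonlocal inequality is trivial where $\underline w\equiv0$ and, on $(1-\de,1]$, follows from $\beta(a,x)\ge\beta(a,1)-\ep_1$ and the normalization of $\phi_-$; the differential inequality reduces to $\nu(\La,\de)\ge\mu(a,x)-\mu(a,1)-\ep_1$ on $(1-\de,1)$, whose right-hand side is $\le-\ep_1/2$, so it holds once $\La$ is large because $\nu(\La,\de)\to0^-$ as $\La\to\infty$ — a consequence of the exponential concentration of $\rho_\La$ at $x=1$, quantified with a test function supported in a fixed neighborhood of $x=1$. Hence $\lambda_0(d,\La)>\al_1^{-\ep_1}>\al_1-\ep$ for all large $\La$.

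\emph{Upper bound and the main difficulty.} This is the crux, and it is precisely here that the nonlocal condition obstructs a naive argument. A product super-solution $\psi(a)g(x)$ fails: $g$ cancels in $\overline w(0,x)\ge\int_0^{a_c}\beta(a,x)\overline w(a,x)\,da$, so one would need $\psi(0)\ge\int_0^{a_c}\beta(a,x)\psi(a)\,da$ for \emph{every} $x$, i.e.\ control by $\overline\beta$ rather than by $\beta(\cdot,1)$, giving only $\limsup\lambda_0\le\overline\al>\al_1$ in general — the ``$\beta$-error is not absorbable into $\lambda$'' obstruction noted in the Introduction. Instead I would construct a generalized super-solution assembled from two components: near $x=1$ it behaves like the frozen profile $\phi_+$ (with rates $\beta(\cdot,1)+\ep_1$, $\mu(\cdot,1)-\ep_1$ and parameter $\al_1^{+\ep_1}$) times a spatial cut-off concentrating at $x=1$ — chosen so that $d\overline w_{xx}+\La q\overline w_x$ becomes strongly negative away from $x=1$, overriding the bounded spatial oscillation of $\mu$, while remaining mild near $x=1$; and away from $x=1$ it is dominated by a term of the form $\phi_+(0)\,e^{-\kappa a}\,b_\La(x)$ with a large fixed decay rate $\kappa>\|\overline\beta\|_{L^\infty(0,a_c)}$, whose fast age-decay makes it \emph{self-subcritical} for the nonlocal condition there, since $\int_0^{a_c}\beta(a,x)e^{-\kappa a}\,da\le\|\overline\beta\|_{L^\infty}/\kappa<1$ no matter how large $\beta(\cdot,x)$ is. Near $x=1$ the first component governs and the nonlocal inequality closes via $\beta(a,x)\le\beta(a,1)+\ep_1$; on the rest of $(0,1)$ the first component is negligible and the second carries the estimate. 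It remains to patch the two components so that $\overline w>0$ throughout, the Neumann sign conditions $\overline w_x(a,0)\le0\le\overline w_x(a,1)$ and the super-solution jump conditions all hold, and the differential inequality is met in the transition zone; the resulting profile is the one drawn in the appendix. Granting this, $\lambda_0(d,\La)<\al_1^{+\ep_1}<\al_1+\ep$ for all large $\La$, and combining with the lower bound and sending $\ep\downarrow0$ yields $\lambda_0(d,\La)\to\al_1$. The expected main obstacle is exactly this last step: simultaneously meeting the differential, boundary, jump, and (above all) nonlocal constraints with a single coupled super-solution.
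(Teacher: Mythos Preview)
Your strategy is exactly the paper's: squeeze $\lambda_0$ between $\al_1\pm\ep$ via generalized super-/sub-solutions of the shifted problem, invoking Proposition~\ref{equivalent} and Corollary~\ref{corsub}, and handle $\La\to-\infty$ by reflection. Your sub-solution is a clean variant of the paper's: where the paper takes an explicit quadratic $\tilde M_1-(x-1)^2$ near $x=1$ glued to a monotone piece vanishing at $1-\de$, you use the Dirichlet--Neumann principal eigenfunction $g_\La$ on $(1-\de,1)$ and the fact that its eigenvalue $\nu(\La,\de)\to0^-$. Both work; yours is slightly more conceptual but requires the (easy) asymptotic for $\nu$. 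One small omission: in your set-up you only impose $\beta(a,x)\le\beta(a,1)+\ep_1$ on $[1-\de,1]$, but your nonlocal sub-solution inequality needs the companion lower bound $\beta(a,x)\ge\beta(a,1)-\ep_1$ there; add it when fixing $\de$.

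The genuine gap is the super-solution. You correctly diagnose the obstruction (a pure product $\psi(a)g(x)$ forces control by $\overline\beta$, not $\beta(\cdot,1)$) and correctly identify the remedy---an age-decaying piece $e^{-\kappa a}$ with $\kappa$ large enough to be ``self-subcritical'' for the nonlocal condition away from $x=1$---but you stop short of an actual construction, and you flag the patching as the expected obstacle. The paper carries this out with a three-zone ansatz: on $[1-\de,1]$ take $h_{\overline\de}(a)\,\overline z_1(x)$ with $h_{\overline\de}$ the age profile for $(\beta(\cdot,1)+\overline\de,\mu(\cdot,1))$ and $\overline z_1(x)=M_1+(x-1)^2$; on $[0,\de]$ take $e^{-\gamma a}h(a)\,\overline z_3(x)$ with $\overline z_3(x)=M_2(1-M_2x^2)$ and $\gamma$ chosen so that $\int_0^{a_c}\beta(a,x)e^{-\gamma a}h(a)\,da\le h(0)$ for every $x$ (your $\kappa$-trick); and on $[\de,1-\de]$ interpolate by a function $\overline w_2$ that is strictly decreasing in $x$, has bounded $\partial_{xx}$, satisfies $\partial_a\overline w_2\ge -(2\gamma+\al_1+\mu(a,1))\overline w_2$, and meets the nonlocal inequality there (this is where the fast age-decay is used). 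The constants $M_1,M_2$ are chosen so that the differential inequality holds in the two boundary layers uniformly in $\La$, and then a single largeness condition on $\La$ closes the middle zone via the term $-\La q\,\partial_x\overline w_2\ge \La q_{\min}\tilde c_4$. The jump conditions at $x=\de,1-\de$ are arranged by hand. This is precisely the ``simultaneous patching'' you anticipated; your outline has all the right ingredients, but the explicit choices of $\overline z_1,\overline z_3$ and the properties demanded of $\overline w_2$ are what make the verification go through.
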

	
	\begin{theorem}[Small diffusion with advection]\label{Dlambda} 
		Given $\La\in\R\setminus\{0\}$, the function $d\mapsto\lambda_0(d, \La)$ is continuous on $(0, \infty)$ and satisfies 
		\begin{equation}\label{small-d}
			\lambda_0(d, \La)\rightarrow\begin{cases}
				\alpha_1 &\text{ as } d\rightarrow 0, \ \,\text{ if $\La>0$, }\\
				\alpha_0 &\text{ as } d\rightarrow0,\ \, \text{ if $\La<0$},
			\end{cases}
		\end{equation}
		where $\al_1, \al_0$ are defined in \eqref{al_1} and \eqref{al_0} respectively.
	\end{theorem}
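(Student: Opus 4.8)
Here is how I would prove Theorem~\ref{Dlambda}. The plan is to mirror the proof of Theorem~\ref{Lalambda}, with the small parameter now $d$ instead of $\Lambda^{-1}$, and to reduce everything to the characterization of the sign of $\lambda_0(d,\Lambda)-\sigma$ by generalized super-/sub-solutions established in Section~\ref{Pre}.

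\emph{Reductions.} The continuity of $d\mapsto\lambda_0(d,\Lambda)$ on $(0,\infty)$ is obtained exactly as the continuity in $\Lambda$ from Theorem~\ref{Lalambda}: uniform parabolic estimates in $a$ together with the simplicity and isolation of $\lambda_0$ (Lemma~\ref{lem3.1}) force any limit of principal eigenpairs along $d_n\to d_\ast>0$ to be again the principal eigenpair at $d_\ast$. The reflection $x\mapsto 1-x$ turns the case $\Lambda<0$ into the case $\Lambda>0$ (it replaces $q(x)$ by $q(1-x)>0$, $\Lambda$ by $-\Lambda$, and interchanges $\alpha_0$ and $\alpha_1$), so it suffices to prove $\lambda_0(d,\Lambda)\to\alpha_1$ as $d\to0^+$ for $\Lambda>0$. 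Since $\{\lambda_0(d,\Lambda):0<d\le 1\}$ is bounded (by comparison with the spatially frozen envelope problems built from $\underline{\mu},\overline{\mu},\underline{\beta},\overline{\beta}$), it is enough to show $\alpha_1-C\epsilon\le\lambda_0(d,\Lambda)\le\alpha_1+C\epsilon$ for each $\epsilon>0$ and all small $d$.

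\emph{Lower bound.} For fixed $\epsilon>0$ I construct, for all small $d$, the positive generalized sub-solution $\underline w$ of \eqref{linear} with $\lambda=\alpha_1-\epsilon$ that is illustrated in the appendix. It is modeled on the strictly sub-critical age profile $a\mapsto e^{-(\alpha_1-\epsilon)a-\int_0^a\mu(s,1)\,ds}$ at $x=1$, propagated a short distance into $(0,1)$ along the characteristics of $\partial_a-\Lambda q(x)\partial_x$ (which emanate from $x=1$ when $\Lambda>0$), truncated to a one-sided neighborhood of $x=1$, and mollified. Because \eqref{al_1} holds with strict slack at $\lambda=\alpha_1-\epsilon$, there is room to absorb, uniformly for small $d$, the $O(d)$ term $d\underline w_{xx}$ produced by mollification, a thin boundary corrector making $\underline w_x(a,1)\le 0$ (the sign demanded of a sub-solution, and the wrong sign for the bare characteristic-transported profile), and the $x$-oscillation of $\mu$ over the shrinking support. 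The genuinely delicate requirement is the nonlocal inequality $\underline w(0,x)\le\int_0^{a_c}\beta(a,x)\underline w(a,x)\,da$: it is this, not the differential inequality, that dictates that $\underline w$ be concentrated near $x=1$ and slaved to the characteristics, since the interior birth rate $\beta$ cannot be compensated by the $\lambda$-term. The characterization then yields $\lambda_0(d,\Lambda)\ge\alpha_1-\epsilon$ for all small $d$.

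\emph{Upper bound.} One needs a positive generalized super-solution $\overline w$ of \eqref{linear} with $\lambda=\alpha_1+C\epsilon$. The subtlety is that the naive super-solutions (separable or spatially frozen, e.g.\ $\overline w(a,x)=e^{-(\alpha_1+\epsilon)a-\int_0^a\underline{\mu}(s)\,ds}$) only deliver the crude bound $\lambda_0\le\overline{\alpha}$ coming from the envelopes, not the refined bound $\lambda_0\le\alpha_1+C\epsilon$; so $\overline w$ must, like $\underline w$, encode the advective transport --- its age decay being governed by $\alpha_1+\mu(\cdot,1)$ near $x=1$ and carried into the interior along the characteristics --- while its $x$-part is (a mollification of) a steep exponential ramp $e^{\gamma(1-x)}$ with $\gamma$ fixed independently of $d$. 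Two features make this work uniformly for small $d$: for such a ramp $d\overline w_{xx}+\Lambda q\overline w_x$ keeps a definite sign favorable to the super-solution inequality, of size $\asymp\gamma\Lambda$, once $d\gamma<\Lambda\min_{[0,1]}q$ (the advection term dominating the diffusion term); and steepness of the ramp makes the interior contribution to the nonlocal inequality $\overline w(0,x)\ge\int_0^{a_c}\beta(a,x)\overline w(a,x)\,da$ negligible, so that this inequality is controlled near $x=1$ by the strict slack in \eqref{al_1} at $\lambda=\alpha_1+C\epsilon$. Since the frozen Lotka--Euler root $\alpha(\cdot)$ need not be regular in $x$ (Assumption~\ref{Ass}(iii) gives only $C^0$-dependence of $\mu$), $\overline w$ is built piecewise over a fine partition of $[0,1]$ with sign-correct interface jumps, together with standard boundary correctors at $x=0,1$ --- which is precisely where the \emph{generalized} notion of super-solution is needed. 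The characterization then gives $\lambda_0(d,\Lambda)\le\alpha_1+C\epsilon$ for all small $d$, and combining the two bounds and letting $\epsilon\downarrow 0$ finishes the proof (the case $\Lambda<0$ following by the reflection above).

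\emph{The main obstacle} is the nonlocal renewal condition: it must be enforced pointwise in $x$ for both test functions, it cannot be repaired by adjusting $\lambda$ the way $x$-perturbations of $\mu$ can in the local (no-renewal) setting, and it has to coexist with the singular limit $d\to0^+$, i.e.\ the boundary layer, and with the limited regularity of $\mu$ in $x$ --- which is exactly why the super- and sub-solutions have to be the transport-adapted, piecewise objects described above rather than the naive separable ones.
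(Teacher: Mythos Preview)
Your outline is essentially the paper's approach: reduce to $\Lambda>0$ by reflection, then sandwich $\lambda_0(d,\Lambda)$ between $\alpha_1\pm\epsilon$ by building explicit piecewise generalized super- and sub-solutions and invoking Proposition~\ref{equivalent} and Corollary~\ref{corsub}. The paper's constructions differ from your sketch only in detail---three fixed regions $[0,\delta]$, $[\delta,1-\delta]$, $[1-\delta,1]$ (not a fine partition; regularity of the frozen root $\alpha(\cdot)$ plays no role), with the age profile switching from $h_{\overline\delta}(a)$ near $x=1$ to $e^{-\gamma a}h(a)$ near $x=0$ (the latter, not steepness of the ramp, is what secures the nonlocal inequality away from $x=1$), and for the lower bound the sub-solution is positive on all of $[0,1]$ (using $e^{\hat\gamma a}h(a)$ near $x=0$) rather than truncated to a neighborhood of $x=1$---but the mechanism you describe is the same.
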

	
	\begin{theorem}[Small diffusion without advection]\label{Dlambda La=0} 
		%Assume that $\mu\in C^2([0, 1], L_{{\rm loc}, +}^\infty(0, a_m))$. 
		Given $\La=0$, the function $d\mapsto\lambda_0(d, 0)$ is continuous on $(0, \infty)$ and satisfies 
		\begin{equation}\nonumber%\label{small-d La=0}
			\lambda_0(d, 0)\rightarrow \alpha_{\max} \ \quad \text{ as } d\rightarrow0,
		\end{equation}
		where $\al_{\max}$ is uniquely determined by
		\begin{eqnarray}\label{almax}
			\max_{x\in[0, 1]}\int_0^{a_c}\beta(a, x)e^{-\al_{\max} a}e^{-\int_0^a\mu(s, x)ds}da=1. \label{al_max}
		\end{eqnarray}
	\end{theorem}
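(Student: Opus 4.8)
The plan is to combine the equivalent sign characterization of the principal eigenvalue from Section~\ref{Pre} (via generalized super-/sub-solutions) with two explicit constructions: a globally spread super-solution for the upper bound, and a sharply localized sub-solution for the lower bound. Throughout, by Remark~\ref{RK}-(ii) I work on $[0,a_+]\times[0,1]$ for a fixed $a_+\in[a_c,a_m)$, so that $\mu$ is bounded, and I write $R(\la,x):=\int_0^{a_c}\be(a,x)e^{-\la a}e^{-\int_0^a\mu(s,x)\,ds}\,da$ and $g(\la):=\max_{x\in[0,1]}R(\la,x)$. Using Assumption~\ref{Ass}-(ii) one checks that $g$ is continuous, strictly decreasing, with $g(\la)\to+\infty$ as $\la\to-\infty$ and $g(\la)\to0$ as $\la\to+\infty$, so \eqref{al_max} has a unique root $\al_{\max}$. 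Continuity of $d\mapsto\la_0(d,0)$ on $(0,\infty)$ follows from the continuous dependence of \eqref{linear} on $d$ exactly as for Theorems~\ref{Lalambda}--\ref{Dlambda}. It then remains to show: for every $\ep>0$, (a) for all small $d>0$ the problem \eqref{linear} with $\la=\al_{\max}+\ep$ admits a positive generalized super-solution, whence $\la_0(d,0)\le\al_{\max}+\ep$; and (b) for all small $d>0$ the problem \eqref{linear} with $\la=\al_{\max}-\ep$ admits a generalized sub-solution $\gneq0$, whence $\la_0(d,0)\ge\al_{\max}-\ep$. Letting $\ep\to0$ gives the theorem.

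For (a), first mollify $\mu$ in $x$ to obtain $\mu^+\ge\mu$ of class $C^2$ in $x$ with $\|\mu^+-\mu\|_{L^\infty}<\ep/4$; note $\al_{\max}^{\mu^+}\le\al_{\max}$ since increasing the mortality lowers the root of \eqref{al_max}. Put $\la=\al_{\max}+\ep$, $\sig=\ep/2$, $A_1=a_c\|\partial_x\mu^+\|_{L^\infty}$, $p(x)=e^{-A_1x+A_1x^2}$, and try
\[
\overline w(a,x)=p(x)\,e^{(\sig-\la)a}\,e^{-\int_0^a\mu^+(s,x)\,ds}.
\]
Since $(\log p)'(x)=-A_1+2A_1x\in[-A_1,A_1]$ and $|\partial_x\!\int_0^a\mu^+(s,\cdot)\,ds|\le A_1$, the boundary inequalities $\overline w_x(a,0)\le0\le\overline w_x(a,1)$ hold. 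A direct computation reduces the differential inequality for $\overline w$ to $\sig\ge d\,\mathcal B(a,x)+(\mu^+-\mu)(a,x)$, where $|\mathcal B|$ is bounded in terms of $\|\partial_x\mu^+\|_{L^\infty}$, $\|\partial_{xx}\mu^+\|_{L^\infty}$ and $a_c$ only; since $\mu^+-\mu<\ep/4<\sig$, this holds once $d$ is small. Finally $\overline w(0,x)=p(x)$ while $\int_0^{a_c}\be(a,x)\overline w(a,x)\,da=p(x)\,R^{\mu^+}(\la-\sig,x)$, and $\la-\sig=\al_{\max}+\ep/2>\al_{\max}\ge\al_{\max}^{\mu^+}$ forces $R^{\mu^+}(\la-\sig,\cdot)<1$, so the nonlocal super-solution inequality holds strictly. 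Hence $\overline w$ is the desired super-solution.

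For (b), pick $x^\ast\in[0,1]$ with $R(\al_{\max}-\ep,x^\ast)=g(\al_{\max}-\ep)>1$, set $\la=\al_{\max}-\ep$ and $3\de:=g(\la)-1>0$. Fix a profile $\psi\in W^{2,\infty}(\R)$ with $0\le\psi\le1$, $\psi\equiv1$ on $[-1,1]$, $\mathrm{supp}\,\psi\subseteq[-2,2]$, and $\psi''+C'\psi\ge0$ a.e.\ for a suitable constant $C'>0$ (achievable by arranging $\psi''<0$ only where $\psi\ge\frac12$). For small $\eta>0$ let $I_{2\eta}=(x^\ast-2\eta,x^\ast+2\eta)\cap[0,1]$, $\overline\mu_{2\eta}(s):=\max_{\overline I_{2\eta}}\mu(s,\cdot)$, $\zeta(x):=\psi((x-x^\ast)/\eta)$, and try
\[
\underline w(a,x)=\zeta(x)\,e^{-\la a}\,e^{-\int_0^a(\overline\mu_{2\eta}(s)+C'd/\eta^2)\,ds}.
\]
On $\mathrm{supp}\,\zeta\subseteq I_{2\eta}$ one has $\mu(a,x)\le\overline\mu_{2\eta}(a)$, so the differential inequality for $\underline w$ reduces to $\zeta''(x)+(C'/\eta^2)\zeta(x)\ge0$, i.e.\ $\eta^{-2}(\psi''+C'\psi)\ge0$, which holds; off $\mathrm{supp}\,\zeta$ it is trivial, and the Neumann inequalities hold since $\zeta$ is flat at / vanishes near the relevant endpoint(s). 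For the nonlocal inequality we need $\int_0^{a_c}\be(a,x)e^{-\la a}e^{-\int_0^a(\overline\mu_{2\eta}(s)+C'd/\eta^2)\,ds}\,da\ge1$ on $\mathrm{supp}\,\zeta$. Since $x\mapsto\mu(\cdot,x)$ and $x\mapsto\be(\cdot,x)$ are continuous into $L^\infty(0,a_c)$, $\overline\mu_{2\eta}\to\mu(\cdot,x^\ast)$ and $\be(\cdot,x)\to\be(\cdot,x^\ast)$ uniformly on $[0,a_c]$ as $\eta\to0$, uniformly in $x\in I_{2\eta}$, so $\int_0^{a_c}\be(a,x)e^{-\la a}e^{-\int_0^a\overline\mu_{2\eta}(s)\,ds}\,da\to R(\la,x^\ast)=g(\la)>1$. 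Thus first fix $\eta$ so that this integral is $\ge1+2\de$ on $I_{2\eta}$, then take $d$ so small that $e^{-C'da_c/\eta^2}\ge(1+\de)/(1+2\de)$; the nonlocal inequality then holds, strictly on $\{\zeta>0\}$. This yields the sub-solution.

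The main obstacle is construction (b): localizing near $x^\ast$ forces a bump of width $O(\eta)$, whose $x$-curvature creates a defect of order $d/\eta^2$ in the differential inequality, which is precisely why I enhance the mortality by the constant $C'd/\eta^2$; this enhancement is harmless only because it contributes $O(d\,a_c/\eta^2)\to0$ to the exponent, so the order of limits---first $\eta\to0$, then $d\to0$---is essential. This is the advection-free analogue of the boundary-layer localization used for Theorems~\ref{Lalambda} and~\ref{Dlambda}: the difference is that here the sub-solution concentrates at the interior maximizer of the local net-reproduction rate $R(\la,\cdot)$ rather than at an inflow boundary point, while the matching super-solution in (a) is instead globally spread, using the shift $e^{(\sig-\la)a}$ to create the slack absorbing both the $O(d)$ error from the diffusion term and the $O(\ep)$ error from smoothing $\mu$.
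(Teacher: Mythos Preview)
Your argument is correct and self-contained, but it follows a genuinely different route from the paper. For the upper bound, the paper does \emph{not} build an explicit super-solution with a weight $p(x)$; instead it performs a Liouville-type change of variables $w=u/h$ with $h(x)=e^{\varrho\psi(x)}$ (where $\psi$ is negative in $(0,1)$ and has outward-pointing normal derivative on $\{0,1\}$), so that the Neumann problem becomes a Robin problem whose boundary inequalities are automatically satisfied by the pointwise eigenfunction $r(a,x)=v(a,x)/h(x)$ of the purely age-structured problem at each $x$; the diffusive defect $dLr$ is then absorbed directly since $\|Lr\|_\infty$ is fixed. For the lower bound the contrast is sharper: rather than a localized bump sub-solution, the paper argues by contradiction via a blow-up, in the spirit of Hess. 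Assuming $\la_0(d_n,0)\le0$ along $d_n\to0$, one rescales $\mathbf x=d_n^{-1/2}(x-x_n)$ around a maximum point of the normalized eigenfunction and passes to a limit on $[0,a_c]\times\R$, obtaining a separable solution of the frozen-coefficient problem at some $\bar x$, which forces the frozen principal eigenvalue $\bar\la$ to be $\le0$ and contradicts the hypothesis $R(0,\bar x)>1$. What your construction buys is uniformity with the rest of Section~\ref{EA}: you never leave the super-/sub-solution framework of Proposition~\ref{equivalent} and Corollary~\ref{corsub}, and the order-of-limits mechanism ($\eta$ first, then $d$) is transparent. What the paper's route buys is that the lower bound requires no delicate profile engineering---no need to manufacture a $\psi$ with $\psi''+C'\psi\ge0$ or to track the $O(d/\eta^2)$ mortality enhancement---at the price of invoking parabolic compactness and a limiting problem on the whole line. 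One minor point: your $\psi\in W^{2,\infty}$ is only $C^{1,1}$, whereas Definition~\ref{super/sub} asks for $C^2$ off a finite set $\mathbb X$; this is harmless since a $C^2$ bump with $\psi''<0$ only on a set where $\psi$ is bounded below (e.g.\ the standard $C^\infty$ mollifier) does the job, but you should say so.
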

	%	\begin{remark}
		%		\rm In Theorem \ref{Dlambda La=0}, in the case of $\La=0$, the extra regularity conditions imposed on $\mu$ are used to ensure that the constructed super-solution belongs to the space \( W^{1, 1}((0, a_m), C^2([0, 1])) \). Such regularity was first utilized by Ducrot et al. \cite[Lemma A.11]{Ducrot2022Age-structuredI} to address the counterpart of nonlocal diffusion. {\color{red}In fact, one can approximate $\mu$ from above and below by $C^2$ functions in $x$, to obtain the results in Theorem 2.3. Thus this extra regularity condition is not strict.}%We suspect that such conditions should be merely technical.
		%	\end{remark}
	
	\begin{theorem}[Large diffusion]\label{average} 
		Given $\La\in\R$, we have
		\begin{equation}\nonumber
			\lambda_0(d, \La)=\bar\al\ \quad \text{ as }d\to\infty,
		\end{equation}
		where $\bar\al$ is uniquely determined by
		\begin{equation}\label{baral}
			\int_0^1\int_0^{a_c}\beta(a, x)e^{-\bar\al a}e^{-\int_0^a\int_0^1\mu(s, x)dxds}dadx=1.
		\end{equation}
	\end{theorem}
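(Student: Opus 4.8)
The plan is to recognise the $d\to\infty$ limit of \eqref{linear} as a spatially \emph{averaged} age‑structured ordinary eigenvalue problem whose principal eigenvalue is exactly $\bar\al$, and then to obtain the convergence by a weak compactness argument. First I would record that $\bar\al$ is well defined: writing $\bar\mu(a):=\int_0^1\mu(a,x)\,dx$ and $\bar\be(a):=\int_0^1\be(a,x)\,dx$, the function
\[
F(\la):=\int_0^1\int_0^{a_c}\be(a,x)e^{-\la a}e^{-\int_0^a\bar\mu(s)\,ds}\,da\,dx
\]
is continuous and strictly decreasing on $\R$ (by Assumption~\ref{Ass}-(ii), $\be$ is positive on a set of positive measure, and $e^{-\int_0^a\bar\mu}$ is finite and positive for $a\le a_c<a_m$), with $F(\la)\to\infty$ as $\la\to-\infty$ and $F(\la)\to0$ as $\la\to\infty$; hence \eqref{baral} has a unique root $\bar\al$. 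Solving the linear ODE $U'=-(\bar\mu(a)+\la)U$ explicitly then shows that $\bar\al$ is precisely the (unique) principal eigenvalue of the averaged problem
\[
U'(a)=-\bar\mu(a)U(a)-\la U(a)\ \text{ on }(0,a_c),\qquad U(0)=\int_0^{a_c}\bar\be(a)U(a)\,da,
\]
where Remark~\ref{remark-2.1} lets me restrict the age variable to $(0,a_c)$.

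Next I would fix uniform a priori bounds $\underline\al\le\la_0(d,\La)\le\overline\al$, valid for all $d>0$ and independent of $d$; these follow by comparing \eqref{linear} with the two spatially homogeneous problems obtained on replacing $(\mu,\be)$ by $(\overline\mu,\underline\be)$ and by $(\underline\mu,\overline\be)$, whose principal eigenfunctions are $x$‑independent so that the diffusion and advection terms drop out, using the sign characterisation of Section~\ref{Pre} (the constant function $V$ built from the homogeneous eigenpair is a positive generalized sub-, resp.\ super-, solution at the corresponding parameter). Let $(\la_d,u_d)$ denote the principal eigenpair with $u_d>0$, normalised by $\|u_d\|_{L^2((0,a_c)\times(0,1))}=1$. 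The key estimate is a weighted energy identity: with $m_d(x):=\exp\!\big(\tfrac{\La}{d}\int_0^x q(s)\,ds\big)$, which tends to $1$ uniformly on $[0,1]$, the operator $d\,\partial_{xx}+\La q\,\partial_x$ is self-adjoint in $L^2(m_d\,dx)$, and multiplying the first equation of \eqref{linear} by $m_d u_d$ and integrating over $x$ (the Neumann condition removing the boundary terms) gives
\[
\frac12\frac{d}{da}\int_0^1 m_d u_d^2\,dx=-d\int_0^1 m_d (u_d)_x^2\,dx-\int_0^1 m_d\big(\mu(a,x)+\la_d\big)u_d^2\,dx.
\]
Integrating over $a\in(0,a_c)$ and using $\mu\ge0$, the boundedness of $\mu$ on $(0,a_c)\times(0,1)$, $\tfrac12\le m_d\le2$ for large $d$, the uniform bound on $\la_d$, and $\|u_d(0,\cdot)\|_{L^2_x}\le\sqrt{a_c}\,\|\be\|_\infty$ (from the birth condition), I expect $d\int_0^{a_c}\int_0^1(u_d)_x^2\le C$ with $C$ independent of $d$; hence $\|(u_d)_x\|_{L^2}=O(d^{-1/2})$ and, by the Poincar\'e--Wirtinger inequality, $u_d-\bar u_d\to0$ in $L^2$, where $\bar u_d(a):=\int_0^1 u_d(a,x)\,dx$.

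Passing to the limit is then routine. Integrating the first equation of \eqref{linear} over $x$ shows that $\{\bar u_d\}$ is bounded in $H^1(0,a_c)$, so along a subsequence $d_n\to\infty$ one has $\la_{d_n}\to\la_*\in[\underline\al,\overline\al]$, $\bar u_{d_n}\to U_*$ in $H^1(0,a_c)\hookrightarrow C([0,a_c])$, and $u_{d_n}\to U_*$ in $L^2$, with $U_*=U_*(a)\ge0$ and $\|U_*\|_{L^2}=1$ (so $U_*\not\equiv0$, using the normalisation together with $u_d-\bar u_d\to0$). The advection term satisfies $\|\La\int_0^1 q(u_{d_n})_x\,dx\|_{L^2_a}\le|\La|\,\|q\|_\infty\|(u_{d_n})_x\|_{L^2}\to0$, while $\int_0^1\mu u_{d_n}\,dx\to\bar\mu U_*$ in $L^2_a$; letting $n\to\infty$ in the $x$-averaged equation gives $U_*'=-(\bar\mu+\la_*)U_*$ on $(0,a_c)$. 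Integrating the birth condition over $x$ and using the convergence of traces yields $U_*(0)=\int_0^{a_c}\bar\be(a)U_*(a)\,da$. Thus $(\la_*,U_*)$ is a principal eigenpair of the averaged problem of the first step, forcing $\la_*=\bar\al$; since the subsequence was arbitrary, $\la_0(d,\La)\to\bar\al$ as $d\to\infty$.

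The part I expect to be delicate is not the limit procedure itself but the reason a shortcut is not available: an $x$-independent trial function does not see the spatial averages $\bar\mu,\bar\be$ but rather the pointwise extremes $\underline\mu,\overline\be$, so one must genuinely exploit that the eigenfunction flattens at rate $1/d$ — and the weighted energy identity is precisely what turns ``$d$ large'' into that quantitative flatness and, crucially, makes the advection contribution disappear despite the plain Neumann condition (under which $\int_0^1 q u u_x\,dx$ does not vanish on its own). A secondary technical nuisance is that $\mu$ is only $L^\infty_{\rm loc}$ in $a$, which is why it is cleaner to pass to the limit in the weak/averaged formulation than to build a pointwise $O(1/d)$ corrector. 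An alternative consistent with the paper's super-/sub-solution methodology would replace the last two steps by generalized super-/sub-solutions of the form $\overline w=U^{\vep}(a)\big(1+\tfrac1d\zeta(a,x)\big)$, with $U^{\vep}$ the eigenfunction of the averaged ODE at $\bar\al\pm\vep$ and $\zeta(a,\cdot)$ solving the cell problem $\zeta_{xx}=\mu(a,\cdot)-\bar\mu(a)$ with Neumann data, the $\vep$-room absorbing the resulting lower-order errors; that route, however, needs either additional regularity of $\mu$ in $a$ or a mollification argument.
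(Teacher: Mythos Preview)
Your approach is essentially the same as the paper's: uniform a~priori bounds on $\la_0(d,\La)$ from the spatially homogeneous comparison, an energy estimate yielding $\|(u_d)_x\|_{L^2}=O(d^{-1/2})$, Poincar\'e--Wirtinger to show $u_d-\bar u_d\to0$, and then passage to the limit in the $x$-averaged equation together with the integrated birth condition. The only (minor) technical differences are that the paper uses an unweighted energy identity and handles the advection cross term by H\"older rather than your weight $m_d$, and that it solves the limiting ODE for $\bar u_d$ explicitly (with an $o(1)$ remainder) instead of invoking $H^1$ compactness; both routes lead to the same characteristic equation \eqref{baral}.
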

	
	We would like to mention that it is an interesting issue to explore the asymptotical behavior of $\lambda_0(d, \La)$ when both $d$ and $\La$ vary (see, e.g. \cite{chen2012effects, LiuL2024}). Indeed, one can easily improve the above theorem by showing that $\lambda_0(d, \La)=\bar\al$ as $d\to\infty$ and $\La^2/d\to0$; the other cases seem more involved and are left for future study.
	
	\subsection{Results on the nonlinear model \eqref{nonlinear}}\label{1.1}
	In this subsection, we focus on \eqref{nonlinear}. Let us first write down the problem that the equilibrium corresponding to \eqref{nonlinear} solves:
	\begin{equation}\label{logistic}
		\begin{cases}
			u_a=du_{xx}-\La u_x-\mu(a, x)u, &(a, x)\in(0, a_m)\times(0, 1),\\
			u(0, x)=f\left(x, \int_{0}^{a_m}\beta(a, x)u(a, x)da\right), &x\in(0, 1),\\
			du_x(a, x)-\La u(a, x)=0, &(a, x)\in(0, a_m)\times\{0, 1\}.
		\end{cases}
	\end{equation}
	The eigenvalue problem associated with \eqref{logistic}, which is obtained by linearizing \eqref{logistic} at zero, reads as
	\begin{equation}\label{eigenvalue}
		\begin{cases}
			u_a=du_{xx}-\La u_x-\mu(a, x)u-\la u, &(a, x)\in(0, a_m)\times(0, 1),\\
			u(0, x)=f_u(x, 0)\int_0^{a_m}\beta(a, x)u(a, x)da, &x\in(0, 1),\\
			du_x(a, x)-\La u(a, x)=0, &(a, x)\in(0, a_m)\times\{0, 1\}.
		\end{cases}
	\end{equation}
	In order to apply the results of the principal spectral theory in \eqref{linear} directly to problem \eqref{nonlinear}, we set
	\begin{equation}\nonumber%\label{La/d}
		v(a, x)=e^{-(\La/d)x}u(a, x),
	\end{equation}
	and thus \eqref{eigenvalue} is rewritten as
	\begin{equation}\label{eigenvalue-prob}
		\begin{cases}
			v_a=dv_{xx}+\La v_x-\mu(a, x)v-\la v, &(a, x)\in(0, a_m)\times(0, 1),\\
			v(0, x)=f_u(x, 0)\int_0^{a_m}\beta(a, x)v(a, x)da, &x\in(0, 1),\\
			v_x(a, x)=0, &(a, x)\in(0, a_m)\times\{0, 1\}.
		\end{cases}
	\end{equation}
	We still use $\la_0=\la_0(d, \La)$ to denote the principal eigenvalue of \eqref{eigenvalue-prob} and the corresponding principal eigenfunction is denoted by $v\in W^{1, 1}((0, a_m), W^{2, p}(0, 1))$ with $p>1$ (see Section \ref{Pre}). 
	
	Our first result concerns the global dynamics of \eqref{nonlinear}, and can be stated as follows.	
	
	\begin{theorem}\label{stability MR} If $\lambda_0>0$, then there exists a unique positive equilibrium $u^*$ which satisfies \eqref{logistic} and is globally stable in the sense that $u(t, a, x)\rightarrow u^*(a, x)$ in $C([0, a_+]\times[0, 1])$ for any $a_+\in[a_c, a_m)$ as $t\rightarrow\infty$, where $u(t, a, x)$ is a solution of \eqref{nonlinear} with initial data $u_0(a, x)\geq0$ and $u_0(a, x)\not\equiv0$ in $[0, a_m)\times[0, 1]$. Otherwise, if $\la_0\le0$, then zero is globally stable  in the sense that $u(t, a, x)\rightarrow0$ in $C([0, a_+]\times[0, 1])$ for any $a_+\in[a_c, a_m)$ as $t\rightarrow\infty$.
	\end{theorem}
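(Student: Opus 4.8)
The plan is to treat \eqref{nonlinear} as a monotone semiflow on the positive cone and to combine comparison arguments with the principal spectral theory already developed for \eqref{linear}--\eqref{eigenvalue-prob}. First I would work in the variables $v=e^{-(\Lambda/d)x}u$, which turn the Robin-type boundary condition into a homogeneous Neumann condition and put the spatial operator in divergence form; in these variables \eqref{nonlinear} reads $v_t+v_a=dv_{xx}+\Lambda v_x-\mu v$ with the nonlocal birth law $v(t,0,x)=e^{-(\Lambda/d)x}f\bigl(x,\int_0^{a_m}\beta(a,x)e^{(\Lambda/d)x}v\,da\bigr)$ and $v_x(t,a,0)=v_x(t,a,1)=0$. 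Using the semigroup framework for age-structured parabolic equations (integration along the characteristics $a=t+\text{const}$, together with Assumption \ref{Ass} and $f\in C^{2,1}$) I would record global existence, uniqueness, nonnegativity, and, thanks to $f\le L$, uniform boundedness of mild/strong solutions in $W^{1,1}((0,a_m),W^{2,p}(0,1))$. The key qualitative inputs are: (a) a comparison principle for sub/super-solutions, where Assumption \ref{assump-f}-(ii) ($f_u>0$) is exactly what makes the nonlocal birth condition order-preserving, so that \eqref{nonlinear} generates a strongly monotone semiflow; and (b) an irreducibility statement — for any nontrivial $u_0\ge0$ the solution is strictly positive on $[0,a_+]\times[0,1]$ for all large $t$ and every $a_+\in[a_c,a_m)$ — which follows by combining the spreading of positivity in $x$ by diffusion, the reproduction $u(t,0,x)=f(x,\int\beta u\,da)>0$ once $\int\beta u\,da>0$ (guaranteed by Assumption \ref{Ass}-(ii)), and transport of positivity to all ages along characteristics. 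Since $\beta\equiv0$ on $[a_c,a_m)\times[0,1]$, it suffices to prove convergence on $[0,a_+]\times[0,1]$; the values on $(a_c,a_m)$ are then recovered by integrating the (birth-free) equation along characteristics and converge accordingly.

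For the supercritical case $\lambda_0>0$ I would exhibit an ordered pair of stationary sub/super-solutions of \eqref{logistic}. As super-solution take $\overline u\equiv M$ (a large constant in the $v$-variable): the interior inequality holds since $-\mu M\le0$, and the birth inequality holds for $M\ge Le^{|\Lambda|/d}$ because $f\le L$. As sub-solution take $\underline u=\epsilon\psi^\eta$, where $\psi^\eta>0$ is the principal eigenfunction of the linear problem \eqref{eigenvalue-prob} with $f_u(x,0)$ replaced by $f_u(x,0)-\eta$; by continuity and monotonicity of the principal eigenvalue in the birth coefficient (which follows from the sign characterization of Section~\ref{Pre}) the associated eigenvalue $\lambda_0^\eta$ is still positive for $\eta$ small, so the interior inequality holds with strict sign, while the birth inequality $\epsilon(f_u(x,0)-\eta)\int\beta\psi^\eta\,da\le f(x,\epsilon\int\beta\psi^\eta\,da)$ holds once $\epsilon$ is small enough that $f(x,s)/s\ge f_u(x,0)-\eta$ on the relevant range of $s$ — possible since $f(x,s)/s\uparrow f_u(x,0)$ as $s\downarrow0$ uniformly in $x$. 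Monotone iteration between $\underline u$ and $\overline u$ then yields a positive equilibrium $u^*$. Uniqueness of $u^*$ follows from the strict sublinearity in Assumption \ref{assump-f}-(iii): if $u_1^*,u_2^*$ are positive equilibria, the sliding argument $\theta^*=\sup\{\theta>0:\theta u_1^*\le u_2^*\}$ together with the strong maximum principle and the strict decrease of $s\mapsto f(x,s)/s$ forces $\theta^*\ge1$, and symmetrically $u_1^*\equiv u_2^*$. For global stability, by comparison and monotonicity the trajectory issued from $\overline u=M$ is nonincreasing and the one issued from $\epsilon\psi^\eta$ is nondecreasing; each is bounded, hence converges in $C([0,a_+]\times[0,1])$ to an equilibrium, which by uniqueness equals $u^*$. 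Given any nontrivial $u_0\ge0$, the irreducibility step provides $t_0$ with $\epsilon\psi^\eta\le u(t_0,\cdot)\le M$ (after shrinking $\epsilon$ and enlarging $M$ on $[0,a_+]$), and squeezing $u(t,\cdot)$ between the two monotone trajectories gives $u(t,\cdot)\to u^*$.

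For the subcritical case $\lambda_0\le0$ I would first show \eqref{logistic} has no positive solution: if $u^*>0$ solved it, then (using $\int\beta u^*\,da>0$ by Assumption \ref{Ass}-(ii)) $u^*$ would be the positive, hence principal, eigenfunction, with eigenvalue $0$, of the linear problem of the type \eqref{eigenvalue-prob} with birth coefficient $g(x):=f(x,\int\beta u^*\,da)/\int\beta u^*\,da$; strict sublinearity gives $g(x)<f_u(x,0)$, so monotonicity of the principal eigenvalue in the birth rate forces $0<\lambda_0$, a contradiction. Next, $\overline u\equiv M$ (in the $v$-variable) is again a super-solution of \eqref{logistic} and, for $M$ large, dominates $u_0$; the trajectory issued from it is nonincreasing and converges to an equilibrium which, being nonnegative and not positive, must be $0$. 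Hence $u(t,\cdot)\to0$ in $C([0,a_+]\times[0,1])$ for every $u_0\ge0$.

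The \emph{main obstacle} I anticipate lies in the functional-analytic groundwork rather than in the comparison bookkeeping: establishing the irreducibility/strong positivity of the semiflow and, above all, the convergence of bounded monotone trajectories to equilibria uniformly on age ranges bounded away from $a_m$. One must handle the possible blow-up of $\mu$ at $a_m$ (hence the restriction $a_+<a_m$), the absence of parabolic smoothing in the age variable (compensated only by the reproduction law and transport along characteristics), and the compactness needed to pass to $\omega$-limit sets in $C([0,a_+]\times[0,1])$. Once this package and the monotone dependence of the principal eigenvalue on the birth coefficient are secured, the remaining steps — the comparison principle for the nonlocal boundary condition, the sub/super-solution construction, the monotone iteration, and the sliding-method uniqueness — are routine.
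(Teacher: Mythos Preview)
Your proposal is correct and follows essentially the same route as the paper. The paper's own proof is extremely terse: for $\lambda_0>0$ it simply refers to \cite[Section~4]{Ducrot2022Age-structuredII} for existence, uniqueness and global stability of $u^*$, and for $\lambda_0\le0$ it uses the super-solution $Le^{(\Lambda/d)x}$ (i.e.\ a constant in your $v$-variables), the monotone decrease of the trajectory issued from it, and the nonexistence of positive equilibria (again cited from \cite{Ducrot2022Age-structuredII}) to conclude convergence to $0$. Your write-up is exactly the content of those cited arguments spelled out in detail---the sub/super-solution pair $(\epsilon\psi^\eta,M)$, monotone iteration, the sliding-method uniqueness via strict sublinearity, and the eigenvalue-comparison proof of nonexistence when $\lambda_0\le0$---and the obstacles you flag (strong positivity of the semiflow, compactness of monotone trajectories on $[0,a_+]$) are precisely what the paper outsources to the reference.
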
 
	
	For later use, we make the following hypotheses:
	\begin{eqnarray}
		f_u(1, 0)\int_{0}^{a_c}\beta(a, 1)e^{-\int_0^a\mu(s, 1)ds}da>1,\label{alpha}\\
		\max_{x\in[0, 1]}f_u(x, 0)\int_0^{a_c}\beta(a, x)e^{-\int_0^a\mu(s, x)ds}da>1,\label{al max}\\
		\int_0^1\int_0^{a_c}f_u(x, 0)\beta(a, x)e^{-\int_0^a\int_0^1\mu(s, x)dxds}dadx>1.\label{al average}
	\end{eqnarray}
	Based on the results in Subsection 2.1, we can give some explicit conditions on the global dynamics of \eqref{nonlinear} when the diffusion rate $d$ is large or small, or advection rate $\La$ is large. 
	\begin{proposition}\label{asymptotic}{\it  If one of the following conditions holds:
			\begin{itemize}
				\item [\rm(i)] For fixed $d>0$, $\La>0$ is sufficiently large and \eqref{alpha} is fulfilled;
				
				\item [\rm(ii)] For fixed $\La>0$, $d>0$ is sufficiently small and \eqref{alpha}  is fulfilled;
				
				\item [\rm(iii)] For $\La=0$, $d>0$ is sufficiently small and \eqref{al max}  is fulfilled; %and additionally $\mu\in C^2([0, 1], L_+^\infty(0, a_c))$;
				
				\item [\rm(iv)] For fixed $\La\in\R$,  $d>0$ is sufficiently large and \eqref{al average}  is fulfilled,
			\end{itemize}
			then $\la_0>0$ and thus $u^*$ is globally stable for  \eqref{nonlinear}.
			
			Similarly, by reversing \eqref{alpha},  \eqref{al max} and \eqref{al average}, we can obtain similar results which ensure that zero is globally stable for \eqref{nonlinear}.
		}
	\end{proposition}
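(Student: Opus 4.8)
The plan is to reduce everything to the sign of the principal eigenvalue and then invoke Theorem~\ref{stability MR}. That theorem says that $\la_0>0$ forces the positive equilibrium $u^\ast$ of \eqref{nonlinear} to be globally stable, while $\la_0\le 0$ forces the trivial state to be globally stable. So it suffices to show that, under each of the conditions (i)--(iv), one has $\la_0=\la_0(d,\La)>0$ for the indicated parameter range, and that reversing \eqref{alpha}, \eqref{al max}, \eqref{al average} instead yields $\la_0\le 0$ there.

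The first step is to recognize \eqref{eigenvalue-prob} as an instance of \eqref{linear}: it is exactly the eigenvalue problem \eqref{linear} with the birth rate $\be(a,x)$ replaced by $\tilde\be(a,x):=f_u(x,0)\be(a,x)$ and $d,\La$ unchanged. By Assumption~\ref{assump-f}-(i),(ii) the map $x\mapsto f_u(x,0)$ is positive and of class $C^2$ on $[0,1]$ (the second $x$-derivative of $f_u$ is a $C^{2,1}$ derivative of $f$); hence $\tilde\be\in C^2([0,1],L^\infty_+(0,a_m))$, $\tilde\be\equiv 0$ on $[a_c,a_m)\times[0,1]$, and $\int_a^{a_c}\underline{\tilde\be}(l)\,dl\ge(\min_{[0,1]}f_u(\cdot,0))\int_a^{a_c}\underline\be(l)\,dl>0$ for $a\in[0,a_c)$. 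Thus $\tilde\be$ satisfies Assumption~\ref{Ass}, and Theorems~\ref{Lalambda}, \ref{Dlambda}, \ref{Dlambda La=0}, \ref{average} apply to \eqref{eigenvalue-prob} verbatim, with each $\be(a,x)$ in \eqref{al_1}, \eqref{al_0}, \eqref{almax}, \eqref{baral} replaced by $f_u(x,0)\be(a,x)$.

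The heart of the argument is then a short sign computation. In each of the four regimes the relevant theorem gives $\la_0(d,\La)\to\alpha^\ast$ (as $\La\to\infty$ in (i), $d\to 0$ in (ii)--(iii), $d\to\infty$ in (iv)), where $\alpha^\ast$ is the unique root of an equation $g(\alpha)=1$ whose left-hand side $g$ is, in every case, a continuous, strictly decreasing function of $\alpha\in\R$ with $g(\alpha)\to 0$ as $\alpha\to+\infty$ and $g(\alpha)\to+\infty$ as $\alpha\to-\infty$ — strict monotonicity because $\tilde\be\ge 0$ with $\int_0^{a_c}\underline{\tilde\be}>0$. (In case (iii), $g(\alpha)=\max_{x\in[0,1]}\int_0^{a_c}f_u(x,0)\be(a,x)e^{-\alpha a}e^{-\int_0^a\mu(s,x)ds}\,da$ is again continuous and strictly decreasing, being a finite maximum over the compact set $[0,1]$ of functions with those properties.) Consequently the sign of $\alpha^\ast$ is read off from $g(0)$: $\alpha^\ast>0$ if $g(0)>1$ and $\alpha^\ast<0$ if $g(0)<1$. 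Since $g(0)$ equals the left-hand side of \eqref{alpha} in cases (i)--(ii), of \eqref{al max} in case (iii), and of \eqref{al average} in case (iv), those hypotheses give $g(0)>1$, hence $\alpha^\ast>0$, hence $\la_0(d,\La)>0$ for $\La$ large, $d$ small, or $d$ large as appropriate; Theorem~\ref{stability MR} then delivers the global stability of $u^\ast$. Reversing the three inequalities makes $g(0)<1$, hence $\alpha^\ast<0$, hence $\la_0(d,\La)<0\le 0$ in the corresponding range, and Theorem~\ref{stability MR} yields global stability of zero.

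There is no serious analytic obstacle here — the delicate estimates all live in Theorems~\ref{Lalambda}--\ref{average} and in Theorem~\ref{stability MR}. The only points that need care are (a) verifying, as above, that multiplying $\be$ by $f_u(\cdot,0)$ preserves Assumption~\ref{Ass}, so the principal-eigenvalue asymptotics genuinely transfer to \eqref{eigenvalue-prob}; and (b) the elementary monotonicity argument that pins the sign of the limiting value $\alpha^\ast$ to the threshold integrals \eqref{alpha}, \eqref{al max}, \eqref{al average}, including the observation in case (iii) that a finite maximum of continuous strictly decreasing functions is again continuous and strictly decreasing, so that $\alpha_{\max}$ is well defined and its sign follows from \eqref{al max}.
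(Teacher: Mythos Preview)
Your proposal is correct and follows essentially the same approach as the paper's proof: both reduce the question to the sign of the limiting principal eigenvalue via Theorems~\ref{Lalambda}--\ref{average} and then invoke Theorem~\ref{stability MR}. You are simply more explicit than the paper about two points it leaves implicit, namely that \eqref{eigenvalue-prob} is \eqref{linear} with $\be$ replaced by $f_u(\cdot,0)\be$ (so Assumption~\ref{Ass} must be checked for this modified birth rate), and the elementary monotonicity argument linking the sign of $\alpha^\ast$ to the threshold integrals \eqref{alpha}, \eqref{al max}, \eqref{al average}.
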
  
	
	This proposition suggests that the dynamics of \eqref{nonlinear} are primarily determined by the behavior of growth, birth, and death rates under different conditions:
	(1) With Advection: for large advection or small diffusion, the dynamics are mainly influenced by these rates at the downstream boundary.
	(2) Without Advection: when the diffusion rate is sufficiently small, the dynamics are primarily determined by these rates at the point of maximum value across the entire habitat.
	(3) Regardless of Advection: when the diffusion rate is large enough, the dynamics are governed by the average behavior of the growth, birth, and death rates throughout the entire habitat.
	In summary, the impact of growth, birth, and death rates on the system's dynamics varies depending on the presence of advection and the magnitude of the diffusion rate.
	
	In what follows, we aim to explore the asymptotic behavior of the positive equilibrium \( u^* \) in relation to large advection \( \Lambda \) or small diffusion \( d \), with \( \Lambda \ge 0 \). Our findings are divided into two parts: one addressing the scenario with advection (Theorem \ref{AP LA MR}) and the other without advection (Theorem \ref{AP LA WA}). To emphasize the dependence of \( u^* \) on \( d \)  and \( \Lambda \), we will denote it as \( u^*_{\Lambda,\, d} \) .
	
	\begin{theorem}\label{AP LA MR}  Assume that \eqref{alpha} holds. Then for any $a\in[0, a_m)$, the following assertions hold.
		\begin{itemize}
			\item [\rm(i)] For fixed $d>0$, $\int_0^1 u^*_{\Lambda,\, d}(a,x)dx\to0$ as $\La\to\infty$;
			
			\item [\rm(ii)] For fixed $\La>0$, $\int_0^1 u^*_{\Lambda,\, d}(a,x)dx\to0$ as $d\to0$.
		\end{itemize}
	\end{theorem}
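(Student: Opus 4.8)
The plan is to use the principal eigenfunction/adjoint structure together with the asymptotics of $\lambda_0(d,\Lambda)$ from Theorems~\ref{Lalambda} and~\ref{Dlambda} to squeeze the equilibrium down. First I would recast everything in the $v$-variable via $v(a,x)=e^{-(\Lambda/d)x}u(a,x)$, so that the linearized problem becomes the self-adjoint-in-$x$ problem \eqref{eigenvalue-prob} with principal eigenvalue $\lambda_0=\lambda_0(d,\Lambda)$ and positive eigenfunction $\phi_{\Lambda,d}$. Writing $u^*_{\Lambda,d}$ in the same way as $w^*_{\Lambda,d}(a,x)=e^{-(\Lambda/d)x}u^*_{\Lambda,d}(a,x)$, the equilibrium equation \eqref{logistic} transforms into
\begin{equation}\nonumber
\begin{cases}
(w^*)_a=d(w^*)_{xx}+\Lambda(w^*)_x-\mu(a,x)w^*,&(a,x)\in(0,a_m)\times(0,1),\\
w^*(0,x)=e^{-(\Lambda/d)x}f\!\left(x,\int_0^{a_m}\beta(a,x)e^{(\Lambda/d)x}w^*(a,x)da\right),&x\in(0,1),\\
(w^*)_x(a,x)=0,&(a,x)\in(0,a_m)\times\{0,1\}.
\end{cases}
\end{equation}
The idea is that the nonlocal renewal condition forces $w^*$ (hence $u^*$) to behave like a multiple of the principal eigenfunction, and that multiple must collapse because the ``effective growth'' condition determining $\lambda_0$ no longer leaves room for a nontrivial positive equilibrium in the limit.

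Next I would implement the squeeze. For the upper bound, I use the sublinearity in Assumption~\ref{assump-f}-(iii): since $f(x,s)\le f_u(x,0)s$, the equilibrium $u^*_{\Lambda,d}$ is a sub-solution of the \emph{linear} problem \eqref{eigenvalue} with the same $d,\Lambda$, so by comparison $u^*_{\Lambda,d}\le C\,\psi_{\Lambda,d}$ for some constant $C=C(d,\Lambda)$ where $\psi_{\Lambda,d}$ is a positive solution of an associated linear age-structured equation; more precisely, pairing $w^*$ against the (positive) adjoint eigenfunction $\phi^*_{\Lambda,d}$ of \eqref{eigenvalue-prob} and integrating over $(0,a_m)\times(0,1)$ yields, after the renewal identity, a scalar relation of the form
\begin{equation}\nonumber
\int_0^1\!\!\int_0^{a_m}\phi^*_{\Lambda,d}\,w^*\, \bigl(\text{weight}\bigr)\,da\,dx \;=\; \int_0^1 \phi^*_{\Lambda,d}(0,x)\,e^{-(\Lambda/d)x} f\!\Bigl(x,\cdot\Bigr)\,dx,
\end{equation}
and since $f(x,s)\le f_u(x,0)s$ while the corresponding linear pairing equals $e^{-\lambda_0 a_m}$ times itself (the defining relation of $\lambda_0$), one gets that the total mass $\int_0^1\int_0^{a_m} u^*_{\Lambda,d}\,da\,dx$ is controlled by a quantity that tends to $0$ precisely when the gap between the linear-growth threshold and the actual eigenvalue closes. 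This is where Theorems~\ref{Lalambda} and~\ref{Dlambda} enter: as $\Lambda\to\infty$ (resp.\ $d\to0$) we have $\lambda_0(d,\Lambda)\to\alpha_1$, and $\alpha_1$ is characterized purely by the data at $x=1$; meanwhile the nonlocal boundary mass concentrates near $x=1$ where, by sublinearity, $f(1,s)/s<f_u(1,0)$ strictly for $s>0$. A Harnack-type estimate (parabolic Harnack in $x$ for each fixed $a$, propagated through the age variable using Assumption~\ref{Ass}-(ii) exactly as in Lemma~\ref{lem3.1}'s proof) shows the equilibrium cannot stay bounded away from $0$ in an averaged sense without violating this strict inequality in the limit. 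Hence $\int_0^1 u^*_{\Lambda,d}(a,x)\,dx\to0$ for each $a$.

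To organize the two cases uniformly: in case (i), fix $d$ and let $\Lambda\to\infty$; in case (ii), fix $\Lambda>0$ and let $d\to0$. In both regimes the normalized function $\tilde w_{\Lambda,d}=w^*_{\Lambda,d}/\|w^*_{\Lambda,d}\|$ converges (along subsequences, by the a priori estimates from Section~\ref{Pre} giving compactness in $C([0,a_+]\times[0,1])$ for $a_+\in[a_c,a_m)$) to a nonnegative solution of the limiting \emph{linear} problem with eigenvalue $\alpha_1$; uniqueness of the principal eigenfunction identifies the limit, and then feeding this back into the scaled nonlinear renewal condition — where the factor $e^{(\Lambda/d)x}$ inside $f$ blows up unless $w^*$ is exponentially small away from $x=1$ — forces $\|w^*_{\Lambda,d}\|\to0$, equivalently (undoing the exponential change of variables and using dominated convergence on $x\in(0,1)$) $\int_0^1 u^*_{\Lambda,d}(a,x)\,dx\to0$. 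I expect the main obstacle to be the interplay between the exponential weight $e^{\pm(\Lambda/d)x}$ and the nonlocal nonlinearity: one must show the integral $\int_0^{a_m}\beta(a,x)e^{(\Lambda/d)x}w^*(a,x)\,da$ stays in a range where the strict sublinearity of $f$ is quantitatively usable, rather than degenerating, and this requires the kind of carefully tailored super-/sub-solution with a boundary layer near $x=1$ that the introduction advertises and that was already built for Theorem~\ref{Dlambda}; reusing that construction (with $w^*$ in place of the eigenfunction and the linear bound $f_u(x,0)$ in place of the sublinear $f$) is the technical heart of the argument.
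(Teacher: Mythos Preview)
Your proposal is not a proof; it is a sketch of a strategy whose key steps are left unverified, and the overall route is quite different from the paper's. The adjoint-pairing relation you write down is never made precise, and the assertion that ``the total mass is controlled by a quantity that tends to $0$ precisely when the gap between the linear-growth threshold and the actual eigenvalue closes'' is not justified: under \eqref{alpha} one has $\lambda_0(d,\Lambda)\to\alpha_1>0$, so no such gap closes. The compactness you invoke (``a priori estimates from Section~\ref{Pre} giving compactness in $C([0,a_+]\times[0,1])$'') is not available there, and uniform-in-$\Lambda$ (or as $d\to0$) compactness of the equilibria is exactly the delicate point; likewise the ``Harnack-type estimate propagated through the age variable'' is asserted, not proved. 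You yourself flag the real obstacle---controlling $\int_0^{a_m}\beta(a,x)e^{(\Lambda/d)x}w^*(a,x)\,da$ so that strict sublinearity of $f$ bites---and then defer it to ``reusing'' the barrier from Theorem~\ref{Dlambda}, but that barrier was built for the \emph{linear} eigenvalue problem and does not transparently control the nonlinear equilibrium.

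The paper's argument is both different and far more elementary, and it hinges on an assumption you never use: the uniform bound $f(x,u)\le L$ in Assumption~\ref{assump-f}-(iv). From this one gets immediately $u^*(0,x)\le L$, hence $\int_0^1 u^*(a,x)\,dx\le L$ for all $a$ (by integrating the first equation of \eqref{logistic} over $x$ and noting the result is nonincreasing in $a$). Integrating the equation over $(0,a)\times(0,x)$ and using the no-flux boundary condition then yields an explicit pointwise bound of the form
\[
\int_0^a u^*(s,x)\,ds \;\le\; C_1\, e^{-\frac{\Lambda}{d}(1-x)} \;+\; \frac{L}{\Lambda}\bigl(1-e^{-\frac{\Lambda}{d}(1-x)}\bigr),
\]
with $C_1$ of order $\Lambda/d$ (Lemma~\ref{estimate of u^*}); a variant with an extra exponential weight handles the small-$d$ case (Lemma~\ref{estimate of u^*-2}). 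These estimates give $\int_0^a u^*(s,\cdot)\,ds\to0$ locally uniformly on $[0,1)$, whence $u^*(0,x)\to0$ locally uniformly via the renewal condition; dominated convergence (again using $u^*(0,\cdot)\le L$) gives $\int_0^1 u^*(0,x)\,dx\to0$, and the monotonicity in $a$ of $\int_0^1 u^*(a,x)\,dx$ finishes. No eigenvalue asymptotics, no adjoint pairing, no compactness, and no barriers are needed; the entire mechanism is the crude bound $f\le L$, which prevents any boundary-layer concentration. If you want to salvage your approach, you would at minimum need genuine compactness of $\{u^*_{\Lambda,d}\}$ in the singular limits and a rigorous identification of the limit problem---neither is supplied here.
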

	
	This theorem suggests that, regardless of the age stage, the species will become extinct throughout the entire habitat if the advection rate is sufficiently large or if the diffusion rate is sufficiently small in the presence of advection. The key point is that the growth rate $f=f(x, u)$ is bounded in $[0, 1]\times[0, \infty)$, which avoids the occurrence of concentration behavior.
	
	Next, in the absence of advection (\(\Lambda = 0\)), we aim to investigate the asymptotic behavior of the equilibrium \(  u^*_{0,\, d} \) when diffusion is small. We begin by defining a threshold function: 
	\begin{eqnarray}\label{Q}
		\Gamma(x):=f_u(x, 0)\int_0^{a_c}\beta(a, x)e^{-\int_0^a\mu(s, x)ds}da, \quad x\in[0, 1].
	\end{eqnarray}
	
	\begin{theorem}\label{AP LA WA} %Assume that $\mu\in C^2([0, 1], L_+^\infty(0, a_+))$ for any $a_+\in[a_c, a_m)$ and $f\in C^2([0, 1]\times[0, \infty))$.
		For any compact subset $I\subset (0, 1)$, if $\Gamma(x)>1$ for all $x\in I$, then as $d\rightarrow0$, we have 
		\begin{eqnarray}\nonumber
			u^*_{0,\, d}(a, x)\to
			v^*(a, x),\ \ \,\forall (a, x)\in[0, a_+]\times I,
		\end{eqnarray} 
		where $v^*(\cdot, x)\in W^{1, 1}(0, a_+)$ for any $x\in I$, is the unique positive solution to the problem:
		\begin{equation}\label{solutionofQ MR}
			\begin{cases}
				v_a(a, x)=-\mu(a, x)v(a, x),\; a\in(0, a_+),\\
				v(0, x)=f\left(x, \int_{0}^{a_+}\beta(a, x)v(a, x)da\right).
			\end{cases}
		\end{equation} 
		Otherwise, if $\Ga(x)\le1$ for all $x\in I$, then as $d\rightarrow0$,
		\begin{eqnarray}\nonumber
			u^*_{0,\, d}(a, x)\to
			0,\ \ \,\forall (a, x)\in[0, a_+]\times I.
		\end{eqnarray} 
	\end{theorem}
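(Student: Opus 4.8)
The plan is to combine the asymptotic characterization of the principal eigenvalue in Theorem \ref{Dlambda La=0} (localized version) with a super-/sub-solution argument for the nonlinear equilibrium problem \eqref{logistic}, exploiting the monotone structure guaranteed by Assumption \ref{assump-f}. First I would fix a compact $I\subset(0,1)$ on which $\Gamma(x)>1$, and a slightly larger compact $I'$ with $I\subset\mathrm{int}(I')\subset I'\subset(0,1)$ on which the same strict inequality still holds (by continuity of $\Gamma$). On $I'$ consider the auxiliary eigenvalue problem with Dirichlet conditions at $\partial I'$; by a localization/monotonicity argument analogous to Theorem \ref{Dlambda La=0}, its principal eigenvalue $\lambda_0^{I',d}$ converges as $d\to 0$ to the value $\alpha$ determined by $\max_{x\in I'}f_u(x,0)\int_0^{a_c}\beta(a,x)e^{-\int_0^a\mu(s,x)ds}da=1$, which is positive precisely because $\Gamma>1$ on $I'$. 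Hence for all small $d$ the local principal eigenvalue is positive, and $u^*_{0,d}$ stays bounded below by a positive constant on $I$ (uniformly in small $d$) via the sub-solution built from the corresponding positive eigenfunction.

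Next I would obtain the limiting profile. The candidate limit $v^*(\cdot,x)$ solves the pure ODE-in-$a$ problem \eqref{solutionofQ MR}: for each fixed $x$, integrating $v_a=-\mu(a,x)v$ gives $v(a,x)=v(0,x)e^{-\int_0^a\mu(s,x)ds}$, and plugging into the renewal condition reduces existence/uniqueness of a positive $v(0,x)$ to solving $v(0,x)=f\bigl(x,\,v(0,x)\int_0^{a_+}\beta(a,x)e^{-\int_0^a\mu(s,x)ds}da\bigr)$; by Assumption \ref{assump-f}-(ii)(iii) the map $s\mapsto f(x,s\,\Xi(x))/s$ is strictly decreasing with supercritical slope at $0$ (since $\Gamma(x)>1$) and bounded above, so a unique positive fixed point exists and depends continuously on $x$. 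To pass to the limit I would use the standard machinery: rewrite \eqref{logistic} (with $\Lambda=0$) along characteristics as $u^*_{0,d}(a,x)=u^*_{0,d}(0,x)e^{-\int_0^a\mu(s,x)ds}+d\int_0^a e^{-\int_\sigma^a\mu}\,\partial_{xx}u^*_{0,d}(\sigma,x)\,d\sigma$, and then establish interior $W^{2,p}$-in-$x$ bounds on $u^*_{0,d}$ that are uniform on $I$ for small $d$ (using the uniform $L^\infty$ bound from Assumption \ref{assump-f}-(iv) and parabolic/elliptic interior estimates applied to the $a$-family), so the $d$-dependent term is $O(d)$ and vanishes. Compactness then yields a locally uniform limit that solves \eqref{solutionofQ MR} with $v(0,x)\ge$ the positive lower bound from Step 1; uniqueness of the positive solution of \eqref{solutionofQ MR} forces the full convergence $u^*_{0,d}\to v^*$ on $[0,a_+]\times I$. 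The subcritical case $\Gamma(x)\le 1$ on $I$ is handled symmetrically: a super-solution of the form $\varepsilon\phi_d$, with $\phi_d$ the local principal eigenfunction on a slightly smaller neighborhood where $\Gamma<1+\delta$, shows $u^*_{0,d}\to 0$ uniformly on $I$ because the relevant local eigenvalue is $\le 0$ in the limit.

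The main obstacle I anticipate is the localization of the eigenvalue asymptotics together with the uniform interior regularity: unlike the global statement in Theorem \ref{Dlambda La=0}, here I need the convergence $\lambda_0^{I',d}\to\alpha$ and, more delicately, lower bounds on $u^*_{0,d}$ and $x$-regularity estimates that do not degenerate as $d\to 0$ and hold only on the compact subset $I$ (the behavior of $u^*_{0,d}$ near $x=0,1$, where $\Gamma$ may drop below $1$, is irrelevant but must be quarantined). This requires carefully constructed cutoff sub-/super-solutions of \eqref{logistic} that are compactly supported in $x$ and absorb the boundary error into the strict inequality $\Gamma>1$ on $I'$, mirroring the "generalized super-/sub-solution" technique the paper emphasizes for handling the nonlocal renewal term. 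Once these localized ingredients are in place, the nonlinear comparison principle and the uniqueness of the positive solution to \eqref{solutionofQ MR} close the argument routinely.
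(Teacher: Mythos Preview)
Your lower-bound strategy (localize to a Dirichlet problem on $I'\supset I$, use the corresponding eigenfunction as a sub-solution) is essentially what the paper does in Propositions \ref{Homo Dirichlet} and \ref{General}, so that part is on the right track. The serious gap is in your upper-bound/convergence step. You write the Duhamel identity
\[
u^*_{0,d}(a,x)=u^*_{0,d}(0,x)e^{-\int_0^a\mu}+d\int_0^a e^{-\int_\sigma^a\mu}\,\partial_{xx}u^*_{0,d}(\sigma,x)\,d\sigma
\]
and claim the last term is $O(d)$ via ``interior $W^{2,p}$-in-$x$ bounds uniform on $I$ for small $d$''. Such bounds are not available here: the parabolic operator $\partial_a-d\partial_{xx}$ is degenerating as $d\to0$, and standard interior estimates control $d\,\|\partial_{xx}u\|$, not $\|\partial_{xx}u\|$. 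Equivalently, after the natural rescaling $y=x/\sqrt d$ the second $x$-derivative of $u^*_{0,d}$ is $O(1/d)$, so your remainder is merely $O(1)$ and does not vanish. Indeed one \emph{expects} $\partial_{xx}u^*_{0,d}$ to blow up near points where $\Gamma$ crosses $1$ (transition layers), so no uniform-in-$d$ interior regularity of this strength can hold. Your compactness route therefore does not close.

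The paper bypasses regularity entirely and obtains the upper bound by comparison: after a change of unknown $w=u/h$ with $h(x)=e^{\varrho\psi(x)}$ (chosen to make the transformed Robin boundary data cooperate), one checks that $(1+\delta)v^*/h$ is a strict super-solution of the transformed nonlinear problem for all small $d$, because $\|dL\,r^*\|_\infty\to0$ and Assumption \ref{assump-f}(iii) gives $(1+\delta)f(x,s)>f(x,(1+\delta)s)$; this yields $\limsup_{d\to0}u^*_{0,d}\le(1+\delta)v^*$ directly (Proposition \ref{auto}). To pass from ``$\Gamma>1$ on $[0,1]$'' to ``$\Gamma>1$ only on $I$'', the paper does \emph{not} quarantine the bad region by cutoffs as you suggest; instead it enlarges $\beta$ off $I$ to $\beta_\gamma=\beta+\gamma\eta$ so that $\Gamma_\gamma>1$ everywhere, applies the global super-solution, and uses $u^*_{d}\le u^*_{d,\gamma}$ and $v^*_\gamma=v^*$ on $I$. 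Your treatment of the subcritical case $\Gamma\le1$ is likewise too sketchy: $\varepsilon\phi_d$ with $\phi_d$ a local eigenfunction is not a super-solution of the nonlinear problem on $[0,1]$ (it vanishes outside the local patch while $u^*_{0,d}$ does not), and a local eigenvalue being nonpositive does not by itself force $u^*_{0,d}\to0$ on $I$. The paper again modifies $\beta$ (now to $\beta_\eta(a,x)=\beta(a,x)e^{[(1+\eta)|\alpha(x)|+\eta]a}$) to make $\Gamma_\eta>1$ globally, compares, and lets $\eta\to0$ so that the limiting $v^*_0$ vanishes on $I$. If you want to rescue your outline, replace the regularity/compactness step by an explicit super-solution built from $v^*$, and handle the localization by monotone modification of $\beta$ rather than by cutoffs.
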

	
	Here note that one can enhance the regularity of $f$ and $\mu$ by approximation of $C^2$ functions in $x$ to ensure that the solution of \eqref{solutionofQ MR} belongs to $C^2([0, 1], W^{1, 1}(0, a_+))$, see Ducrot et al. \cite[Section 5]{Ducrot2022Age-structuredII} for more details. Moreover, the proof of Theorem \ref{AP LA WA} is based on an extension of the theory of singular perturbation developed by L{\'o}pez-G{\'o}mez and his coauthors (see \cite{cano2024singular,fernandez2019singular,lopez2013linear}), to overcome the new challenges arising from the nonlocal growth condition. 
	
	Theorem \ref{AP LA MR} and Theorem \ref{AP LA WA} reveal some critical insights into how the positive equilibrium of a species behaves as diffusion becomes small.
	Specifically, in the absence of advection, according to Theorem \ref{AP LA WA}, the positive equilibrium will converge uniformly to a positive function in any compact subset of $(0, 1)$ where $\Gamma(x)>1$, which represents that the species can persist.
	This pattern emphasizes the role of diffusion in maintaining species in specific regions. In stark contrast, when advection is present, as detailed in Theorem \ref{AP LA MR}, the dynamics change significantly. The presence of advection can lead to species extinction for small diffusion rates. This difference underscores how advection can disrupt the spatial distribution that diffusion alone would maintain, often leading to a more uniform or even depleted distribution across the domain.
	
	Finally, we examine the asymptotic profile of \(  u^*_{\Lambda,\, d} \) as \( d \to \infty \) and present the following result.
	\begin{theorem}\label{d to inf} Assume that \eqref{al average} holds. Then for fixed $\La\in\R$ and any $a_+\in[a_c, a_m)$, as $d\to\infty$, we have
		\begin{equation}\nonumber
			u^*_{\Lambda,\, d}\to u_*\ \ \text{uniformly on }[0, a_+]\times[0, 1],
		\end{equation}
		where $u_*=u_*(a)$ is the unique positive solution to the following problem:
		\begin{equation}\label{h}
			\begin{cases}
				u_*'(a)=-\int_0^1\mu(a, x)dx \; u_*(a), \; a\in(0, a_+),\\
				u_*(0)=\int_0^1f\left(x, \int_0^{a_+}\beta(a, x)u_*(a)da\right)dx.
			\end{cases}
		\end{equation}
	\end{theorem}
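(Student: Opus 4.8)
\textbf{Proof proposal for Theorem \ref{d to inf}.}

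The plan is to combine the principal-eigenvalue asymptotics from Theorem \ref{average} (which guarantees, via \eqref{al average}, that $\la_0(d,\La)>0$ for all large $d$, hence by Theorem \ref{stability MR} a unique positive equilibrium $u^*_{\La,d}$ exists) with a standard three-step scheme: a priori bounds, extraction of a limit, and identification of the limit as the unique solution of the averaged problem \eqref{h}. First I would establish uniform-in-$d$ a priori estimates for $u^*_{\La,d}$ on $[0,a_+]\times[0,1]$: since $f\le L$ by Assumption \ref{assump-f}-(iv), the boundary condition at $a=0$ gives $u^*_{\La,d}(0,x)\le L$, and then integrating the $a$-equation along with the parabolic comparison principle (using $\mu\ge0$) yields a uniform $L^\infty$ bound. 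Multiplying the $v$-equation form (after the change of variables $v=e^{-(\La/d)x}u$, or working directly) by $u^*_{\La,d}$ and integrating by parts in $x$ over $(0,1)$ produces a Dirichlet-energy estimate of the form $d\int_0^{a_+}\!\!\int_0^1 |\partial_x u^*_{\La,d}|^2\,dx\,da \le C$ uniformly in $d$; hence $\int_0^{a_+}\!\!\int_0^1 |\partial_x u^*_{\La,d}|^2\,dx\,da \to 0$ as $d\to\infty$, which forces the equilibrium to become spatially homogeneous in the limit.

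Next I would pass to the limit. From the uniform bounds and parabolic regularity (in the age variable $a$ playing the role of time), $\{u^*_{\La,d}\}$ is precompact in $C([0,a_+]\times[0,1])$; along a subsequence $d_n\to\infty$, $u^*_{\La,d_n}\to U$ uniformly, and the energy decay above shows $\partial_x U\equiv 0$, so $U=U(a)$. To find the equation for $U$, I would integrate the first equation of \eqref{logistic} over $x\in(0,1)$: the diffusion term $d\int_0^1 u_{xx}\,dx = d[u_x]_0^1$ vanishes because the no-flux (Neumann, after the transformation) boundary condition gives $u_x(a,0)=u_x(a,1)=0$ in the $v$-variable — here one must be careful that the genuine boundary condition in \eqref{logistic} is $du_x-\La u=0$, so either pass through \eqref{eigenvalue-prob}'s Neumann form and note the exponential weight $e^{-(\La/d)x}\to 1$ uniformly as $d\to\infty$, or observe directly that $\int_0^1(du_{xx}-\La u_x)dx = [du_x-\La u]_0^1 = 0$. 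Either way the $x$-average $\bar u_d(a):=\int_0^1 u^*_{\La,d}(a,x)dx$ satisfies $\bar u_d'(a) = -\int_0^1 \mu(a,x)u^*_{\La,d}(a,x)\,dx$ with $\bar u_d(0)=\int_0^1 f(x,\int_0^{a_+}\beta(a,x)u^*_{\La,d}(a,x)da)\,dx$; letting $d_n\to\infty$ and using $u^*_{\La,d_n}\to U(a)$ uniformly, the right-hand sides converge to $-\int_0^1\mu(a,x)dx\,U(a)$ and $\int_0^1 f(x,\int_0^{a_+}\beta(a,x)U(a)da)\,dx$ respectively, so $U$ solves \eqref{h}.

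Finally I would show that \eqref{h} has a unique positive solution and that $U>0$, which upgrades subsequential convergence to full convergence. Writing $U(a) = U(0)\,e^{-\int_0^a\int_0^1\mu(s,x)dx\,ds}$ from the ODE, substituting into the initial condition reduces \eqref{h} to a single scalar equation for $c:=U(0)\ge 0$, namely $c = G(c):=\int_0^1 f\big(x,\,c\int_0^{a_+}\beta(a,x)e^{-\int_0^a\int_0^1\mu(s,y)dy\,ds}da\big)dx$; by Assumption \ref{assump-f}-(ii),(iii),(iv) the map $c\mapsto G(c)/c$ is strictly decreasing, $G(0)=0$, $G'(0)>1$ precisely under hypothesis \eqref{al average} (using $\int_0^{a_c}=\int_0^{a_+}$ since $\beta\equiv0$ on $[a_c,a_m)$), and $G$ is bounded, so there is a unique positive root $c^*$; positivity of $U$ follows, and it also rules out the zero solution as the limit because $\la_0>0$ gives a uniform lower bound on the equilibrium via a sub-solution argument as in the proof of Theorem \ref{stability MR}. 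Since every subsequential limit equals this unique $U=u_*$, the whole family converges. The main obstacle I anticipate is the boundary-term bookkeeping in the $d\to\infty$ limit — handling the $\La u$ term and the exponential weight $e^{-(\La/d)x}$ carefully so that the limiting problem genuinely loses all $\La$-dependence — together with making the energy estimate uniform in $d$ despite the nonlocal, nonlinear coupling through $f$ at $a=0$; once spatial homogenization is secured, identifying \eqref{h} and its unique solvability is routine.
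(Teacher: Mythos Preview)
Your overall strategy matches the paper's: obtain uniform bounds, extract a subsequential limit, identify it as the unique solution of \eqref{h}, and upgrade to full convergence by uniqueness. The energy estimate $d\int_0^{a_+}\!\int_0^1 |\partial_x u^*_{\La,d}|^2\,dx\,da\le C$ is correct (the boundary term $d[uu_x]_0^1-\tfrac{\La}{2}[u^2]_0^1=\tfrac{\La}{2}[u^2]_0^1$ is indeed controlled by the $L^\infty$ bound), your integration-in-$x$ argument using $\int_0^1(du_{xx}-\La u_x)\,dx=[du_x-\La u]_0^1=0$ to derive the limiting ODE is clean, and the scalar fixed-point analysis for uniqueness of \eqref{h} is right.

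The genuine gap is the compactness step. You invoke ``parabolic regularity'' to conclude precompactness in $C([0,a_+]\times[0,1])$, but standard parabolic $L^p$ or Schauder estimates applied directly to $u_a=du_{xx}-\La u_x-\mu u$ produce constants that \emph{depend on} $d$ and blow up as $d\to\infty$; they do not yield $d$-uniform equicontinuity. Nor does the energy bound alone suffice: it controls $\int_0^{a_+}\!\int_0^1 u_x^2$, not $\sup_a\int_0^1 u_x^2$, so you cannot deduce $\sup_{a,x}|u^*_{\La,d}-\bar u_d(a)|\to 0$. The paper resolves this by a rescaling: after passing to $v=e^{-(\La/d)x}u^*$ (Neumann boundary), it sets $w(a,y)=v(a,d^{1/2}y)$, so that $w$ satisfies a parabolic equation on $(0,a_+)\times(0,d^{-1/2})$ with unit diffusion coefficient and bounded lower-order coefficients. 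Global $L^p$ parabolic estimates then give a $d$-uniform $W^{1,p}_aW^{2,p}_y$ bound, hence via embedding a $d$-uniform H\"older bound on $u^*$ together with the pointwise estimate $\|\partial_x u^*\|_{C}\le C_0 d^{-1/2}$. This rescaling is the key technical device you are missing; without it the claimed $C$-compactness is not justified.

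A secondary difference: to exclude $U\equiv 0$, you propose a uniform-in-$d$ lower bound via a sub-solution built from the principal eigenfunction. This is plausible but requires tracking the eigenfunction's behavior as $d\to\infty$. The paper instead argues by contradiction: assuming $u^*\to 0$, it normalizes $z=u^*/\max u^*$, repeats the (rescaled) compactness argument for $z$, and passes to the limit in the renormalized nonlocal condition to obtain $\int_0^1\!\int_0^{a_c}f_u(x,0)\beta(a,x)e^{-\int_0^a\int_0^1\mu}\,da\,dx=1$, contradicting \eqref{al average}. This is more self-contained than manufacturing a uniform sub-solution.
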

	
	We observe that the asymptotic profiles of the unique positive equilibrium are identical for both positive advection and zero advection scenarios. Furthermore, Assumption \eqref{al average} suggests that the environment supports the species' survival in terms of the spatial average. Additionally, this theorem indicates that under large diffusion, the asymptotic profile becomes spatially homogeneous and depends solely on the age.
	
	\section{Preliminaries}\label{Pre}   
	In this section, we first establish the existence of the principal eigenvalue and provide an equivalent characterization of \eqref{linear}. We then introduce generalized super- and sub-solutions to facilitate the study of the limiting behaviors of principal eigenvalues concerning diffusion and advection rates in subsequent analysis.
	
	\subsection{The existence of principal eigenvalue}\label{PRE}
	In this subsection, we investigate the existence of the principal eigenvalue of problem \eqref{linear} and provide an equivalent characterization of \eqref{linear}.
	
	\begin{lemma}\label{lem3.1}
		\label{Walker}
		There exists a unique principal eigenvalue of \eqref{linear}, which is algebraically simple and the corresponding eigenfunction is positive a.e. in $(0, a_m)\times(0, 1)$.
	\end{lemma}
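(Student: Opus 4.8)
The plan is to reformulate the age-structured eigenvalue problem \eqref{linear} as a fixed-point problem for a compact positive operator on a suitable ordered Banach space, and then invoke the Krein--Rutman theorem. First I would integrate the first equation of \eqref{linear} along characteristics in the age variable: for a candidate principal eigenvalue $\lambda$ and eigenfunction $u$, the function $a\mapsto u(a,\cdot)$ should satisfy $u(a,\cdot)=e^{-\lambda a}U(a)\phi$, where $\phi:=u(0,\cdot)$ and $U(a)$ is the evolution family on $L^p(0,1)$ (or $C([0,1])$) generated by the non-autonomous operator $A(a)v:=dv_{xx}+\Lambda q v_x-\mu(a,\cdot)v$ with Neumann boundary conditions. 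Because of Assumption \ref{Ass}-(iii) the potential $\mu(a,\cdot)$ is only in $L^\infty_{\mathrm{loc}}$ near $a_m$, but by Remark \ref{RK}-(ii) and Assumption \ref{Ass}-(ii) the birth kernel $\beta$ is supported in $[0,a_c]\times[0,1]$ with $a_c<a_m$, so only the restriction of $U(a)$ to $a\in[0,a_c]$ matters; on that compact age interval $\mu$ is genuinely bounded and the evolution family is well-defined, strongly positive, and compact for $a>0$ by standard parabolic smoothing.

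Substituting this representation into the nonlocal birth condition $u(0,x)=\int_0^{a_m}\beta(a,x)u(a,x)\,da$ yields the characteristic equation $\phi=\mathcal{L}_\lambda\phi$, where
\begin{equation}\nonumber
(\mathcal{L}_\lambda\phi)(x):=\int_0^{a_c}\beta(a,x)\,e^{-\lambda a}\,(U(a)\phi)(x)\,da.
\end{equation}
Thus $\lambda$ is a principal eigenvalue of \eqref{linear} with positive eigenfunction exactly when $1$ is the principal (Krein--Rutman) eigenvalue of the compact strongly positive operator $\mathcal{L}_\lambda$ on $C([0,1])$ — equivalently when the spectral radius satisfies $r(\mathcal{L}_\lambda)=1$. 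I would then show: (a) for each $\lambda\in\mathbb{R}$, $\mathcal{L}_\lambda$ is compact (it is an integral average of compact operators with a continuous integrand) and strongly positive/irreducible (because $U(a)$ is, and $\beta$ is positive on a nontrivial age range by Assumption \ref{Ass}-(ii)), so $r(\mathcal{L}_\lambda)>0$ is a simple eigenvalue with a strictly positive eigenfunction and no other eigenvalue has a positive eigenfunction; (b) the map $\lambda\mapsto r(\mathcal{L}_\lambda)$ is continuous and strictly decreasing, because $\lambda\mapsto \mathcal{L}_\lambda$ is continuous and strictly order-decreasing in operator sense (larger $\lambda$ gives a pointwise smaller kernel $e^{-\lambda a}$), and strict monotonicity of the spectral radius follows from irreducibility; (c) $r(\mathcal{L}_\lambda)\to\infty$ as $\lambda\to-\infty$ and $r(\mathcal{L}_\lambda)\to 0$ as $\lambda\to+\infty$, which one gets by comparing $\mathcal{L}_\lambda$ above and below with scalar multiples using the bounds $\underline\mu,\overline\mu,\underline\beta,\overline\beta$ and the sub-Markovian estimate $\|U(a)\|\le e^{-\int_0^a\underline\mu(s)ds}$ together with a lower bound from Harnack/positivity. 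By the intermediate value theorem there is a unique $\lambda_0$ with $r(\mathcal{L}_{\lambda_0})=1$.

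Finally, to get algebraic simplicity of $\lambda_0$ as an eigenvalue of the original problem (not merely geometric simplicity of $1$ for $\mathcal{L}_{\lambda_0}$), I would differentiate the characteristic relation: the generalized eigenspace collapses because $\frac{d}{d\lambda}r(\mathcal{L}_\lambda)|_{\lambda_0}<0$ (strict monotonicity) forces the algebraic multiplicity of the eigenvalue $1$ of $\mathcal{L}_{\lambda_0}$ to transfer to a simple zero of the relevant Evans-type function $\lambda\mapsto 1-r(\mathcal{L}_\lambda)$, a standard argument (see Walker's work cited in the excerpt) showing that no Jordan block can form. Positivity a.e. of the eigenfunction $u(a,\cdot)=e^{-\lambda_0 a}U(a)\phi_0$ on $(0,a_m)\times(0,1)$ follows from strong positivity of $U(a)$ for $a>0$ applied to the strictly positive $\phi_0$, after extending trivially past $a_c$ via $u_a=A(a)u-\lambda_0 u$ (where $\beta$ plays no further role). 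The main obstacle I anticipate is handling the low regularity of $\mu$ near $a_m$ when $a_m<\infty$: I would circumvent it precisely as Remark \ref{RK}-(ii) suggests, by working on $[0,a_+]\times[0,1]$ for a fixed $a_+\in[a_c,a_m)$, noting that the eigenvalue problem restricted there is equivalent to the full one because $\beta\equiv 0$ on $[a_c,a_m)$, so the construction of $U(a)$, the compactness of $\mathcal{L}_\lambda$, and all positivity arguments only ever require $\mu\in L^\infty([0,a_+]\times[0,1])$.
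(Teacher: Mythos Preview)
Your proposal is correct and follows essentially the same approach as the paper: both reduce \eqref{linear} to the characteristic equation $\phi=\mathcal{L}_\lambda\phi$ (the paper writes it as $\mathcal{M}_\lambda$) via the parabolic evolution family, apply Krein--Rutman to this compact positive operator, and use continuity and strict monotonicity of $\lambda\mapsto r(\mathcal{L}_\lambda)$ to find the unique $\lambda_0$ with $r(\mathcal{L}_{\lambda_0})=1$. The only cosmetic differences are that the paper wraps the construction in Thieme's Hille--Yosida/pseudoresolvent framework on the product space $L^p(0,1)\times L^1((0,a_m),L^p(0,1))$ (hence invoking ``nonsupporting'' rather than ``strongly positive'' since the $L^p$ cone has empty interior), whereas you work directly on $C([0,1])$ and skip that abstract layer; both routes land on the same operator and the same monotonicity argument.
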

	
	\begin{proof}
		The proof is similar to the approach used in {\cite[Theorem 3]{guo1994semigroup}} and {\cite[Lemma 2.6]{walker2013some}}. For the sake of completeness, we provide a sketch of the proof here. Denote by $X$ the Banach space 
		$$
		X=L^p(0, 1), \,\, p>1 
		$$ 
		and denote its positive cone by $X_+$. Observe that $X_+$ is a normal and generating cone. Define the following function spaces:
		\begin{equation*}
			\mathcal{X}=X\times L^1((0, a_m), X), \quad\,\, \mathcal{X}_0=\{0_X\}\times L^1((0, a_m), X)
		\end{equation*}
		endowed with the product norms and the positive cones:
		\begin{equation*}
			\mathcal{X}^+=X_+\times\{u\in L^1((0, a_m), X): u(a, \cdot)\in X_+,\; \text{a.e. in }(0, a_m)\},\quad \mathcal{X}_0^+=\mathcal{X}^+\cap\mathcal{X}_0.
		\end{equation*}
		
		We consider the following problem posed in $X$ for $0\leq\tau\leq a< a_m$ with Neumann boundary conditions:
		\begin{equation}\label{B_1}
			\begin{cases}
				\begin{array}{ll}
					\smallskip
					v_a(a)=dv_{xx}(a)+\La qv_x(a)-\mu(a, \cdot)v(a), \quad \tau<a<a_m,\\
					v(\tau)=\eta\in X.
				\end{array}
			\end{cases}
		\end{equation}
		It follows that problem \eqref{B_1} generates an evolution family on $X$, denoted by $\{\mathcal U(a, \tau)\}_{0\le\tau\le a<a_m}$. Actually, such $\mathcal U$ can be given by a Green's function $G$ associated with Neumann boundary condition:
		\begin{equation}\nonumber%\label{U}
			(\mathcal U(a, \tau)\eta)(x)=\int_0^1 G(a, \tau; x-y)\eta(y)dy, \quad \forall \,0\le \tau\le a <a_m.
		\end{equation}
		Moreover, there exist $M>0$ and $\omega\in\R$ such that 
		\begin{eqnarray}\nonumber%\label{HY estimates}
			\norm{\mathcal U(a, \tau)}_{\mathcal L (X)}\le M e^{\omega(a-\tau)}, \,\,\;\forall \,0\le \tau\le a <a_m.
		\end{eqnarray} 
		
		We also define the following family of bounded linear operators $\{W_\lambda\}_{\lambda>\omega}\subset \mathcal L\left(\mathcal{X},\mathcal{X}_0\right)$ for $(\eta,g)\in \mathcal X$ by 
		\begin{eqnarray}\nonumber
			W_\lambda(\eta, g)=\left(0, \,\,\, e^{-\lambda a}\mathcal U(a, 0)\eta+\int_0^a e^{-\lambda(a-\tau)}\mathcal U(a, \tau)g(\tau)d\tau\right).
		\end{eqnarray}
		Following the argument in Thieme \cite[Section 6]{thieme2009spectral}, we can prove that this provides a family of positive pseudoresolvents. Thus by Pazy \cite[Section 1.9]{pazy2012semigroups}, there exists a unique closed Hille-Yosida operator $B: dom(B)\subset \mathcal{X}\mapsto\mathcal{X}$ with $\overline{dom(B)}=\mathcal X_0$ such that 
		\begin{equation}\nonumber%\label{def B_1}
			\left(\lambda I-B\right)^{-1}=W_\lambda \quad \text{ for all } \lambda>\omega,
		\end{equation}
		where  $I: \mathcal{X}\mapsto\mathcal{X}$ denotes the identity operator. 
		On the other hand, we define $C\in\mathcal L(\mathcal X_0,\mathcal X)$ by
		\begin{equation}\nonumber%\label{C}
			C(0, h)=\left(\int_{0}^{a_m}\beta(a, \cdot)h(a)da, \;  0\right),\; (0, h)\in\mathcal{X}_0,
		\end{equation}
		and  $A: dom(A)\subset \mathcal{X}\mapsto \mathcal{X}$ by
		\begin{equation}\nonumber%\label{A}
			%\begin{cases}
			dom(A)=dom(B)\subset \mathcal X_0,\quad
			A=B+C.
			%\end{cases}
		\end{equation}
		It is easily seen that the existence of eigenvalue of \eqref{linear} is equivalent to that of  eigenvalue of the operator $A$. Now, for each $\lambda\in\R$, we  define a linear operator $\mathcal{M}_{\lambda}: X\mapsto X$ by
		\begin{equation}\label{M}
			\mathcal{M}_{\lambda}\phi=\int_{0}^{a_m}\beta(a, \cdot)e^{-\lambda a}\mathcal{U}(a, 0)\phi \,da, \quad \forall \phi\in X.
		\end{equation}
		In fact, $\mathcal{M}_{\lambda}$ is obtained by plugging the resolvent of $B$ into the integral initial condition \eqref{eigenvalue-prob}, and we refer to \cite{guo1994semigroup,walker2013some} for the derivation. Moreover, by Assumption \ref{Ass}-(ii), $\mathcal{M}_{\lambda}$ can be shown to be a compact and nonsupporting operator in $X$ \cite{guo1994semigroup}, where \textit{nonsupporting} is a generalization of strong positivity when working on a Banach space with a positive cone which has empty interior, for example the $L^p$ space (see \cite{marek1970frobenius} or \cite{sawashima1964spectral} for a complete definition). Thus by the Krein-Rutman theorem, for each $\lambda\in\R$, the spectral radius $r(\mathcal M_{\lambda})$ of operator $\mathcal M_{\la}$ is the principal eigenvalue, which is algebraically simple and the corresponding eigenfunction can be positive. 
		
		Next, note from \eqref{M} that $\lambda\mapsto r(\mathcal M_{\lambda})$ is continuous and strictly decreasing. It follows that such $\lambda$ satisfying $r(\mathcal M_{\lambda})=1$ indeed exists and is unique, denoted by $\la_0$. 
		By definition, such $\la_0$ is an eigenvalue of operator $A$. Moreover, if $\lambda'>\la_0$, one has $r(\mathcal M_{\lambda'})<r(\mathcal M_{\la_0})=1$, which implies that $(I-\mathcal M_{\lambda'})^{-1}$ exists, and so does $(\lambda' I-A)^{-1}$. This prevents $\lambda'$ to be an eigenvalue of $A$ and therefore $\la_0$ is the principal eigenvalue of $A$. Furthermore, the algebraic simplicity follows from the one of $r(\mathcal M_{\la_0})$. In addition, by the assumptions of $\be, \mu, f$ and classical parabolic estimates, the principal eigenfunction of $A$ associated with $\la_0$ belongings to $W^{1, 1}((0, a_m), W^{2, p}(0, 1))$ for any $p>1$. Thus the proof is complete.
	\end{proof}
	
	\begin{remark}\label{remark-2.1}
		{\rm Due to $\beta\equiv 0$ on $[a_c,a_m)\times[0, 1]$ as in  Assumption {\rm \ref{Ass}-(ii)}, the characteristic equation \eqref{M} can be rewritten as follows: 
			$$
			\mathcal{M}_{\la}\phi=\int_{0}^{a_c}\beta(a, \cdot)e^{-\la a}\mathcal{U}(a, 0)\phi\, da,\quad \forall\phi\in X. 
			$$
			We observe from the proof of Lemma \ref{Walker} that the principal eigenvalue of \eqref{linear} is the unique value such that $r(\mathcal M_{\la_0})=1$. Hence, $\lambda_0$ is also the principal eigenvalue of \eqref{linear} with $[0, a_m)$ replaced by $[0, a_+]$ for any $a_+\in[a_c, a_m)$. Moreover, the value of $\la_0$ is independent on the choice of $a_+\in[a_c, a_m)$. Thus, for the remainder of the paper, we only need to focus on the eigenvalue problem \eqref{linear} posed on \([0, a_c]\times\mathbb{R}\) instead of on \([0, a_m)\times\mathbb{R}\). }
	\end{remark}

	\subsection{Generalized super/sub-solutions}
	In this subsection, we introduce the concept of generalized super/sub-solutions, inspired by the work of Liu et al. \cite{liu2021asymptotics, liu2021asymptoticsII}. Keeping Remark \ref{remark-2.1} in mind,
	we first consider the following linear problem:
	\begin{equation}\label{equ}
		\begin{cases}
			w_a= dw_{xx}+\La q(x)w_x-\mu(a, x)w,& (a, x)\in(0, a_c)\times(0, 1),\\
			w_x(a, 0)=0, \; w_x(a, 1)=0, & a\in[0, a_c],\\
			w(0, x)=\int_{0}^{a_c}\beta(a, x)w(a, x)da,& x\in(0, 1).
		\end{cases}
	\end{equation}
	We now give the definition of  generalized super-/sub-solutions associated with \eqref{equ}.
	\begin{definition}\label{super/sub} The function $\overline w$ in $[0, a_c]\times[0, 1]$ is called a  generalized super-solution of \eqref{equ} if there exists a set $\mathbb X$ consisting of at most finitely many points:
		\begin{eqnarray*}
			\mathbb{X}=\emptyset\ \text{ or }\ \mathbb{X}=\{\kappa_i: \kappa_i\in(0, 1),\; i=1, \dots, N\},
		\end{eqnarray*}
		for some integer $N\ge1$, such that
		\begin{itemize}
			\item [(1)] $\overline w\in C([0, a_c]\times(0, 1))\cap 
			W^{1, 1}((0, a_c), C^2((0, 1)\setminus\mathbb X))$;
			\item [(2)] $\overline w_x(a, x^+)<\overline w_x(a, x^-)$, for every $x\in\mathbb X$  and $a\in[0, a_c]$;
			\item [(3)] $\overline w$ satisfies
			\begin{eqnarray}\nonumber
				\begin{cases}
					\overline{w}_a\geq d\overline{w}_{xx}+\La q(x)\overline w_x-\mu(a, x)\overline{w},& (a, x)\in(0, a_c)\times((0, 1)\setminus\mathbb X),\\
					\overline w_x(a, 0)\le0, \;\overline w_x(a, 1)\ge0, & a\in[0, a_c],\\
					\overline{w}(0, x)\ge\int_{0}^{a_c}\beta(a, x)\overline{w}(a, x)da,& x\in(0, 1).
				\end{cases}
			\end{eqnarray}
		\end{itemize}
		A  generalized super-solution $\overline w$ is called a strict  generalized super-solution if it is not a solution of \eqref{equ}. Moreover, a function $\underline w$ is called a (strict)  generalized sub-solution of \eqref{equ} if $-\underline w$ is a (strict)  generalized super-solution.
	\end{definition}
	
	\begin{lemma}\label{super max principle}
		Let $\overline w\ge0$ be a  generalized super-solution of \eqref{equ} defined in Definition \ref{super/sub}. Then $\overline w>0$ in $[0, a_c]\times[0, 1]$ unless $\overline w\equiv0$.
	\end{lemma}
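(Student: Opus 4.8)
The plan is to run a parabolic-type strong maximum principle in the age variable $a$, treating $a$ as a time-like variable, while carefully handling the two novel features: the kinks at the points of $\mathbb X$ (where the one-sided second derivative jumps but condition (2) guarantees the jump has the favourable sign) and the nonlocal initial condition at $a=0$. First I would reduce, via the change of variables $\overline v(a,x)=e^{-(\Lambda/d)\int_0^x q}$--style substitution used already in the paper (or more simply by noting $q>0$ is bounded and the advection term is lower-order), to a uniformly parabolic operator in self-adjoint form, so that the classical Hopf lemma and strong maximum principle apply on each subinterval of $(0,1)\setminus\mathbb X$. The inequality in (3) says $\overline w$ is a supersolution of $w_a - dw_{xx} - \Lambda q w_x + \mu w \ge 0$ on each such subinterval.

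Next I would argue as follows. Suppose $\overline w\not\equiv 0$ but $\overline w(a_0,x_0)=0$ for some $(a_0,x_0)\in[0,a_c]\times[0,1]$. Since $\overline w\ge 0$, the point $(a_0,x_0)$ is an interior minimum in $x$ for the age-slice $a=a_0$. If $x_0\notin\mathbb X$ and $x_0\in(0,1)$, the classical strong maximum principle applied on the parabolic cylinder $(0,a_0)\times J$, where $J$ is the maximal open interval of $(0,1)\setminus\mathbb X$ containing $x_0$, forces $\overline w\equiv 0$ on $[0,a_0]\times J$; if $x_0\in\{0,1\}$ the Hopf lemma together with the boundary inequalities $\overline w_x(a,0)\le 0$, $\overline w_x(a,1)\ge 0$ yields the same conclusion. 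The kink points are handled by condition (2): at $x=\kappa_i$, the distributional second derivative of $\overline w$ contains a nonpositive Dirac mass (since $\overline w_x(a,\kappa_i^+)<\overline w_x(a,\kappa_i^-)$), which only reinforces the supersolution inequality, so a zero cannot be created or propagate through $\kappa_i$ from one side without also occurring on the other side. Iterating across the finitely many intervals determined by $\mathbb X$, a zero at age $a_0$ in the spatial interior propagates to $\overline w\equiv 0$ on the whole slab $[0,a_0]\times[0,1]$; in particular $\overline w(0,\cdot)\equiv 0$.

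It then remains to propagate forward in age and to use the nonlocal condition. From $\overline w(0,\cdot)\equiv 0$ and the integral inequality $\overline w(0,x)\ge\int_0^{a_c}\beta(a,x)\overline w(a,x)\,da\ge 0$ we get $\int_0^{a_c}\beta(a,x)\overline w(a,x)\,da=0$ for every $x$; since $\beta\ge 0$ and, by Assumption 1.1-(ii), $\int_a^{a_c}\underline\beta(l)\,dl>0$ for every $a\in[0,a_c)$, this forces $\overline w(a,x)=0$ for a.e. $a$ in a set accumulating at every age below $a_c$ — more precisely $\overline w\equiv 0$ on $[0,a_c)\times[0,1]$, hence on $[0,a_c]\times[0,1]$ by continuity. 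Conversely, and this is what we actually need, if the zero of $\overline w$ occurs first at some age $a_0>0$, the forward-in-age strong maximum principle (parabolic, with $a$ as time) already gives $\overline w\equiv 0$ on $[0,a_0]\times[0,1]$ by the argument above applied on $(0,a_0)$, and then the nonlocal relation forces $\overline w\equiv 0$ for all ages. Thus $\overline w\equiv 0$, contradicting the assumption, so $\overline w>0$ everywhere.

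The main obstacle I anticipate is making the kink analysis at the points of $\mathbb X$ rigorous within the regularity class $W^{1,1}((0,a_c),C^2((0,1)\setminus\mathbb X))$: one must justify that the supersolution inequality survives in the distributional sense across $\kappa_i$ using precisely condition (2), and that the strong maximum principle can be pieced together across the finitely many intervals. This is standard for elliptic/parabolic problems with a finite set of "corner" points but requires a careful local argument — e.g. comparing with an explicit barrier on a small two-sided neighbourhood of $\kappa_i$, using that the jump in $\overline w_x$ has the sign that makes $\overline w$ a supersolution there as well. Everything else (the Hopf lemma at $x=0,1$, the forward propagation in $a$, and the use of Assumption 1.1-(ii) to close the nonlocal step) is routine.
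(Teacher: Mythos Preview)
Your proposal is essentially correct and follows the approach the paper intends: the paper omits the proof and simply cites \cite[Lemma 2.2]{liu2021asymptotics}, which is exactly the parabolic strong maximum principle argument you outline (treat $a$ as time, apply the classical strong maximum principle on each subinterval of $(0,1)\setminus\mathbb X$, use the Hopf lemma at $x\in\{0,1\}$, and propagate across the kink points via condition~(2)). The one genuinely new ingredient compared to the time-periodic setting of \cite{liu2021asymptotics} is precisely the closure step you describe: instead of periodicity, one uses the nonlocal inequality $\overline w(0,x)\ge\int_0^{a_c}\beta(a,x)\overline w(a,x)\,da$ together with Assumption~1.1-(ii) to conclude from $\overline w(0,\cdot)\equiv 0$ that $\overline w$ vanishes at ages arbitrarily close to $a_c$, whence backward propagation finishes the job.

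Two small points worth tightening. First, the case $a_0=0$ (a zero on the initial slice) is not covered by your backward-propagation step as written; you should start there by using the nonlocal inequality at the single point $x_0$ to produce a zero of $\overline w(\cdot,x_0)$ at some positive age, then run the argument. Second, your ``distributional Dirac mass'' description of the kink is morally right but the clean way to cross $\kappa_i$ is via the Hopf lemma on each adjacent cylinder: if $\overline w$ vanishes on one side of $\kappa_i$ up to age $a_0$ then $\overline w_x(a,\kappa_i^{\mp})=0$, and if $\overline w>0$ on the other side Hopf gives a strict sign for $\overline w_x(a,\kappa_i^{\pm})$ that directly contradicts condition~(2). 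This avoids any appeal to distributional second derivatives and stays within the stated regularity class.
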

	
	The proof of Lemma \ref{super max principle} closely follows the approach used by Liu et al. in \cite[Lemma 2.2]{liu2021asymptotics}. Therefore, we omit the detailed proof here.
	
	\begin{definition}\label{SMP}
		{\rm We say that \eqref{equ} admits the \textit{strong maximum principle} if $w$ satisfying Definition \ref{super/sub} implies that $w>0$ in $[0, a_c]\times[0, 1]$ unless $w\equiv0$.}
	\end{definition}
	
	Let \(\lambda_0\) be the principal eigenvalue of \eqref{linear}. According to Remark \ref{remark-2.1}, \(\lambda_0\) is equivalently the principal eigenvalue of the following eigenvalue problem:
	\begin{equation}\label{equ-ei}
		\begin{cases}
			u_a= du_{xx}+\La q(x)u_x-\mu(a, x)u-\lambda u,& (a, x)\in(0, a_c)\times(0, 1),\\
			u_x(a, 0)=0, \; u_x(a, 1)=0, & a\in[0, a_c],\\
			u(0, x)=\int_{0}^{a_c}\beta(a, x)u(a, x)da,& x\in(0, 1).
		\end{cases}
	\end{equation}
	With the above preparations, we are now ready to present an equivalent characterization for the principal eigenvalue, the strong maximum principle, and a positive strict generalized super-solution. We omit the proof here, as it can be found in Kang \cite[Proposition 2.7]{kang2022effects}. 
	\begin{proposition}\label{equivalent}
		The following statements are equivalent:
		\begin{itemize}
			\item [(1)] \eqref{equ} admits the strong maximum principle property;
			\item [(2)] $\lambda_0<0$;
			\item [(3)] \eqref{equ} has a strict and positive  generalized super-solution.
		\end{itemize}
	\end{proposition}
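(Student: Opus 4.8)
# Proof Proposal for Proposition \ref{equivalent}

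The plan is to establish the chain of equivalences $(2)\Rightarrow(1)\Rightarrow(3)\Rightarrow(2)$, exploiting the characterization of $\lambda_0$ from Lemma \ref{lem3.1} as the unique value with $r(\mathcal{M}_{\lambda_0})=1$, together with the strict monotonicity of $\lambda\mapsto r(\mathcal{M}_\lambda)$.

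\textbf{Step $(2)\Rightarrow(1)$.} Suppose $\lambda_0<0$. Let $\overline{w}\ge 0$ be a generalized super-solution of \eqref{equ} in the sense of Definition \ref{super/sub}; by Lemma \ref{super max principle} we already know $\overline{w}>0$ or $\overline{w}\equiv 0$, so the content to extract is that a \emph{nonnegative, nontrivial} super-solution of \eqref{equ} (with $\lambda=0$ in the equation) cannot exist unless it is strictly positive — which is exactly Lemma \ref{super max principle}. Hence Definition \ref{SMP} reduces here to showing that \eqref{equ} has no nontrivial \emph{solution} that vanishes somewhere; but if $w$ solved \eqref{equ} and $w\ge 0$, $w\not\equiv 0$, then $w$ would be a (super- and sub-) solution of \eqref{equ-ei} with eigenvalue $\lambda=0$, forcing $\lambda_0=0$ by uniqueness of the principal eigenvalue, contradicting $\lambda_0<0$. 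Therefore any $w$ satisfying Definition \ref{super/sub} is either $\equiv 0$ or $>0$ everywhere, i.e. \eqref{equ} admits the strong maximum principle. (The cleanest route is: apply Lemma \ref{super max principle} directly; the role of $\lambda_0<0$ is only to rule out the degenerate case where a super-solution happens to be a solution vanishing somewhere — and that case cannot occur because the principal eigenfunction is a.e. positive.)

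\textbf{Step $(1)\Rightarrow(3)$.} Assume the strong maximum principle holds. I construct a strict positive generalized super-solution explicitly. Take the principal eigenfunction $u_0>0$ of \eqref{equ-ei} associated with $\lambda_0$ (a.e. positive, hence positive by parabolic regularity). Then $\overline{w}:=u_0$ satisfies
\[
\overline{w}_a - d\overline{w}_{xx}-\La q(x)\overline{w}_x+\mu(a,x)\overline{w} = -\lambda_0\,\overline{w},
\]
with Neumann conditions and the nonlocal identity $\overline{w}(0,x)=\int_0^{a_c}\beta(a,x)\overline{w}(a,x)\,da$. For this to be a generalized \emph{super}-solution we need $-\lambda_0\overline{w}\ge 0$, i.e. $\lambda_0\le 0$; and to be \emph{strict} we need $\lambda_0<0$. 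So the task is to show that $(1)$ forces $\lambda_0<0$. If instead $\lambda_0\ge 0$, pick $\lambda'\in[0,\lambda_0]$ and note $r(\mathcal{M}_{\lambda'})\ge r(\mathcal{M}_{\lambda_0})=1$ with equality iff $\lambda'=\lambda_0$. When $\lambda_0>0$ one can exhibit a nontrivial nonnegative solution of \eqref{equ} that vanishes somewhere — e.g. build it from a Krein--Rutman eigenfunction of $\mathcal{M}_0$ (whose spectral radius exceeds $1$), via the formula relating eigenfunctions of $A$ to those of $\mathcal{M}_\lambda$ through $\mathcal{U}(a,0)$, and observe that any such object is a sub-solution of \eqref{equ} at age level $0$ after rescaling, contradicting the strong maximum principle. (In the paper's source this is exactly \cite[Proposition 2.7]{kang2022effects}, so I would cite it; the essential mechanism is that the strong maximum principle fails precisely when $\lambda_0\ge 0$, because then \eqref{equ} either has a nontrivial solution — the eigenfunction when $\lambda_0=0$, which is fine and positive, so the real obstruction is $\lambda_0>0$ producing a sign-changing or vanishing super-solution.) Once $\lambda_0<0$ is secured, $\overline{w}=u_0$ is a strict positive generalized super-solution.

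\textbf{Step $(3)\Rightarrow(2)$.} Suppose \eqref{equ} has a strict positive generalized super-solution $\overline{w}>0$. I argue by contradiction that $\lambda_0\ge 0$ is impossible. Let $u_0>0$ be the principal eigenfunction of \eqref{equ-ei}. Since $\overline{w}>0$ on the compact set $[0,a_c]\times[0,1]$ and $u_0>0$ there, the quantity $t^*:=\sup\{t>0: \overline{w}\ge t\,u_0\}$ is finite and positive, and $z:=\overline{w}-t^*u_0\ge 0$ touches zero somewhere. The function $z$ is a generalized super-solution of the $\lambda_0$-shifted problem: using that $\overline{w}$ is a super-solution of \eqref{equ} and $u_0$ solves \eqref{equ-ei}, one computes
\[
z_a - dz_{xx}-\La q(x)z_x+\mu(a,x)z \;\ge\; -\lambda_0\,t^*u_0 \;=\; \lambda_0\,t^*u_0\cdot(-1),
\]
together with the nonlocal inequality $z(0,x)\ge\int_0^{a_c}\beta(a,x)z(a,x)\,da$ and the correct one-sided Neumann/jump conditions (the concavity-type jump condition (2) of Definition \ref{super/sub} is preserved since $u_0$ is smooth in $x$). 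If $\lambda_0\ge 0$ this makes $z$ a genuine generalized super-solution of \eqref{equ} itself (the extra term $-\lambda_0 t^* u_0\le 0$ only helps the differential inequality); since $z\ge0$ and vanishes somewhere, Lemma \ref{super max principle} forces $z\equiv 0$, i.e. $\overline{w}=t^*u_0$. But then $\overline{w}$ is a solution of \eqref{equ-ei} with eigenvalue $\lambda_0\ge 0$ and plugging into the differential inequality defining a super-solution of \eqref{equ} gives $0\ge -\lambda_0\overline{w}$, forcing $\lambda_0=0$ and $\overline{w}=t^*u_0$ to be an exact solution of \eqref{equ} — contradicting strictness. Hence $\lambda_0<0$.

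\textbf{Main obstacle.} The delicate point is the sliding/comparison argument in $(3)\Rightarrow(2)$: one must verify that $z=\overline{w}-t^*u_0$ genuinely satisfies \emph{all} clauses of Definition \ref{super/sub} — in particular that the finitely many jump points of $\overline{w}$ still give the correct inequality $z_x(a,x^+)<z_x(a,x^-)$ (immediate, since $u_0$ is $C^2$ in $x$) and that the nonlocal boundary inequality survives the subtraction (it does, by linearity). Equally, one must handle with care the degenerate equality case $z\equiv 0$, where strictness of the super-solution is exactly what delivers the contradiction. Since the full details are carried out in \cite[Proposition 2.7]{kang2022effects}, I would present the structure above and refer there for the routine verifications, emphasizing that the nonlocal term $\int_0^{a_c}\beta(a,x)w\,da$ behaves linearly and therefore poses no new difficulty in this particular equivalence (unlike in the asymptotic analyses of Section \ref{EA}).
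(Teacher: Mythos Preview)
The paper does not actually prove this proposition; it refers to \cite[Proposition 2.7]{kang2022effects}. Your sliding argument for $(3)\Rightarrow(2)$ is the standard one and is essentially correct, apart from a sign slip: the lower bound for $z_a-dz_{xx}-\La q z_x+\mu z$ is $+t^*\lambda_0 u_0$, not $-\lambda_0 t^*u_0$; with the correct sign, $\lambda_0\ge0$ directly makes $z$ a nonnegative super-solution of \eqref{equ}, and your contradiction via Lemma~\ref{super max principle} and strictness goes through.

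The genuine gap is in $(2)\Rightarrow(1)$. You start by assuming the super-solution satisfies $\overline w\ge0$ and then invoke Lemma~\ref{super max principle}, but Definition~\ref{SMP} imposes \emph{no} sign hypothesis on $w$; if it did, Lemma~\ref{super max principle} would make the strong maximum principle hold regardless of $\lambda_0$, and the equivalence $(1)\Leftrightarrow(2)$ would be vacuous. The correct argument is another sliding step: given any generalized super-solution $w$, set $t^*=\inf\{t\ge0:\ w+tu_0\ge0\}$; if $t^*>0$, then $z:=w+t^*u_0\ge0$ touches zero and satisfies $z_a-dz_{xx}-\La qz_x+\mu z\ge -t^*\lambda_0 u_0>0$ (here $\lambda_0<0$ is used), so Lemma~\ref{super max principle} forces $z\equiv0$, whence $w=-t^*u_0$ and the super-solution inequality for $w$ yields $t^*\lambda_0 u_0\ge0$, a contradiction. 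Hence $t^*=0$, $w\ge0$, and Lemma~\ref{super max principle} finishes. Your $(1)\Rightarrow(3)$ is likewise incomplete; the clean route is the contrapositive: if $\lambda_0\ge0$ then $-u_0$ is a generalized super-solution of \eqref{equ} (since $(-u_0)_a-d(-u_0)_{xx}-\La q(-u_0)_x+\mu(-u_0)=\lambda_0 u_0\ge0$, with equality in the nonlocal and Neumann conditions) that is strictly negative, so SMP fails. Thus SMP forces $\lambda_0<0$, and then $u_0$ itself is the desired strict positive super-solution.
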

	
	It follows directly from Proposition \ref{equivalent} that \(\lambda_0 < 0\) if and only if \eqref{equ} has a strict generalized super-solution that is positive on \([0, a_c] \times [0, 1]\). Conversely, Proposition \ref{equivalent} indicates that \(\lambda_0 > 0\) implies the existence of a positive strict generalized sub-solution of \eqref{equ}, which can be chosen as a principal eigenfunction corresponding to \(\lambda_0\). Here, we  also provide the following assertion, whose proof is identical to that of Peng and Zhao \cite[Corollary 2.1]{peng2015effects}.
	\begin{corollary}\label{corsub}
		Assume that there exists a strict  generalized sub-solution $\underline w$ of \eqref{equ} with $\underline w\ge0$ in $[0, a_c]\times[0, 1]$, then we have $\la_0\ge0$.
	\end{corollary}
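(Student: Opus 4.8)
The plan is to argue by contradiction: suppose $\underline w \ge 0$ is a strict generalized sub-solution of \eqref{equ} but $\lambda_0 < 0$. By Proposition \ref{equivalent}, $\lambda_0 < 0$ is equivalent to \eqref{equ} admitting the strong maximum principle. The idea is to show that the existence of a nonnegative strict generalized sub-solution is incompatible with the strong maximum principle, essentially by a comparison argument against the principal eigenfunction.

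Here is how I would carry it out. By Lemma \ref{lem3.1} and Proposition \ref{equivalent}, when $\lambda_0 < 0$ the problem \eqref{equ} has the strong maximum principle, and there is a principal eigenfunction $u_0 > 0$ on $[0, a_c] \times [0, 1]$ solving \eqref{equ-ei} with eigenvalue $\lambda_0$. Rewriting, $u_0$ is a positive solution of
\begin{equation}\nonumber
\begin{cases}
(u_0)_a = d(u_0)_{xx} + \La q(x)(u_0)_x - (\mu(a, x) + \lambda_0)u_0, & (a, x) \in (0, a_c) \times (0, 1),\\
(u_0)_x(a, 0) = 0, \; (u_0)_x(a, 1) = 0, & a \in [0, a_c],\\
u_0(0, x) = \int_0^{a_c} \beta(a, x) u_0(a, x)\,da, & x \in (0, 1).
\end{cases}
\end{equation}
Since $\lambda_0 < 0$, the term $-(\mu + \lambda_0)$ is a strict relaxation of $-\mu$, so $u_0$ is in fact a strict generalized super-solution of \eqref{equ} itself: it satisfies $(u_0)_a - d(u_0)_{xx} - \La q(u_0)_x + \mu u_0 = -\lambda_0 u_0 > 0$, and the boundary and nonlocal conditions hold with equality. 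Now I would compare $\underline w$ against a suitable scalar multiple of $u_0$: let $\theta^* = \inf\{\theta > 0 : \theta u_0 \ge \underline w \text{ on } [0, a_c] \times [0, 1]\}$, which is finite and positive (positive because $\underline w \not\equiv 0$ — a strict sub-solution cannot vanish identically — and because $u_0$ is bounded; finite because $u_0$ is bounded below by a positive constant while $\underline w$ is bounded above). Then $z := \theta^* u_0 - \underline w \ge 0$, and $z$ is a generalized super-solution of \eqref{equ} (a positive combination of the super-solution $\theta^* u_0$ and $-\underline w$, noting $-\underline w$ is a generalized super-solution by Definition \ref{super/sub}). By Lemma \ref{super max principle}, either $z > 0$ everywhere or $z \equiv 0$. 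In the first case, one can slightly decrease $\theta^*$ and still keep $\theta u_0 \ge \underline w$ by a compactness/continuity argument (this is where the jump condition (2) in Definition \ref{super/sub} and the structure at the finitely many points $\kappa_i$ must be handled carefully, as the inequality on $x$-derivatives at those points goes the right way), contradicting minimality of $\theta^*$. In the second case $\underline w = \theta^* u_0$, but then $\underline w$ would be an actual solution of \eqref{equ} with $\lambda_0 < 0$ forced to equal $0$ in the nonlocal/interior balance — more precisely, plugging $\underline w = \theta^* u_0$ into the sub-solution inequalities and using that $u_0$ solves \eqref{equ-ei} with $\lambda_0 \ne 0$ yields $-\lambda_0 \theta^* u_0 \le 0$, i.e. $\lambda_0 \ge 0$, contradicting $\lambda_0 < 0$; alternatively it contradicts $\underline w$ being a \emph{strict} sub-solution.

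The main obstacle I anticipate is the sliding/continuity step at the corner points $\mathbb{X} = \{\kappa_i\}$: because $\underline w$ is only piecewise $C^2$ with prescribed downward jumps in $\overline w_x$ (equivalently upward jumps for $\underline w_x$), one must check that the comparison $z \ge 0$ and the strict positivity propagated by Lemma \ref{super max principle} survive across these points, and that the infimum defining $\theta^*$ is attained with the touching point not hidden at a $\kappa_i$ where the maximum principle argument could fail. Fortunately this is precisely the scenario Lemma \ref{super max principle} is designed to handle (it follows \cite[Lemma 2.2]{liu2021asymptotics}), so the jump conditions are compatible with the maximum principle, and the argument closes. A cleaner alternative — and the one actually indicated in the excerpt — is to mirror the proof of \cite[Corollary 2.1]{peng2015effects}: integrate the sub-solution inequality for $\underline w$ against the principal eigenfunction of the \emph{adjoint} problem (which exists and is positive by the same Krein–Rutman argument as in Lemma \ref{lem3.1}), and read off the sign of $\lambda_0$ directly from the resulting integral identity, where the nonlocal boundary term and the advection term integrate out by the adjoint boundary conditions, leaving $\lambda_0 \int\!\!\int (\text{adjoint eigenfunction}) \underline w \le 0$ with a positive integral, forcing $\lambda_0 \ge 0$.
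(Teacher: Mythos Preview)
Your proposal is correct. The paper itself does not give an independent proof of Corollary~\ref{corsub}; it simply asserts that the argument is identical to that of \cite[Corollary 2.1]{peng2015effects} and moves on. Your detailed sliding argument---comparing $\underline w$ against $\theta^* u_0$ for the minimal $\theta^*$, then applying Lemma~\ref{super max principle} to $z=\theta^* u_0-\underline w$---is a valid and standard route to this kind of statement, and your verification that the jump conditions for $z$ at the corner points $\mathbb{X}$ inherit the correct sign (because $u_0$ is smooth and $-\underline w$ carries the super-solution jump) is exactly what is needed to make Lemma~\ref{super max principle} applicable. The adjoint-integration alternative you sketch at the end is the other standard approach and is closer in spirit to what \cite{peng2015effects} does.

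One small correction: in the case $z\equiv 0$, your remark ``alternatively it contradicts $\underline w$ being a \emph{strict} sub-solution'' is not quite right. If $\underline w=\theta^* u_0$ with $\lambda_0<0$, then $\underline w$ satisfies $\underline w_a - d\underline w_{xx}-\Lambda q\underline w_x+\mu\underline w = -\lambda_0\underline w>0$, so it is \emph{not} a solution of \eqref{equ}; strictness is therefore not violated. But this does not matter, since your primary observation---that the sub-solution inequality $\underline w_a - d\underline w_{xx}-\Lambda q\underline w_x+\mu\underline w\le 0$ combined with the eigenfunction equation forces $-\lambda_0\theta^* u_0\le 0$, contradicting $\lambda_0<0$---already closes that case cleanly.
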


	\section{Limiting behaviors of the principal eigenvalue of \eqref{linear}}\label{EA}
	In this section, we present the proofs of Theorems \ref{Lalambda}, \ref{Dlambda}, \ref{Dlambda La=0} and \ref{average}.
	
	\subsection{Limiting behavior for large advection}
	In this subsection, we give the proof of Theorem \ref{Lalambda}.
	\begin{proof}[Proof of Theorem \ref{Lalambda}] Since $\la_0(d, \La)$ is a simple eigenvalue, the continuity of $\La\mapsto\la_0(d, \La)$ follows from Kato \cite[Section IV. 3.5]{kato2013perturbation} for the classical perturbation theory. Next we verify the assertion \eqref{la-1}.
		
		\textbf{Step 1.} We claim that
		\begin{equation}\label{Nge}
			\limsup_{\La\to\infty}\lambda_0(d, \La)\le\al_1.
		\end{equation}
		For any fixed small $\ep>0$, in light of \eqref{equ-ei}, consider the following auxiliary eigenvalue problem:
		\begin{equation}\label{auxNge-1}
			\begin{cases}
				w_a-dw_{xx}-\La q(x)w_x+\overline\Psi_\ep(a, x)w=-\la w, &(a, x)\in(0, a_c)\times(0, 1),\\
				w_x(a, 0)=w_x(a, 1)=0, &a\in(0, a_c),\\
				w(0, x)=\int_0^{a_c}\beta(a, x)w(a, x)da, &x\in(0, 1),
			\end{cases}
		\end{equation}
		where 
		$$
		\overline\Psi_\ep(a, x)=\mu(a, x)+\al_1+\ep.
		$$
		Denote the principal eigenvalue of \eqref{auxNge-1} by $\la_0(\overline\Psi_\ep)$.
		
		To show \eqref{Nge}, we proceed with the construction of a generalized super-solution and then apply Proposition \ref{equivalent}. That is, we want to construct a strict super-solution $\overline w$ to the problem: 
		\begin{equation}\label{auxNge}
			\begin{cases}
				w_a-dw_{xx}-\La q(x)w_x+\overline\Psi_\ep(a, x)w=0, &(a, x)\in(0, a_c)\times(0, 1),\\
				w_x(a, 0)=w_x(a, 1)=0, &a\in(0, a_c),\\
				w(0, x)=\int_0^{a_c}\beta(a, x)w(a, x)da, &x\in(0, 1).
			\end{cases}
		\end{equation}
		
		In the following discussion, for simplicity, we refer to \(\overline{w}\) as a strict generalized super-solution of \eqref{auxNge}. To the end, we rewrite \(\overline{\Psi}_\epsilon\) as follows:
		\[
		\overline{\Psi}_\epsilon(a, x) = [\mu(a, x) - \mu(a, 1) + \epsilon] + [\mu(a, 1) + \alpha_1] =: \overline{\Phi}_\epsilon(a, x) + \mu(a, 1) + \alpha_1.
		\]
		This formulation motivates us to construct a strict generalized super-solution of variable-separated form. We begin by defining a positive function \(h\) that satisfies 
		\begin{equation}\label{fL}
			\begin{cases}
				h'(a) = -\mu(a, 1)h(a) - \alpha_1 h(a),\\
				h(0) = \int_0^{a_c} \beta(a, 1)h(a) \, da.
			\end{cases}
		\end{equation}
		Note that the existence of $h$ is guaranteed by the definition of $\al_1$ in \eqref{al_1}. In fact, $h$ can be written as 
		$$
		h(a)=e^{-\int_0^a\mu(s, 1)ds}e^{-\al_1a},
		$$
		and $(\al_1, h)$ is the principal eigen-pair of the standard age-structured operator. 
		
		As part of our preparation, we select a positive constant \(\delta < \varepsilon\) and define the constants
		\[
		c_1 = c_1(\varepsilon) = \max_{[0, a_c] \times [0, 1]} \left| \overline{\Phi}_\varepsilon(a, x) \right| + 1, \quad c_2 = c_2(\varepsilon, \delta) = \min_{[0, a_c] \times [1-\delta, 1]} \overline{\Phi}_\varepsilon(a, x).
		\]
		It is important to note that \(c_1\) does not depend on the small \(\delta\), and \(c_2\) is non-increasing with respect to \(\delta \in (0, \varepsilon)\). Additionally, if \(\delta\) is sufficiently small, we have
		\begin{equation}\label{c_2}
			\frac{\varepsilon}{2} < c_2 < 2\varepsilon.
		\end{equation}
		
		Furthermore, for some sufficiently small \(\overline{\delta} > 0\), we define the positive function \(h_{\overline{\delta}}\) by the following linear differential equation problem:
		\begin{equation}\label{fL_de}
			\begin{cases}
				h'_{\overline{\delta}}(a) = -\mu(a, 1)h_{\overline{\delta}}(a) - \alpha_{\overline{\delta}} h_{\overline{\delta}}(a),\\
				h_{\overline{\delta}}(0) = \int_0^{a_c} (\beta(a, 1) + \overline{\delta}) h_{\overline{\delta}}(a) \, da,
			\end{cases}
		\end{equation}
		where the existence of \(h_{\overline{\delta}}\) is ensured by the choice of \(\alpha_{\overline{\delta}}\) which satisfies
		\begin{equation}\nonumber
			\int_0^{a_c} (\beta(a, 1) + \overline{\delta}) e^{-\int_0^a \mu(s, 1) \, ds} e^{-\alpha_{\overline{\delta}} a} \, da = 1.
		\end{equation}
		Notice that we can scale \(h(0)\) and \(h_{\overline{\delta}}(0)\) so that \(h(0) = h_{\overline{\delta}}(0) = 1\). 
		
		We now choose \(\delta\) small enough such that \(\beta(a, x) \leq \beta(a, 1) + \overline{\delta}\) for all \(x \in [1-\delta, 1]\) and the following inequality holds:
		\begin{equation}\label{al_de}
			\alpha_1 - \alpha_{\overline{\delta}} > -\frac{\varepsilon}{2}.
		\end{equation}
		
		To satisfy the nonlocal initial condition on \(a\), we define a function \(\overline{w}\) as follows:
		\begin{equation}\label{large_advec_super_sol}
			\overline{w}(a, x) = 
			\begin{cases}
				h_{\overline{\delta}}(a) \overline{z}_1(x) = e^{-(\alpha_{\overline{\delta}} - \alpha_1)a} h(a) \overline{z}_1(x), & x \in [1-\delta, 1],\\
				\overline{w}_2(a, x), & x \in [\delta, 1-\delta],\\
				e^{-\gamma a} h(a) \overline{z}_3(x), & x \in [0, \delta],
			\end{cases}
		\end{equation}
		where \(\gamma > 0\) is a constant such that
		\begin{equation}\label{choice_of_gamma}
			\int_0^{a_c} \beta(a, x) e^{-\gamma a} h(a) \, da \leq h(0)=1, \quad \forall x \in [0, 1].
		\end{equation}
		Here, \(\overline{z}_1\) and \(\overline{z}_3\) are given by
		\begin{equation}\nonumber
			\overline{z}_1(x) = M_1 + (x - 1)^2, \quad \overline{z}_3(x) = M_2 (1 - M_2 x^2)
		\end{equation}
		with \(M_1\) and \(M_2\) being two positive constants to be determined later. Observe that \(\overline{z}_1\) and \(\overline{z}_3\) are strictly decreasing in \([1-\delta, 1]\) and \([0, \delta]\), respectively. Furthermore, we have
		\[
		\overline{z}_1'(1-\delta) = -2\delta < 0 \quad \text{and} \quad \overline{z}_3'(\delta) = -2M_2^2\delta < 0.
		\]
		
		Then we choose the constants \(M_1\) and \(M_2\) as 
		\begin{equation}\nonumber
			M_1 = \frac{2d}{c_2 - \alpha_{\overline{\delta}} + \alpha_1} > 0 \quad \text{(due to \eqref{c_2} and \eqref{al_de})}, \quad M_2 = \frac{1}{2\delta^2}.
		\end{equation}
		We then require that \(\frac{1}{8\delta^2} > \frac{2d}{c_2 - \alpha_{\overline{\delta}} + \alpha_1}\) by choosing a smaller \(\delta\) if necessary. Consequently, we have
		\[
		\overline{z}_1(1-\delta) = \frac{2d}{c_2 - \alpha_{\overline{\delta}} + \alpha_1} + \delta^2 < \frac{1}{4\delta^2} = \overline{z}_3(\delta)
		\]
		if \(\delta\) also satisfies \(\delta<8^{-1/4}\).
		One can further select an even smaller \(\delta > 0\) such that
		\[
		M_2 = \frac{1}{2\delta^2} \ge \frac{c_1 + \gamma}{2d}.
		\]
		
		Moreover, since  \(\gamma > 0\) can be chosen larger if necessary such that \(-\gamma - \alpha_1 - \mu(a, 1) < 0\), it is possible to find a function \(\overline{w}_2\) satisfying
		\begin{equation}\label{partial_a_w_2}
			\partial_a \overline{w}_2 > -\left(2\gamma + \alpha_1 + \mu(a, 1)\right)\overline{w}_2 \quad \text{on } [0, a_c] \times [\delta, 1-\delta].
		\end{equation}
		
		We can choose \(\overline{w}_2 \in C([0, a_c] \times [\delta, 1-\delta])\) that possesses the following properties:
		\begin{equation}\label{partial_x_bar_w_2}
			\begin{cases}
				\partial_x \overline{w}_2 \le -\tilde{c}_4(\delta) & \text{on } [0, a_c] \times [\delta, 1-\delta], \\
				\partial_{xx} \overline{w}_2 \le \tilde{c}_3(\delta) & \text{on } [0, a_c] \times [\delta, 1-\delta], \\
				\int_0^{a_c} \beta(a, \cdot) \overline{w}_2(a, \cdot) \, da \le \overline{w}_2(0, \cdot) & \text{on } [\delta, 1-\delta],
			\end{cases}
		\end{equation}
		and for any \(a \in [0, a_c]\), the following boundary conditions hold:
		\[
		\begin{cases}
			\overline{w}_2(a, \delta) = \overline{z}_3(\delta) e^{-\gamma a} h(a), \\
			\overline{w}_2(a, 1-\delta) = \overline{z}_1(1-\delta) h_{\overline{\delta}}(a), \\
			\partial_x \overline{w}_2(a, \delta^+) < \partial_x \overline{w}_2(a, \delta^-) = -2M_2^2 \delta e^{-\gamma a} h(a), \\
			-2\delta h_{\overline{\delta}}(a) = \partial_x \overline{w}_2(a, (1-\delta)^+) < \partial_x \overline{w}_2(a, (1-\delta)^-).
		\end{cases}
		\]
		Here, the positive constants \(\tilde{c}_3 = \tilde{c}_3(\delta)\) and \(\tilde{c}_4 = \tilde{c}_4(\delta)\) are independent of \(\Lambda\).
		With a fixed \(a\), an illustration of the profile of \(\overline{w}\) can be found in the appendix, as shown in Figure \ref{overline w}.
		
		Next, let us verify that \(\overline{w}\) is indeed a strict generalized super-solution of problem \eqref{auxNge}. For convenience, we introduce the operator
		\[
		\overline{\mathcal{L}}w := w_a - d w_{xx} - \Lambda q w_x + \overline{\Psi}_\epsilon w.
		\]
		To check that \(\overline{w}\) is a strict and positive generalized super-solution of \eqref{auxNge}, we need to distinguish three cases depending on the value of \(x\).  
		
		\textbf{Case 1.1: $x\in[1-\de, 1]$}. We start by computing $(\overline{\mathcal L}\overline w)(a, x)$. By the choice of $M_1$, there holds
		\begin{eqnarray}\label{case1}
			(\overline{\mathcal L}\overline w)(a, x)&=&h_{\overline\de}(a)\left[-\al_{\overline\de}\overline z_1(x)-d\overline z''_1(x)-\La q(x)\overline z'_1(x)+\overline\Phi_\ep(a, x)\overline z_1(x)+\al_1\overline z_1(x)\right]\nonumber\\
			&=&h_{\overline\de}(a)\left[-\al_{\overline\de}\overline z_1(x)-2d-2\La q(x)(x-1)+\overline\Phi_\ep(a, x)\overline z_1(x)+\al_1\overline z_1(x)\right]\nonumber\\
			&\ge&h_{\overline\de}(a)\left[-2d+(c_2-\al_{\overline\de}+\al_1)(M_1+(x-1)^2)\right]\nonumber\\
			&\ge&0.
		\end{eqnarray}
		In addition, one has $\overline w_x(a, 1)=h_{\overline\de}(a)\overline z'_{1}(1)=0$ and also by \eqref{fL_de}, 
		\begin{eqnarray}\label{case22}
			\int_0^{a_c}\beta(a, x)\overline w(a, x)da&=&\overline z_1(x)\int_0^{a_c}\beta(a, x)h_{\overline\de}(a)da\nonumber\\
			&\le& \overline z_1(x)\int_0^{a_c}(\be(a, 1)+\overline\de)h_{\overline\de}(a)da\nonumber\\
			&=& h_{\overline\de}(0)\overline z_1(x)=\overline w(0, x).
		\end{eqnarray}

		\textbf{Case 1.2: $x\in[0, \de]$}. By the choice of $M_2$, it is easily seen that
		\begin{eqnarray}\label{case3}
			(\overline{\mathcal L}\overline w)(a, x)&=&-\ga\overline w+e^{-\ga a}h(a)\left[-d\overline z''_3(x)-\La q(x)\overline z'_3(x)+\overline\Phi_\ep(a, x)\overline z_3(x)\right]\nonumber\\
			&=&e^{-\ga a}h(a)\left[-\ga\overline z_3+2dM_2^2+2\La q(x)M_2^2x+\overline\Phi_\ep(a, x)\overline z_3(x)\right]\nonumber\\
			&\ge&e^{-\ga a}h(a)\left[2dM_2^2-(c_1+\ga)M_2(1-M_2x^2)\right]\nonumber\\
			&\ge&0.
		\end{eqnarray}
		Moreover, one has
		$$
		\overline w_x(a, 0)=e^{-\ga a}h(a)\overline z'_3(0)=0,
		$$
		and due to \eqref{choice_of_gamma}, there holds
		\begin{eqnarray}\label{ga}
			\int_0^{a_c}\beta(a, x)\overline w(a, x)da=\overline z_3(x)\int_0^{a_c}\beta(a, x)e^{-\ga a}h(a)da\le h(0)\overline z_3(x)=\overline w(0, x).
		\end{eqnarray}
		
		\textbf{Case 1.3: $x\in[\de, 1-\de]$}. We first note that $q_{\min}:=\min_{x\in[0, 1]}q(x)>0$. In view of \eqref{partial_a_w_2} and the monotonicity of
		$\overline w$ in $x\in[0,1]$, direct calculation yields 
		\begin{eqnarray}\label{case4}
			&&(\overline{\mathcal L}\overline w)(a, x)\nonumber\\
			&=&\partial_a\overline w_2(a, x)-d\partial_{xx}\overline w_2(a, x)-\La q(x)\partial_x\overline w_2(a, x)+\overline\Psi_\ep(a, x)\overline w_2(a, x)\nonumber\\
			&\ge&-d\tilde c_3+\La q_{\min}\tilde c_4-(2\ga+c_1)M_2(1-M_2\de^2)\max_{a\in[0, a_c]}h(a)\nonumber\\
			&\ge&0
		\end{eqnarray}
		provided that $\La$ is sufficiently large such that
		$$
		\La\ge \frac{d\tilde c_3+(2\ga+c_1)M_2(1-M_2\de^2)\max_{a\in[0, a_c]}h(a)}{q_{\min}\tilde c_4}.
		$$ 
		
		As a result, considering equations \eqref{case1}-\eqref{case4} and \eqref{partial_x_bar_w_2}, for sufficiently large $\La$, the previously selected $\overline{w}$ serves as the desired generalized super-solution for \eqref{auxNge} with $\mathbb{X}=\{\de, 1-\de\}$. By applying Proposition \ref{equivalent} to \eqref{auxNge}, it follows immediately that $\la_0(\overline\Psi_\ep) \le 0$. Consequently, we have
		$$
		\lambda_0(d, \La) \le \alpha_1 + \epsilon \quad \text{for all sufficiently large } \La.
		$$
		This implies \eqref{Nge} by letting $\La \to \infty$ due to the arbitrariness of $\epsilon$.      
		
		{\bf Step 2.} In this step, we aim to show
		\begin{equation}\label{Nle}
			\liminf_{\La\to\infty}\lambda_0(d, \La)\ge\al_1.
		\end{equation}
		We will employ a strategy similar to that used in {\bf Step 1}. For any fixed small \(\epsilon > 0\), again in light of \eqref{equ-ei}, let us consider the following auxiliary eigenvalue problem:
		\begin{equation}\label{auxNle}
			\begin{cases}
				w_a - dw_{xx} - \Lambda q(x) w_x + \underline{\Psi}_\epsilon(a, x) w = -\lambda w, & (a, x) \in (0, a_c) \times (0, 1), \\
				w_x(a, 0) = w_x(a, 1) = 0, & a \in (0, a_c), \\
				w(0, x) = \int_0^{a_c} \beta(a, x) w(a, x) \, da, & x \in (0, 1),
			\end{cases}
		\end{equation}
		where 
		\[
		\underline{\Psi}_\epsilon(a, x) = \mu(a, x) + \alpha_1 - \epsilon = [\mu(a, x) - \mu(a, 1) - \epsilon] + [\mu(a, 1) + \alpha_1] =: \underline{\Phi}_\epsilon(a, x) + \mu(a, 1) + \alpha_1.
		\]
		
		Let \(\lambda_0(\underline{\Psi}_\epsilon)\) denote the principal eigenvalue of \eqref{auxNle}. We choose a small constant \(0 < \delta < \epsilon\) and define
		\[
		\tilde{c}_1 = \tilde{c}_1(\epsilon) = \max_{[0, a_c] \times [0, 1]} |\underline{\Phi}_\epsilon(a, x)| + 1, \quad \tilde{c}_2 = \tilde{c}_2(\epsilon, \delta) = \max_{[0, a_c] \times [1-\delta, 1]} \underline{\Phi}_\epsilon(a, x).
		\]
		Note that \(\tilde{c}_1\) does not depend on the small \(\delta\). Furthermore, one can require \(\delta\) to be sufficiently small such that
		\begin{equation}\label{tilde c_2}
			-2\epsilon < \tilde{c}_2 < -\frac{1}{2}\epsilon.
		\end{equation}
		
		On the other hand, for some sufficiently small \(\underline{\delta} > 0\), we define a positive function \(h_{\underline{\delta}}\) through
		\begin{equation}\label{h_1}
			\begin{cases}
				h'_{\underline{\delta}}(a) = -\mu(a, 1) h_{\underline{\delta}}(a) - \alpha_{\underline{\delta}} h_{\underline{\delta}}(a),\\
				h_{\underline{\delta}}(0) = \int_0^{a_c} (\beta(a, 1) - \underline{\delta}) h_{\underline{\delta}}(a) \, da,
			\end{cases}
		\end{equation}
		where the existence of \(h_{\underline{\delta}}\) is guaranteed by the choice of \(\alpha_{\underline{\delta}}\) which fulfills
		\begin{equation}\nonumber
			\int_0^{a_c} (\beta(a, 1) - \underline{\delta}) e^{-\int_0^a \mu(s, 1) \, ds} e^{-\alpha_{\underline{\delta}} a} \, da = 1.
		\end{equation}
		We can select \(\delta > 0\) small enough such that
		\begin{equation}\label{choice of de}
			\beta(a, x) \ge \beta(a, 1) - \underline{\delta}, \quad \text{for all } x \in [1-\delta, 1],
		\end{equation}
		as well as 
		\begin{equation}\label{al_de sub}
			0 < \alpha_1 - \alpha_{\underline{\delta}} < \frac{\epsilon}{2}.
		\end{equation}
		
		To establish \eqref{Nle}, it suffices to construct a strictly positive generalized sub-solution \(\underline{w}\) for the following problem 
		\begin{equation}\label{auxNle-a}
			\begin{cases}
				w_a - dw_{xx} - \Lambda q(x) w_x + \underline{\Psi}_\epsilon(a, x) w =0, & (a, x) \in (0, a_c) \times (0, 1), \\
				w_x(a, 0) = w_x(a, 1) = 0, & a \in (0, a_c), \\
				w(0, x) = \int_0^{a_c} \beta(a, x) w(a, x) \, da, & x \in (0, 1),
			\end{cases}
		\end{equation}
		and then apply Corollary \ref{corsub}. To this end, we define a function \(\underline{z}(x)\) as
		\begin{equation}\nonumber%\label{under z_1}
			\underline{z}(x) = 
			\begin{cases}
				\underline{z}_1(x) := \tilde{M}_1 - (x-1)^2, & \text{if } x \in [1-\delta/2, 1], \\
				\underline{z}_2(x), & \text{if } x \in (1-\delta, 1-\delta/2), \\
				0, & \text{if } x \in [0, 1-\delta],
			\end{cases}
		\end{equation}
		where \(\tilde{M}_1 > 0\) is a constant to be determined later. We choose \(\underline{z}_2 \in C([1-\delta, 1-\delta/2])\) to fulfill the following properties:
		\[
		\begin{cases}
			\underline{z}'_2(x) > \underline{c}_4(\delta), \quad |\underline{z}''_2(x)| \le \underline{c}_3(\delta), & x \in (1-\delta, 1-\delta/2), \\
			\underline{z}_2(1-\delta) = 0, \quad \underline{z}_2(1-\delta/2) = \tilde{M}_1 - \delta^2/4, \\
			\underline{z}'_2(1-\delta/2)^- < \delta = \underline{z}'_2(1-\delta/2)^+,
		\end{cases}
		\]
		where \(\underline{c}_3 = \underline{c}_3(\delta)\) and \(\underline{c}_4 = \underline{c}_4(\delta)\) are two positive constants independent of \(\Lambda\).
		
		Now, set \(\underline{w} = h_{\underline{\delta}} \underline{z}\) and let us verify that \(\underline{w}\) is indeed a strict generalized sub-solution of \eqref{auxNle-a}. For convenience, we introduce the operator
		\[
		\underline{\mathcal{L}}w := w_a - dw_{xx} - \Lambda q w_x + \underline{\Psi}_\epsilon w.
		\]
		Clearly, if $x\in [0, 1-\delta]$, $\underline{\mathcal{L}}(\underline w)(a, x)=0$. In the following, we check $x\in[1-\de/2, 1]$ and $x\in(1-\de, 1-\de/2)$, separately.
		
		\textbf{Case 2.1:  $x\in[1-\de/2, 1]$}. In this region, in view of \eqref{tilde c_2} and \eqref{al_de sub}, elementary computation gives
		\begin{eqnarray}\label{case2.1-1 sub}
			(\underline{\mathcal L}\,\underline w)(a, x)&=&h_{\underline\de}(a)\left[-\al_{\underline\de}\underline z_1(x)+2d+2\La q(x)(x-1)+(\underline\Phi_\ep(a, x)+\al_1)\underline z_1(x)\right]\nonumber\\
			&\le&h_{\underline\de}(a)\left[2d+(\tilde c_2+\al_1-\al_{\underline\de})(\tilde M_1-\de^2/4)\right]\nonumber\\
			&\le&0
		\end{eqnarray}
		provided $\tilde M_1=\frac{2d}{\al_{\underline\de}-\al_1-\tilde c_2}+\de^2/4>0$. 
		
		\textbf{Case 2.2:  $x\in(1-\de, 1-\de/2)$}. In this region, we have 
		\begin{eqnarray}\label{case2.1-2 sub}
			(\underline{\mathcal L}\,\underline w)(a, x)&=&h_{\underline\de}(a)\left[-\al_{\underline\de}\underline z_2(x)-d\underline z''_2(x)-\La q(x)\underline z'_2(x)+(\underline\Phi_\ep(a, x)+\al_1)\underline z_2(x)\right]\nonumber\\
			&\le&h_{\underline\de}(a)(d\underline c_3-\La q_{\min}\underline c_4)\nonumber\\
			&\le&0
		\end{eqnarray}
		provided that 
		$$
		\La\ge\frac{d\underline c_3}{q_{\min}\underline c_4}.
		$$
		
		On the other hand, one has $\underline w_x(a, 0)=\underline w_x(a, 1)=0$ and on $[1-\de, 1]$,
		\begin{equation}\label{underline w(0)}
			\int_0^{a_c}\beta(a, x)\underline w(a, x)da=\underline z(x)\int_0^{a_c}\beta(a, x)h_{\underline\de}(a)da\ge\underline z(x)\int_0^{a_c}(\beta(a, 1)-\underline\de)h_{\underline\de}(a)da=\underline w(0, x)
		\end{equation}
		due to \eqref{choice of de} and the second equality in \eqref{h_1}. 
		
		As a consequence, in view of \eqref{case2.1-1 sub}-\eqref{underline w(0)} for $\La$ large enough, the above chosen $\underline w$ is a desired sub-solution of \eqref{auxNle-a} with $\mathbb{X}=\{1-\de/2, 1-\de\}$. By Corollary \ref{corsub}, as applied to \eqref{auxNle}, we deduce that $\la_0(\underline\Psi_\ep)\ge0$, and so
		$$
		\lambda_0(d, \La)\ge\al_1-\ep \ \ \text{ for all large }\La,
		$$
		which implies \eqref{Nle} due to the arbitrariness of $\ep$. Thus, the first assertion in \eqref{la-1} is established.
		
		Finally, the second assertion in \eqref{la-1} as \(\Lambda \to -\infty\) can be directly derived from the case where \(\Lambda \to \infty\) by applying the variable change \(y = -x\) to problem \eqref{linear}. 
		Thus, the proof is complete. \end{proof}

	\subsection{Limiting behavior for small diffusion with advection}
	This subsection is devoted to the proof of Theorem \ref{Dlambda}.  	
	
	\begin{proof}[Proof of Theorem \ref{Dlambda}]
		Since $\la_0(d, \La)$ is a simple eigenvalue, the continuity of $d\mapsto \la_0(d, \La)$ follows from Kato \cite[Section IV. 3.5]{kato2013perturbation} for the classical perturbation theory. 
		
		First of all, as in the proof of Theorem \ref{Lalambda}, we just need to verify the assertion in \eqref{small-d} for the case of $\La > 0$. From now on, we fix $\La > 0$. Our proof consists of two main steps. In the following, for the sake of convenience, we shall use the same notations as in the proof of Theorem \ref{Lalambda}.
		
		\textbf{Step 1.} We first prove
		\begin{equation}\label{Nge small d}
			\limsup_{d\to0}\lambda_0(d, \La)\le\al_1.
		\end{equation}
		To show \eqref{Nge small d}, it is sufficient to prove that, for fixed $\ep>0$, 
		\begin{equation}\label{<0 small d}
			\la_0(\overline\Psi_\ep)<0\ \ \text{ for all small $d$}.
		\end{equation}
		
		We are going to construct a positive function $\overline{w}$ in a separated form, which serves as a generalized super-solution of \eqref{auxNge}. To achieve this, let us define 
		\begin{equation}\label{small d super-sol}
			\overline{w}(a, x) = 
			\begin{cases}
				h_{\overline{\delta}}(a)e^{\overline{v}_1(x)}, & x \in [1-\delta, 1], \\
				e^{\overline{v}_2(a, x)}, & x \in [\delta, 1-\delta], \\
				e^{-\gamma a}h(a)e^{\overline{v}_3(x)}, & x \in [0, \delta],
			\end{cases}
		\end{equation}
		where $\gamma > 0$ is chosen such that \eqref{choice_of_gamma} holds, and $h$ and $h_{\overline{\delta}}$ are defined in \eqref{fL} and \eqref{fL_de}, respectively. Here, $\overline{v}_1$ and $\overline{v}_3$ are given by
		\begin{equation}\nonumber
			\overline{v}_1(x) = \frac{1}{\delta^2}(x-1)^2, \quad \overline{v}_3(x) = \frac{1}{\delta}e^{-\frac{1}{\delta}x},
		\end{equation}
		and the function $\overline{v}_2$ will be determined later.
		
		Observe that $\overline{v}_1$ and $\overline{v}_3$ are strictly decreasing in $[1-\delta, 1]$ and $[0, \delta]$, respectively. Furthermore, we have
		\[
		-\frac{1}{e\delta^2} = \overline{v}'_3(\delta) < \overline{v}'_1(1-\delta) = -\frac{2}{\delta} < 0, \quad \overline{v}_3(\delta) = \frac{1}{e\delta} > 1 = \overline{v}_1(1-\delta),
		\]
		and
		\[
		\frac{\overline{v}_1(1-\delta) - \overline{v}_3(\delta)}{(1-\delta) - \delta} \approx -\frac{1}{e\delta}.
		\]
		
		In light of the choices of $\overline{v}_1$ and $\overline{v}_3$, we then select $\overline{v}_2 \in C([0, a_c] \times [0, 1])$ with the following properties (refer to \eqref{partial_a_w_2} and \eqref{partial_x_bar_w_2}):
		\begin{equation}\label{partial_x bar e w_2}
			\begin{cases}
				-c_5(\delta) \leq \partial_x \overline{v}_2 \leq -c_4 \delta^{-1/2} & \text{on } [0, a_c] \times [\delta, 1-\delta], \\
				\partial_{xx} \overline{v}_2 \leq c_3(\delta) & \text{on } [0, a_c] \times [\delta, 1-\delta], \\
				\partial_a \overline{v}_2 \geq -(2\gamma + \alpha_1 + \mu(a, 1)) & \text{on } [0, a_c] \times [\delta, 1-\delta], \\
				\int_0^{a_c} \beta(a, \cdot) e^{\overline{v}_2(a, \cdot)} \, da \leq e^{\overline{v}_2(0, \cdot)} & \text{on } [\delta, 1-\delta],
			\end{cases}
		\end{equation}
		and for any \(a \in [0, a_c]\),
		\[
		\begin{cases}
			e^{\overline{v}_2(a, \delta)} = e^{\overline{v}_3(\delta)} e^{-\gamma a} h(a), \\
			e^{\overline{v}_2(a, 1-\delta)} = e^{\overline{v}_1(1-\delta)} h_{\overline{\delta}}(a), \\
			\partial_x \overline{v}_2(a, \delta^+) < \partial_x \overline{v}_2(a, \delta^-) = -\frac{1}{e\delta^2}, \\
			-\frac{2}{\delta} = \partial_x \overline{v}_2(a, (1-\delta)^+) < \partial_x \overline{v}_2(a, (1-\delta)^-),
		\end{cases}
		\]
		where the positive constants \(c_3 = c_3(\delta)\) and \(c_5 = c_5(\delta)\) are independent of \(d\), and the positive constant \(c_4\) is independent of both \(d\) and \(\delta\). With a fixed \(a\),  the profile of \(\overline{w}\) here is similar to that in Step 1 in the proof of Theorem \ref{Lalambda}, as shown in Figure \ref{overline w}.
		
		Next we further choose $\de>0$ such that
		\begin{equation}\label{de}
			0<\de<\min\left\{\ep,\ \ \sqrt{\frac{\La q_{\min}}{e(c_1+\ga)}},\ \ \left(\frac{\La  q_{\min}c_4}{c_1+2\ga}\right)^2\right\}.
		\end{equation}
		Setting $\overline z_i=e^{\overline v_i}$, we have
		$$
		\overline z'_i=\overline z_i\overline v'_i,\ \ \; \overline z''_i=\overline z_i[(\overline v'_i)^2+\overline v''_i],\ \text{ for }\ i=1, 3.
		$$
		Thus $\overline z_1$, $\overline z_2$ and $\overline z_3$ are also decreasing, respectively, on $[0, \de]$, $[\de,1-\de]$ and $[1-\de,1]$, and $\overline z'_3(0)<0,\ \overline z'_1(1)=0$. In addition, there holds
		$$
		\overline{\mathcal L}\overline z_i=\overline z_i\{-d[(\overline v'_i)^2+\overline v''_i]-\La q\overline v'_i+\overline\Phi_\ep+\mu(\cdot, 1)+\al_1\},\ \ i=1,3.
		$$
		
		In the sequel, we will verify that $\overline w$ defined by \eqref{small d super-sol} is indeed a strict generalized super-solution of \eqref{auxNge}.
		
		\textbf{Case 1.1: $x\in[1-\de, 1]$}.  By direct computation we have
		\begin{eqnarray}\label{case1 small d}
			(\overline{\mathcal L}\overline w)(a, x)&=&h_{\overline\de}(a)\overline z_1(x)\left[\al_1-\al_{\overline\de}-d\left(\frac{4}{\de^4}(x-1)^2+\frac{2}{\de^2}\right)-\frac{2\La q(x)}{\de^2}(x-1)+\overline\Phi_\ep(a, x)\right]\nonumber\\
			&\ge&h_{\overline\de}(a)\overline z_1(x)\left[-\frac{6d}{\de^2}+c_2+\al_1-\al_{\overline\de}\right]\nonumber\\
			&\ge&0
		\end{eqnarray}
		provided that 
		$$
		0<d\le \frac{(c_2+\al_1-\al_{\overline\de})\de^2}{6},
		$$
		and  the same analysis as in \eqref{case22}  gives
		\begin{eqnarray}\label{case22 small d}
			\int_0^{a_c}\beta(a, x)\overline w(a, x)da=\overline z_1(x)\int_0^{a_c}\beta(a, x)h_{\overline\de}(a)da\le\overline w(0, x).\nonumber
		\end{eqnarray}
		
		\textbf{Case 1.2: $x\in[0, \de]$}. In this region, we have
		\begin{eqnarray}\nonumber%\label{small d}
			(\overline{\mathcal L}\overline w)(a, x)&=&-\ga\overline w+e^{-\ga a}h(a)\overline z_3(x)\left[-d\frac{1}{\de^2}\overline v_3(1+\overline v_3)+\frac{\La}{\de} q(x)\overline v_3(x)+\overline\Phi_\ep(a, x)\right]\nonumber\\
			&\ge&e^{-\ga a}h(a)\overline z_3(x)\left[-\frac{1+\de}{\de^4}d+\frac{\La q_{\min}}{e\de^2}-(\ga+c_1)\right]\nonumber\\
			&\ge&0\nonumber
		\end{eqnarray}
		provided that
		$$
		0<d\le\frac{\La q_{\min}-e(c_1+\ga)\de^2}{e(1+\de)}\de^2\ \ \ (\mbox{due to the choice of}\, \de;\ \mbox{see}\ \eqref{de}).
		$$
		And thanks to \eqref{choice_of_gamma}, there holds
		\begin{eqnarray}\nonumber
			\int_0^{a_c}\beta(a, x)\overline w(a, x)da=\overline z_3(x)\int_0^{a_c}\beta(a, x)e^{-\ga a}h(a)da\le h(0)\overline z_3(x)=\overline w(0, x).
		\end{eqnarray}
		
		\textbf{Case 1.3:  $x\in[\de, 1-\de]$}. Direct calculations yields 
		\begin{eqnarray}\nonumber%\label{case4 small d}
			(\overline{\mathcal L}\overline w)(a, x)&=&e^{\overline v_2(a, x)}\left[\partial_a\overline v_2(a, x)-d\partial_{xx}\overline v_2(a, x)-d(\partial_x\overline v_2(a, x))^2-\La q(x)\partial_x\overline v_2(a, x)+\overline\Psi_\ep(a, x)\right]\nonumber\\
			&\ge&e^{\overline v_2(a, x)}\left[-d((c_5)^2+c_3)+c_4\La q_{\min}\de^{-1/2}-c_1-2\ga\right]\nonumber\\
			&\ge&0\nonumber
		\end{eqnarray}
		provided that $d$ satisfies 
		\begin{eqnarray}
			0<d<\frac{c_4\La q_{\min}\de^{-1/2}-c_1-2\ga}{(c_5)^2+c_3}\nonumber
		\end{eqnarray}
		due to the choice of $\de$ in \eqref{de}. By \eqref{partial_x bar e w_2}, we also have 
		\begin{eqnarray}\label{case44 small d}
			\int_0^{a_c}\beta(a, x)\overline w(a, x)da\le\overline w(0, x).
		\end{eqnarray}
		
		As a consequence, from \eqref{case1 small d} to \eqref{case44 small d}, it follows that when
		\[
		0<d < \min\left\{\frac{(c_2 + \alpha_1 - \alpha_{\overline{\delta}}) \delta^2}{6}, \; \frac{\Lambda q_{\min} - e(c_1 + \gamma) \delta^2}{e(1 + \delta)} \delta^2, \; \frac{c_4 \Lambda q_{\min} \delta^{-1/2} - c_1 - 2\gamma}{(c_5)^2 + c_3}\right\},
		\]
		the chosen function $\overline{w}$ satisfies \eqref{auxNge}. Hence, $\overline{w}$ is the desired generalized super-solution with $\mathbb{X} = \{\delta, 1-\delta\}$. Therefore, by Proposition \ref{equivalent}, \eqref{<0 small d} holds, and thus \eqref{Nge small d} is also valid.

		\textbf{Step 2.} In this step, we are going to show
		\begin{equation}\label{Nle small d}
			\liminf_{d\to0}\lambda_0(d, \La)\ge\al_1.
		\end{equation}
		We will employ the similar strategy to that in {\textbf {Step 1}} above. For any fixed small $\ep>0$, let us recall the auxiliary eigenvalue problem \eqref{auxNle}. Then, we proceed to construct a positive generalized  sub-solution to \eqref{auxNle-a} of the following form:
		\begin{equation}\label{small d sub-sol}
			\underline w(a, x)=\begin{cases}
				h_{\underline\de}(a)e^{\underline v_1(x)}, &x\in[1-\de, 1],\\
				e^{\underline v_2(a, x)}, &x\in[\de, 1-\de],\\
				e^{\hat\ga a}h(a)e^{\underline v_3(x)}, &x\in[0, \de],
			\end{cases}
		\end{equation} 
		where $h_{\underline \de}$ is defined in \eqref{h_1} and $\hat\ga>0$ is chosen such that
		\begin{equation}\label{choice of hat ga}
			\int_0^{a_c}\be(a, x)e^{\hat\ga a}h(a)da\ge h(0),\ \;\forall x\in[0, 1].
		\end{equation}
		Here $\underline v_1$ and $\underline v_3$ are given by
		\begin{equation}\nonumber
			\underline v_1(x)=\frac{1}{\de^2}e^{-\frac{1}{\de^2}(x-1)^2},\ \ \; \underline v_3(x)=\frac{1}{\de}(x+1).
		\end{equation} 
		Clearly, $\underline v_1$ and $\underline v_3$ are strictly increasing in $[1-\de, 1]$ and $[0, \de]$ respectively. Furthermore,
		$$
		\frac{1}{\de}=\underline v'_3(\de)<\underline v'_1(1-\de)=\frac2{e\de^3}, \quad \underline v_3(\de)=\frac{1+\de}{\de}<\frac{1}{e\de^2}=\underline v_1(1-\de),
		$$
		and
		$$
		\frac{\underline v_1(1-\de)-\underline v_3(\de)}{(1-\de)-\de}\approx\frac{1}{e\de^2}.
		$$
		
		By requiring \(\hat{\gamma} > 0\)  to be larger (if necessary) such that \(\hat{\gamma} - \alpha_1 - \mu(a, 1) > 0\), we can always find a solution \(\underline{v}_2\) such that
		\begin{equation}\nonumber%\label{partial_a v_2}
			\partial_a \underline{v}_2 < 2\hat{\gamma} - \alpha_1 - \mu(a, 1) \quad \text{on } [0, a_c] \times [\delta, 1-\delta].
		\end{equation}
		Therefore, we can choose \(\underline{v}_2(a, x) \in C([0, a_c] \times [0, 1])\) with the properties
		\begin{equation}\label{v2}
			\begin{cases}
				\partial_x \underline{v}_2 \geq \widehat{c}_4 \delta^{-1/2} & \text{on } [0, a_c] \times [\delta, 1-\delta], \\
				|\partial_{xx} \underline{v}_2| \leq \widehat{c}_3(\delta) & \text{on } [0, a_c] \times [\delta, 1-\delta], \\
				\int_0^{a_c} \beta(a, \cdot) e^{\underline{v}_2(a, \cdot)} \, da \geq e^{\underline{v}_2(0, \cdot)} & \text{on } [\delta, 1-\delta],
			\end{cases}
		\end{equation}
		and for any \(a \in [0, a_c]\),
		\[
		\begin{cases}
			e^{\underline{v}_2(a, \delta)} = e^{\underline{v}_3(\delta)} e^{\hat{\gamma} a} h(a), \\
			e^{\underline{v}_2(a, 1-\delta)} = e^{\underline{v}_1(1-\delta)} h_{\underline{\delta}}(a), \\
			\partial_x \underline{v}_2(a, \delta^+) > \partial_x \underline{v}_2(a, \delta^-) = \frac{1}{\delta}, \\
			\frac{2}{e \delta^3} = \partial_x \underline{v}_2(a, (1-\delta)^+) > \partial_x \underline{v}_2(a, (1-\delta)^-),
		\end{cases}
		\]
		where the positive constant \(\widehat{c}_3(\delta)\) is independent of \(d\), and the positive constant \(\widehat{c}_4\) is independent of both \(d\) and \(\delta\). Given \(a\), the profile of \(\underline{w}\) is illustrated in Figure \ref{underline w} of the appendix.
		
		Next, we pick \(\delta > 0\) such that
		\begin{equation}\label{de small d}
			0 < \delta < \min\left\{\epsilon, \; \frac{\Lambda q_{\min}}{\tilde{c}_1 + \hat{\gamma}}, \; \left(\frac{\Lambda q_{\min} \widehat{c}_4}{\tilde{c}_1 + 2\hat{\gamma}}\right)^2\right\}.
		\end{equation}
		As before, by setting \(\underline{z}_i = e^{\underline{v}_i}\), we have
		\[
		\underline{z}'_i = \underline{z}_i \underline{v}'_i, \quad \underline{z}''_i = \underline{z}_i [(\underline{v}'_i)^2 + \underline{v}''_i], \quad \text{for } i = 1, 3.
		\]
		Thus, \(\underline{z}_i\) is also increasing and \(\underline{z}'_3(0) > 0,\ \underline{z}'_1(1) = 0\). In addition, we have
		\[
		\underline{\mathcal{L}}\, \underline{z}_i = \underline{z}_i \left\{-d [(\underline{v}'_i)^2 + \underline{v}''_i] - \Lambda q \underline{v}'_i + \underline{\Phi}_\epsilon + \mu(\cdot, 1) + \alpha_1\right\},\ i=1,3.
		\]
		
		Now, let us verify that \(\underline{w}\) is indeed a strict positive generalized sub-solution of \eqref{auxNle-a}.
		
		{\bf Case 2.1: \(x \in [1-\delta, 1]\)}. By means of \eqref{tilde c_2} and \eqref{al_de sub}, it easily follows that
		\begin{eqnarray}\label{case1sub small d}
			&&(\underline{\mathcal L}\, \underline w)(a, x)\nonumber\\
			&=&h_{\underline\de}(a)\overline z_1(x)\left\{-d\left[\frac{4(x-1)^2}{\de^4}\underline v_1^2+\left(\frac{4(x-1)^2}{\de^4}-\frac{2}{\de^2}\right)\underline v_1\right]+\frac{2(x-1)}{\de^2}\La q(x)\underline v_1(x)+\underline\Phi_\ep(a, x)+\al_1-\al_{\underline\de}\right\}\nonumber\\
			&\le&h_{\underline\de}(a)\overline z_1(x)\left[\frac{2d}{\de^4}+\tilde c_2+\al_1-\al_{\underline\de}\right]\nonumber\\
			&\le&0
		\end{eqnarray}
		provided that
		\[
		0<d \leq \frac{\delta^4 (\alpha_{\underline{\delta}} - \tilde{c}_2 - \alpha_1)}{2}.
		\]

		{\bf Case 2.2:  \(x \in [0, \delta]\)}. In this region, we have
		\begin{align}\label{case3sub small}
			(\underline{\mathcal{L}}\, \underline{w})(a, x) &= \hat{\gamma} \underline{w} + e^{\hat{\gamma} a} h(a) \underline{z}_3(x) \left[-\frac{1}{\delta^2} d - \frac{\Lambda}{\delta} q + \underline{\Phi}_\epsilon(a, x) \right] \nonumber \\
			&\leq e^{\hat{\gamma} a} h(a) \underline{z}_3(x) \left[-\frac{\Lambda}{\delta} q_{\min} + \hat{\gamma} + \tilde{c}_1 \right] \nonumber \\
			&\leq 0
		\end{align}
		due to the choice of \(\delta\) in \eqref{de small d}. 
		
		On the other hand, by \eqref{choice of hat ga}, we have
		\begin{equation}\label{gasub small}
			\int_0^{a_c} \beta(a, x) \underline{w}(a, x) \, da = \underline{z}_3(x) \int_0^{a_c} \beta(a, x) e^{\hat{\gamma} a} h(a) \, da \geq h(0) \underline{z}_3(x) = \underline{w}(0, x).
		\end{equation}
		
		{\bf Case 2.3: \(x \in [\delta, 1-\delta]\)}. Recall that \(q_{\min} = \min_{x \in [0, 1]} q(x) > 0\). Direct calculations yield
		
		\begin{align}\label{case4sub small}
			(\underline{\mathcal{L}}\, \underline{w})(a, x) &= e^{\underline{v}_2(a, x)} \left[\partial_a \underline{v}_2(a, x) - d \partial_{xx} \underline{v}_2(a, x) - d (\partial_x \underline{v}_2(a, x))^2 - \Lambda q(x) \partial_x \underline{v}_2(a, x) + \underline{\Psi}_\epsilon(a, x) \right] \nonumber \\
			&\leq e^{\underline{v}_2(a, x)} \left[d \widehat{c}_3 - \widehat{c}_4 \delta^{-1/2} \Lambda q_{\min} + \tilde{c}_1 + 2\hat{\gamma} \right] \nonumber \\
			&\leq 0
		\end{align}
		provided that \(d\) satisfies
		\begin{equation}\nonumber
			%\label{La>sub small}
			0<d \leq \frac{\widehat{c}_4 \delta^{-1/2} \Lambda q_{\min} - \tilde{c}_1 - 2\hat{\gamma}}{\widehat{c}_3}
		\end{equation}
		by the choice of \(\delta\) in \eqref{de small d}. Furthermore, by \eqref{v2}, we infer
		\begin{equation}\label{case44sub small}
			\int_0^{a_c} \beta(a, x) \underline{w}(a, x) \, da \geq \underline{w}(0, x).
		\end{equation}
		
		Combining \eqref{case1sub small d}–\eqref{case4sub small} and \eqref{case44sub small}, for \(d\) satisfying
		\[
		0<d < \min\left\{\frac{\delta^4 (\alpha_{\underline{\delta}} - \tilde{c}_2 - \alpha_1)}{2}, \; \frac{\widehat{c}_4 \delta^{-1/2} \Lambda q_{\min} - \tilde{c}_1 - 2\hat{\gamma}}{\widehat{c}_3}\right\},
		\]
		we see that the chosen \(\underline{w}\) is a desired strict generalized sub-solution of \eqref{auxNle-a} with \(\mathbb{X} = \{\delta, 1-\delta\}\).
		By applying Corollary \ref{corsub} to \eqref{auxNle-a}, we conclude that \(\lambda_0(\underline{\Psi}_\epsilon) \geq 0\), and consequently
		\[
		\lambda_0(d, \Lambda) \geq \alpha_1 - \epsilon\ \text{ for small } d,
		\]
		leading to \eqref{Nle small d}.
	\end{proof}
	
	\subsection{Limiting behavior for small diffusion without advection}
	In this subsection, we present the proof of Theorem \ref{Dlambda La=0}.
	\begin{proof}[Proof Theorem \ref{Dlambda La=0}] Let $\al_{\max}$ be defined through \eqref{almax}.
		First of all, for any $x\in[0, 1]$, consider the following problem
		\begin{equation}\label{dto0}
			\begin{cases}
				v_a(a, x)=-\mu(a, x)v(a, x)-\al v, \quad a\in(0, a_c),\\
				v(0, x)=\int_{0}^{a_c}\beta(a, x)v(a, x)da.
			\end{cases}
		\end{equation}
		Observe that for any $x\in[0, 1]$, \eqref{dto0} admits a principal eigenpair $(\al(x), v(a, x))$ with $\al(x)\le\al_{\max}$ and $v>0$. Without loss of generality, we can further assume that $\al:[0, 1]\to\R$ is of $C^2$  and $v\in C^2([0, 1], W^{1, 1}(0, a_c))$. In fact, one can approximate $\mu$ uniformly from above and below by $C^2$ functions in $x$. Then given the enhanced regularity of $\mu$, by the same argument as in Ducrot et al. \cite[Appendix]{Ducrot2022Age-structuredII}, one has the desired regularity for $(\al(x), v(a, x))$ with respect to $x$.
		
		To proceed further, we need to make a pivotal change of variable to ensure the derivative of $v$ with respect to $x$ on the boundary to have fixed sign. Recall from \cite[Theorem 1.3]{fernandez2019singular} and \cite[Lemma 2.1]{lopez2013linear} that there exists $\psi\in C^{2+\theta}(\R)$ with some $0<\theta<1$ such that $\psi(x)<0$ for all $x\in(0, 1), \psi(0)=\psi(1)=0$ and $\psi(x)>0$ for all $x\in\R\setminus[0, 1]$ along with
		$$
		\min\{-\psi'(0),\ \ \psi'(1)\}>0.
		$$
		Then setting 
		\begin{equation}\label{h(x)}
			h(x)=e^{\varrho \psi(x)},\ \ \, x\in(0, 1),
		\end{equation}
		for some constant $\varrho>0$ to be determined later, we have that $h\in C^{2+\theta}(\R)$ satisfies $h(x)>0$ for all $x\in\R$. Let us now make the following transformations:
		$$
		w(a, x)=\frac{u(a, x)}{h(x)}\; \text{ and }\; r(a, x)=\frac{v(a, x)}{h(x)},\quad (a, x)\in[0, a_c]\times[0, 1],
		$$ 
		then the eigenvalue problem \eqref{linear} can be rewritten as
		\begin{equation}\nonumber%\label{Robin II}
			\begin{cases}
				w_a=dw_{xx}+\frac{2d h'(x)}{h(x)}w_x+\frac{dh''(x)}{h(x)}w-\mu(a, x)w-\lambda w,&\quad (a, x)\in(0, a_c)\times(0, 1),\\
				w_x(a, x)+\varrho\psi'(x)w(a, x)=0,  &\quad a\in(0, a_c)\times\{0, 1\},\\
				w(0, x)=\int_{0}^{a_c}\beta(a, x)w(a, x)da, &\quad x\in(0, 1),
			\end{cases}
		\end{equation}
		and $r$ still satisfies \eqref{dto0}. 
		
		By choosing 
		$$
		\varrho>\max\left\{0,\ \,\left[\max_{a\in[0, a_c]}\frac{-r_x(a, 1)}{r(a, 1)}\right]\frac{1}{\psi'(1)}, \; \left[\max_{a\in[0, a_c]}\frac{r_x(a, 0)}{r(a, 0)}\right]\frac{1}{-\psi'(0)}\right\},
		$$
		one can check that 
		\begin{equation*}
			r_x(a, 1)+\varrho\psi'(1)r(a, 1)\ge0 \text{ and }r_x(a, 0)+\varrho\psi'(0)r(a, 0)\le0, \quad \forall a\in[0, a_c].
		\end{equation*}
		For any sufficiently small $\ep>0$, we consider the following auxiliary eigenvalue problem:
		\begin{equation}\label{Robin perturb}
			\begin{cases}
				w_a=dw_{xx}+\frac{2d h'(x)}{h(x)}w_x+\frac{dh''(x)}{h(x)}w-\mu(a, x)w-(\al_{\max}+\ep) w,&\quad (a, x)\in(0, a_c)\times(0, 1),\\
				w_x(a, x)+\varrho\psi'(x)w(a, x)=0,  &\quad a\in(0, a_c)\times\{0, 1\},\\
				w(0, x)=\int_{0}^{a_c}\beta(a, x)w(a, x)da, &\quad x\in(0, 1).
			\end{cases}
		\end{equation}
		
		We then define an operator $L$:
		\begin{equation}\label{L}
			L w:=w_{xx}+\frac{2 h'(x)}{h(x)}w_x+\frac{h''(x)}{h(x)}w, \quad w\in W^{2, p}(0, 1).
		\end{equation}
		Observe that $\norm{dLr}_{L^\infty((0, a_c)\times(0, 1))}\to0$ as $d\to0$. Thus for any $\ep>0$, one can choose $d$ sufficiently small such that 
		$$
		0<d\le\frac{\ep \min_{[0, a_c]\times[0, 1]}r(a, x)}{\norm{Lr}_{L^\infty((0, a_c)\times(0, 1))}},
		$$ 
		then $r$ satisfies
		$$
		r_a\ge dr_{xx}+\frac{2d h'(x)}{h(x)}r_x+\frac{dh''(x)}{h(x)}r-\mu(a, x)r-(\al_{\max}+\ep) r.
		$$
		By letting $\ep\to0$, one applies the comparison principle to \eqref{Robin perturb} to yield 
		$$
		\limsup\limits_{d\to0}w(a, x)\le r(a, x), 
		$$
		that is, $r(a, x)$ is a super-solution of \eqref{Robin perturb} for $d=d(\ep)$ small enough and thus $v(a, x)$ is a positive super-solution of \eqref{linear} for $d$ small enough. This implies that $\limsup\limits_{d\to0}\la_0(d, 0)\le\al_{\max}$. 
		
		Next let us prove the reverse inequality. To this aim, we first claim
		\begin{claim}\label{>0}
			Assume
			\begin{equation}\label{>1}
				\int_0^{a_c}\be(a, x)e^{-\int_0^a\mu(s, x)ds}da>1,\quad x\in[0, 1],
			\end{equation} 
			then $\la_0(d, 0)>0$ for sufficiently small $d>0$.
		\end{claim}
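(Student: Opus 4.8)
The plan is to deduce the claim from Corollary~\ref{corsub}: for a fixed small $\ep>0$ I will construct a nonnegative \emph{strict} generalized sub-solution of \eqref{equ} with $\mu$ replaced by $\mu+\ep$. Since the principal eigenvalue of that shifted problem equals $\la_0(d,0)-\ep$, Corollary~\ref{corsub} will then give $\la_0(d,0)-\ep\ge0$, i.e. $\la_0(d,0)\ge\ep>0$.

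First, observe that \eqref{>1} says exactly that, for every $x\in[0,1]$, the strictly decreasing map $\rho\mapsto\int_0^{a_c}\be(a,x)e^{-\rho a}e^{-\int_0^a\mu(s,x)ds}da$ exceeds $1$ at $\rho=0$, so its unique zero $\al(x)$ — the principal eigenvalue of \eqref{dto0} — is positive; by continuity of $x\mapsto\al(x)$ on the compact interval $[0,1]$ we get $\underline\al:=\min_{[0,1]}\al>0$. Fix any $\ep\in(0,\underline\al)$ and put $\kappa:=\underline\al-\ep>0$. As already arranged in the first part of this proof we may assume $\mu$ is of class $C^2$ in $x$ (the reduction is legitimate here because $\la_0$ is monotone in $\mu$ and, by compactness, \eqref{>1} survives small uniform perturbations of $\mu$ from above), so that $\al\in C^2([0,1])$ and the positive eigenfunction of \eqref{dto0}, written $v(a,x)=e^{-\int_0^a\mu(s,x)ds-\al(x)a}$, belongs to $C^2([0,1],W^{1,1}(0,a_c))$; in particular $v,v_x,v_{xx}$ are bounded on $[0,a_c]\times[0,1]$ and $v$ is bounded below by a positive constant.

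To turn $v$ into a sub-solution of the Neumann problem I multiply it by a boundary‑correcting weight, in the spirit of the change of variables used above. Let $\psi$ be as in \eqref{h(x)} (so $\psi(0)=\psi(1)=0$ and $\min\{-\psi'(0),\psi'(1)\}>0$), set $\zeta(x):=e^{-\varrho\psi(x)}$ with $\varrho>0$ to be chosen, and let $\underline w:=\zeta v$. Since $\zeta$ depends only on $x$, one has $\underline w_a=-(\mu(a,x)+\al(x))\underline w$, and, as $\zeta(x)$ factors out of the $a$-integral, $\underline w(0,x)=\int_0^{a_c}\be(a,x)\underline w(a,x)da$, so the nonlocal condition holds with equality. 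Using $\zeta(0)=\zeta(1)=1$, $\zeta'(0)=-\varrho\psi'(0)>0$, $\zeta'(1)=-\varrho\psi'(1)<0$, the boundedness of $v_x$ and the strict positivity of $v$, one checks that for all sufficiently large $\varrho$ (depending only on $v$ and $\psi$, hence on neither $d$ nor $\ep$)
\begin{equation*}
\underline w_x(a,0)=-\varrho\psi'(0)v(a,0)+v_x(a,0)\ge0,\qquad \underline w_x(a,1)=-\varrho\psi'(1)v(a,1)+v_x(a,1)\le0,\qquad\forall a\in[0,a_c].
\end{equation*}
Fix such a $\varrho$. It remains to verify the interior inequality $\underline w_a\le d\underline w_{xx}-(\mu+\ep)\underline w$, which rearranges to $d\underline w_{xx}+(\al(x)-\ep)\underline w\ge0$: with $\varrho$ now fixed, $\underline w_{xx}=\zeta''v+2\zeta'v_x+\zeta v_{xx}$ is bounded, say $\|\underline w_{xx}\|_\infty\le M$, while $\al(x)-\ep\ge\kappa>0$ and $\underline w\ge m>0$ for some constant $m$, so $d\underline w_{xx}+(\al(x)-\ep)\underline w\ge-dM+\kappa m>0$ for every $d<\kappa m/M$. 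Hence, for such $d$, $\underline w\ge0$ is a strict generalized sub-solution (with $\mathbb{X}=\emptyset$ in Definition~\ref{super/sub}) of \eqref{equ} with $\mu+\ep$ in place of $\mu$, and Corollary~\ref{corsub} gives $\la_0(d,0)\ge\ep>0$.

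The one real subtlety is the order of choices: $\varrho$ must be taken large \emph{first}, to fix the signs of $\underline w_x$ at $x=0,1$ demanded of a sub-solution, and only \emph{afterwards} is $d$ taken small, so that the (now $\varrho$-dependent but $d$-independent) term $d\underline w_{xx}$ cannot overcome the strictly positive zeroth-order gain $(\al(x)-\ep)\underline w$; the two requirements are compatible precisely because \eqref{>1} forces $\al(x)-\ep$ to be bounded below by a positive constant. Everything else — the regularity of $v$, the factoring of the nonlocal term, and the application of Corollary~\ref{corsub} — is routine.
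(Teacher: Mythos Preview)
Your proof is correct and takes a genuinely different route from the paper. The paper argues by contradiction: assuming $\la_0(d_n,0)\le0$ along a sequence $d_n\to0$, it normalizes the eigenfunctions $u_n$, rescales by $\textbf x=d_n^{-1/2}(x-x_n)$ around a maximum point, and extracts a limit $\bar v$ solving a separable problem on $(0,a_c)\times\R$ whose principal eigenvalue, by \eqref{>1}, must be strictly positive --- contradicting $\bar\la\le0$. This blow-up argument, borrowed from Hess, requires no extra regularity of $\mu$ beyond continuity in $x$. You instead construct a positive strict sub-solution directly, by taking the ODE eigenfunction $v(a,x)$ of \eqref{dto0} and weighting it with $e^{-\varrho\psi}$ to force the correct Neumann boundary signs, then invoke Corollary~\ref{corsub} on the $\ep$-shifted problem. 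Your approach is more elementary and fully in the spirit of the paper's super/sub-solution machinery (indeed, it mirrors the super-solution argument the paper uses just above the Claim for the upper bound $\limsup_{d\to0}\la_0\le\al_{\max}$), but it costs you the $C^2$-in-$x$ approximation of $\mu$ to guarantee $\underline w_{xx}$ is bounded. The monotonicity-of-$\la_0$-in-$\mu$ reduction you invoke to justify that approximation is legitimate and makes the argument complete; the paper's compactness route simply avoids tracking this regularity at the price of being less constructive.
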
 
		At this moment, we assume that the above claim is true. Then rewrite the eigenvalue problem \eqref{linear} with $\La=0$ as
		\begin{equation}\label{-ep}
			\begin{cases}
				u_a=du_{xx}-(\mu(a, x)+\zeta)u-(\la-\zeta)u,&\quad (a, x)\in(0, a_c)\times(0, 1),\\
				u_x(a, x)=0,  &\quad a\in(0, a_c)\times\{0, 1\},\\
				u(0, x)=\int_{0}^{a_c}\beta(a, x)u(a, x)da, &\quad x\in(0, 1),
			\end{cases}
		\end{equation}
		where $\zeta$ is a number such that
		$$
		\int_0^{a_c}\be(a, x)e^{-\zeta a}e^{-\int_0^a\mu(s, x)ds}da>1, \quad x\in[0, 1].
		$$
		Thanks to Claim \ref{>0}, the principal eigenvalue $\la_0$ to \eqref{-ep} satisfies $\la_0(d, 0)\ge\zeta$ for sufficiently small $d>0$. Thus $\liminf\limits_{d\to0}\la_0(d, 0)\ge\al_{\max}$ follows using the definition of $\al_{\max}$ (see \eqref{almax}) and the desired result is established. 
		
		In the sequel, we provide the proof of Claim \ref{>0}. Our argument is inspired by Hess \cite[Proposition 17.3]{hess1991periodic}. Assume to the contrary that there are $d_n\to0$ and associated solutions $u_n$ of \eqref{linear}, such that $\la_n:=\la_0(d_n, 0)\le0$ for all $n$. Let us first normalize $u_n$ by $\norm{u_n}_{L^1((0, a_c)\times(0, 1))}=1$. Since
		$$
		\la_n=\int_0^1\int_0^{a_c}(\be(a, x)-\mu(a, x))u_n-\int_0^1u_n(a_c, x),
		$$
		the boundedness of $\la_n$ follows. In the following we normalize $u_n$ by $\norm{u_n}_{C([0, a_c]\times[0, 1])}=1$. Let $(a_n, x_n)\in[0, a_c]\times[0, 1]$ be such that $u_n(a_n, x_n)=1$ for all $n$. Passing to a subsequence we may assume that $a_n\to\bar a\in[0, 1], x_n\to\bar x\in[0, 1]$ and $\la_n\to\bar\la\le0$.
		
		First consider the case that $\bar x\in(0, 1)$. For each $n\geq1$, define $\textbf x:={d_n}^{-1/2}(x-x_n)$ and let $v_n(a, \textbf x):=u_n(a, x_n+{d_n}^{1/2}\textbf x)$. Then $v_n$ satisfies
		\begin{equation}\label{new variable}
			\begin{cases}
				(v_n)_a=(v_n)_{\textbf x\textbf x}-\mu_n(a, \textbf x)v_n-\la_nv_n,&\quad (a, \textbf x)\in(0, a_c)\times\tilde\Omega_n,\\
				(v_n)_{\textbf x}(a, \textbf x)=0,  &\quad a\in(0, a_c)\times\partial\tilde\Omega_n,\\
				v_n(0, \textbf x)=\int_{0}^{a_c}\beta_n(a, \textbf x)v_n(a, \textbf x)da, &\quad \textbf x\in\tilde\Omega_n,
			\end{cases}
		\end{equation}
		where $\mu_n(a, \textbf x):=\mu(a, x_n+{d_n}^{1/2}\textbf x), \be_n(a, \textbf x):=\be(a, x_n+{d_n}^{1/2}\textbf x)$ and $\tilde\Omega_n:=\{{d_n}^{-1/2}(x-x_n): x\in(0, 1)\}$. Obviously, $0\le v_n\le1$, $v_n(a_n, 0)=1$, and $\text{dist}(0, \partial\tilde{\Omega}_n)\to\infty$ as $n\to\infty$. Therefore, passing up to a subsequence, by the standard parabolic estimates as applied to \eqref{new variable}, $v_n\to \bar v(a, \textbf x)$ converges locally uniformly in $[0, a_c]\times\R$, where $0\le\bar v\le1$ fulfills
		\begin{equation}\label{bar v}
			\begin{cases}
				\bar v_a=\bar v_{\textbf x\textbf x}-\mu(a, \bar x)\bar v-\bar\la\bar v, &(a, \textbf x)\in(0, a_c)\times\R,\\
				\bar v(0, \textbf x)=\int_0^{a_c}\be(a, \bar x)\bar v(a, \textbf x)da, &\textbf x\in\R.
			\end{cases}
		\end{equation}
		Observe that $\bar v(\bar a, 0)=1$. 
		
		Since $\mu$ and $\be$ in \eqref{bar v} are independent of $\textbf x$, the solution $\bar v$ of \eqref{bar v} is separable (see \cite[Section 1]{langlais1988large}), i.e. $\bar v(a, \textbf x)=p(a)\vartheta(\textbf x)$, where $\vartheta$ solves $\vartheta_{\textbf x\textbf x}=0$ in $\R$, and $p(a)$ is a positive solution of the following equation
		\begin{equation}\nonumber
			\begin{cases}
				p_a=-\mu(a, \bar x) p-\bar\la p, &a\in(0, a_c),\\
				p(0)=\int_0^{a_c}\be(a, \bar x)p(a)da.
			\end{cases}
		\end{equation}
		However, due to \eqref{>1}, the principal eigenvalue $\bar \la$ of the above eigenvalue problem satisfies $\bar\la>0$, which is a contradiction.
		
		Finally, one can handle the case $\bar x\in\{0, 1\}$ exactly as in Hess \cite[Proposition 17.3]{hess1991periodic}. Here we omit the details and the proof is thus complete.
	\end{proof}
	
	\subsection{Limiting behavior for large diffusion}
	This subsection is devoted to the  study of the asymptotic behavior of $\lambda_0(d, \La)$ as the diffusion rate $d$ approaches infinity. Note that no positivity of $q$ is required any more in this subsection. Before proceeding, we provide an upper and lower bound estimate for $\lambda_0(d, \La)$, which is independent of $d>0$.
	\begin{lemma}\label{bound} Given $d>0$ and $\La\in\R$, we have $\lambda_1\le \lambda_0(d, \La)\le\lambda^1$ , where $\lambda^1$ and $\lambda_1$ are defined through \eqref{lambda^1} and \eqref{lambda_1} below, respectively.
	\end{lemma}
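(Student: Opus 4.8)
The plan is to use \emph{spatially homogeneous} test functions — functions of the age $a$ only — in the super-/sub-solution characterization of the sign of $\lambda_0$ (Proposition~\ref{equivalent} and Corollary~\ref{corsub}). Since such functions are constant in $x$, the diffusion term $du_{xx}$ and the advection term $\Lambda q u_x$ vanish identically, so the resulting bounds are automatically independent of $d$ and $\Lambda$ and require no sign condition on $q$. Throughout I would work with the equivalent problem \eqref{equ-ei} on $[0,a_c]\times[0,1]$ (Remark~\ref{remark-2.1}). I expect $\lambda^1$ and $\lambda_1$ to be the unique reals determined, in the spirit of \eqref{al_1}--\eqref{al_0}, by
\[
\int_0^{a_c}\overline{\beta}(a)e^{-\lambda^1 a}e^{-\int_0^a\underline{\mu}(s)\,ds}\,da=1,\qquad \int_0^{a_c}\underline{\beta}(a)e^{-\lambda_1 a}e^{-\int_0^a\overline{\mu}(s)\,ds}\,da=1;
\]
their existence and uniqueness follow exactly as for $\lambda_0$ in Lemma~\ref{lem3.1}, each left-hand side being continuous, strictly decreasing in its parameter, blowing up as the parameter $\to-\infty$ and vanishing as it $\to+\infty$, with integrands positive by Assumption~\ref{Ass}-(ii) (which gives $\int_0^{a_c}\underline{\beta}>0$) and finite because $a_c<a_m$. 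I will also use the elementary fact that replacing $\mu$ by $\mu+c$ in \eqref{equ-ei} merely shifts the spectral parameter by $-c$ (absorb $c$ into $\lambda$, or note $\mathcal M^{\mu+c}_\lambda=\mathcal M^\mu_{\lambda+c}$ for the operator in the proof of Lemma~\ref{lem3.1}), so the principal eigenvalue of the shifted problem equals $\lambda_0(d,\Lambda)-c$.

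For the upper bound, fix $\varepsilon>0$ and put $\overline{w}(a):=e^{-(\lambda^1+\varepsilon)a}e^{-\int_0^a\underline{\mu}(s)\,ds}>0$. Then $\overline{w}_x\equiv0$, so the Neumann conditions hold, and $\underline{\mu}(a)\le\mu(a,x)$ gives $\overline{w}_a=-(\underline{\mu}(a)+\lambda^1+\varepsilon)\overline{w}\ge-(\mu(a,x)+\lambda^1+\varepsilon)\overline{w}$; moreover, pointwise $\beta(a,x)e^{-(\lambda^1+\varepsilon)a}\le\overline{\beta}(a)e^{-\lambda^1 a}$ with strict inequality on a set of positive measure, so
\[
\int_0^{a_c}\beta(a,x)\overline{w}(a)\,da<\int_0^{a_c}\overline{\beta}(a)e^{-\lambda^1 a}e^{-\int_0^a\underline{\mu}}\,da=1=\overline{w}(0).
\]
Hence $\overline{w}$ is a strict positive generalized super-solution of \eqref{equ} with $\mu$ replaced by $\mu+\lambda^1+\varepsilon$, so by Proposition~\ref{equivalent} the principal eigenvalue of that shifted problem, namely $\lambda_0(d,\Lambda)-\lambda^1-\varepsilon$, is negative; letting $\varepsilon\to0$ yields $\lambda_0(d,\Lambda)\le\lambda^1$. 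The lower bound is the mirror image: with $\varepsilon>0$ set $\underline{w}(a):=e^{-(\lambda_1-\varepsilon)a}e^{-\int_0^a\overline{\mu}(s)\,ds}>0$; from $\overline{\mu}(a)\ge\mu(a,x)$ one gets $\underline{w}_a\le-(\mu(a,x)+\lambda_1-\varepsilon)\underline{w}$, and $\beta(a,x)\ge\underline{\beta}(a)$ together with $e^{-(\lambda_1-\varepsilon)a}\ge e^{-\lambda_1 a}$ gives $\int_0^{a_c}\beta(a,x)\underline{w}(a)\,da>1=\underline{w}(0)$, the strict inequality making $\underline{w}$ a strict generalized sub-solution of \eqref{equ} with $\mu$ replaced by $\mu+\lambda_1-\varepsilon$. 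Corollary~\ref{corsub} then forces $\lambda_0(d,\Lambda)-\lambda_1+\varepsilon\ge0$, and letting $\varepsilon\to0$ gives $\lambda_0(d,\Lambda)\ge\lambda_1$.

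There is no genuinely hard step here: once the correct homogeneous profiles are in hand, the proof reduces to the two short verifications above. The only points needing a little care are (i) the bookkeeping of the shift $\mu\mapsto\mu+c$ and the identity ``shift $\mu$ up by $c$ $\Longleftrightarrow$ shift $\lambda_0$ down by $c$'', which is immediate from \eqref{equ-ei}, and (ii) the small $\varepsilon$-perturbation, which is exactly what upgrades the weak nonlocal inequality of the $\varepsilon=0$ profile to the strict inequality demanded by Proposition~\ref{equivalent} and Corollary~\ref{corsub}. If any obstacle remains it is purely cosmetic, namely reconciling the normalization in the (not yet displayed) definitions \eqref{lambda^1}--\eqref{lambda_1} with the one used above; the argument is insensitive to that choice.
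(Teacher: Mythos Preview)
Your proposal is correct and follows essentially the same approach as the paper: both use spatially homogeneous (age-only) profiles built from the extremal rates $\overline\beta,\underline\mu$ (resp.\ $\underline\beta,\overline\mu$) as super- (resp.\ sub-) solutions, exploiting that $du_{xx}+\Lambda q u_x$ vanishes on such functions, and then invoke Proposition~\ref{equivalent} and Corollary~\ref{corsub}. The only minor difference is that the paper works directly with the eigenfunctions $\overline\Phi,\underline\Phi$ at $\lambda^1,\lambda_1$ and asserts strictness, whereas you insert the $\varepsilon$-shift to force strictness in the nonlocal condition and then pass to the limit---a cosmetic refinement that handles the degenerate case $\mu,\beta$ independent of $x$ more transparently.
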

	
	\begin{proof} Let $(\lambda^1, \Phi^1)$ be the principal eigenpair of the following age-structured operator:
		\begin{equation}\nonumber
			%\label{Phi}
			\begin{cases}
				\overline\Phi_a(a)=-(\lambda^1+\underline{\mu}(a))\overline\Phi(a),\\
				\overline\Phi(0)=\int_{0}^{a_c}\overline{\beta}(a)\overline\Phi(a)da.
			\end{cases}
		\end{equation}
		Then $\lambda^1$ satisfies
		\begin{eqnarray}\label{lambda^1}
			\int_{0}^{a_c}\overline{\beta}(a)e^{-\lambda^1a}e^{-\int_{0}^{a}\underline{\mu}(s)ds}da=1.
		\end{eqnarray} 
		It is easy to compute that 
		\begin{equation}\nonumber
			\begin{cases}
				\overline\Phi_a-d\overline\Phi_{xx}-\La q(x)\overline\Phi_x+(\mu(a, x)+\la^1)\overline\Phi\ge0, &(a, x)\in(0, a_c)\times(0, 1),\\
				\overline\Phi_x(a, 0)=\overline\Phi_x(a, 1)=0, &a\in(0, a_c),\\
				\overline\Phi(0)\ge\int_0^{a_c}\beta(a, x)\overline\Phi(a)da, &x\in(0, 1).
			\end{cases}
		\end{equation} 
		Observe that $\overline\Phi$ is a positive strict super-solution of \eqref{equ} with $\mu$ replaced by $\mu+\la^1$. It follows by Proposition \ref{equivalent} that $\lambda_0(d, \La)\le\lambda^1$.
		
		Similarly, consider the following equation with a positive solution $\underline\Phi(a)$: 
		\begin{equation}\nonumber
			%\label{Phi2}
			\begin{cases}
				\underline\Phi(a)=-(\lambda_1+\overline{\mu}(a))\underline\Phi(a),\\
				\underline\Phi(0)=\int_{0}^{a_c}\underline{\beta}(a)\underline\Phi(a)da,
			\end{cases}
		\end{equation}
		where $\lambda_1$ satisfies 
		\begin{eqnarray}\label{lambda_1}
			\int_{0}^{a_c}\underline{\beta}(a)e^{-\lambda_1a}e^{-\int_{0}^{a}\overline{\mu}(s)ds}da=1.
		\end{eqnarray} 
		Then similar computation yields 
		\begin{equation}\nonumber
			\begin{cases}
				\underline\Phi_a-d\underline\Phi_{xx}-\La q(x)\underline\Phi_x+(\mu(a, x)+\la_1)\underline\Phi\le0, &(a, x)\in(0, a_c)\times(0, 1),\\
				\underline\Phi_x(a, 0)=\underline\Phi_x(a, 1)=0, &a\in(0, a_c),\\
				\underline\Phi(0)\le\int_0^{a_c}\beta(a, x)\underline\Phi(a)da, &x\in(0, 1),
			\end{cases}
		\end{equation}  
		which implies that $\lambda_0(d, \La)\ge\lambda_1$. Thus the conclusion is proven.
	\end{proof}
	
	Now we provide the proof of Theorem \ref{average}.    
	\begin{proof}[Proof of Theorem \ref{average}] 	
		We will employ a similar approach used in \cite[Theorem 1.3]{peng2015effects}. Let $\phi$ be the principal eigenfunction corresponding to $\lambda_0(d, \La)$. We normalize $\phi$ such that $\norm{\phi}_{L^2((0, a_c)\times(0, 1))}=1$ for all $d>0$ and set $\overline q=\max_{[0, 1]}|q(x)|$. Thus $\overline q\ge0$ is independent of $d$ and $\La$. 
		
		Recall that $(\lambda_0(d, \La),\phi)$ satisfies 
		\begin{equation}\label{dLa}
			\begin{cases}
				\phi_a=d\phi_{xx}+\Lambda q(x)\phi_x-\mu(a, x)\phi-\lambda_0(d, \La)\phi,&\quad (a, x)\in(0, a_c)\times(0, 1),\\
				\phi_x(a, 0)=\phi_x(a, 1)=0, &\quad a\in(0, a_c),\\
				\phi(0, x)=\int_{0}^{a_c}\beta(a, x)\phi(a, x)da, &\quad x\in(0, 1).
			\end{cases}
		\end{equation}
		We multiply the equation \eqref{dLa} by $\phi$ and then integrate over $(0, a_c)\times(0, 1)$ to infer
		\begin{eqnarray}
			&&\frac12\norm{\phi(a_c, \cdot)}^2_{L^2(0, 1)}-\frac12\int_0^1\left(\int_0^{a_c}\beta(a, x)\phi(a, x)da\right)^2dx\nonumber\\
			&=&-d\int_0^1\int_0^{a_c}\phi_x^2dadx+\La\int_0^1\int_0^{a_c}q(x)\phi_x\phi dadx-\int_0^1\int_0^{a_c}(\mu(a, x)+\lambda_0(d, \La))\phi^2dadx.\nonumber
		\end{eqnarray}
		By the normalization, Lemma \ref{bound} and the H\"{o}lder inequality, we then find that
		\begin{eqnarray}
			d\int_0^1\int_0^{a_c}\phi_x^2dadx&\le&\overline q|\La|\left(\int_0^1\int_0^{a_c}\phi_x^2 dadx\right)^{1/2}-\lambda_0(d, \La)-\inf_{a\in(0, a_c)}\underline\mu(a)\nonumber\\
			&&+\max_{x\in[0, 1]}\frac12\norm{\beta(\cdot, x)}_{L^2(0, a_c)}^2-\frac12\norm{\phi(a_c, \cdot)}^2_{L^2(0, a_c)}\nonumber\\
			&\le&\overline q|\La|\left(\int_0^1\int_0^{a_c}\phi_x^2 dadx\right)^{1/2}+\max_{x\in[0, 1]}\frac12\norm{\beta(\cdot, x)}_{L^2(0, a_c)}^2-\la_1-\inf_{a\in(0, a_c)}\underline\mu(a)\nonumber\\
			&\le&\overline q|\La|\left(\int_0^1\int_0^{a_c}\phi_x^2 dadx\right)^{1/2}+C,\nonumber
		\end{eqnarray}
		where the positive constant $C$ is independent of $d, \La$ and may be different from place to place. Thus, for all large $d$, we can apply the H\"{o}lder inequality to obtain
		\begin{eqnarray}\label{poincare}
			\int_0^1\int_0^{a_c}\phi_x^2dadx\le C\frac{\La^2+d}{d^2}.
		\end{eqnarray}
		
		Define
		$$
		\phi_*(a)=\int_0^1\phi(a, x)dx\ \, \text{ and }\ \ \phi^*(a, x)=\phi(a, x)-\phi_*(a).
		$$
		It is not hard to see that 
		$$
		\int_0^1\phi^*(a, x)dx=0\ \ \ \text{ for all }a\in[0, a_c].
		$$
		Thus, by the well-known Poincar\'{e} inequality, one can conclude that
		$$
		\int_0^1(\phi^*)^2dx\le C\int_0^1(\phi_x^*)^2dx\ \ \ \text{ for all }a\in[0, a_c].
		$$
		Since $\phi^*_x=\phi_x$, because of \eqref{poincare}, we infer
		\begin{eqnarray}\label{poin}
			\int_0^1\int_0^{a_c}(\phi^*_x)^2dadx\le C\frac{\La^2+d}{d^2}.
		\end{eqnarray}
		This, combined with the H\"{o}lder inequality and \eqref{poincare}, implies that
		\begin{eqnarray}\label{u_x}
			\int_0^1\int_0^{a_c}|\phi^*|dadx\le C\left(\frac{\La^2+d}{d^2}\right)^{1/2},\ \ \; \int_0^1\int_0^{a_c}|\phi_x|dadx\le C\left(\frac{\La^2+d}{d^2}\right)^{1/2}.
		\end{eqnarray}
		Moreover, we integrate the equations of \eqref{dLa} over $(0, 1)$ to see that
		\begin{equation}\label{ave}
			\frac{d\phi_*}{da}=-\phi_*\int_0^1[\lambda_0(d, \La)+\mu(a, x)]dx+\int_0^1[\La q(x)\phi_x-\mu(a, x)\phi^*]dx.
		\end{equation}
		In view of \eqref{u_x}, there holds
		$$
		\int_0^{a_c}\left|\int_0^1\La q(x)\phi_x-\mu(a, x)\phi^*dx\right|da\to 0\  \text{ as }d\to\infty.
		$$
		Hence, solving the ordinary differential equation \eqref{ave} results in
		\begin{equation}\label{sol}
			\phi_*(a)=\phi_*(0)e^{-\int_0^a\int_0^1(\lambda_0(d, \La)+\mu(s, x))dxds}+\tau(a), 
		\end{equation}
		where 
		$$
		\tau(a)\to0\ \ \text{ uniformly on }[0, a_c]\ \  \text{ as }d\to\infty.
		$$
		Now plugging \eqref{sol} into the integral initial condition yields
		\begin{eqnarray}
			\phi_*(0)&=&\int_0^1\int_{0}^{a_c}\beta(a, x)\phi(a, x)dadx\nonumber\\
			&=&\int_0^1\int_{0}^{a_c}\beta(a, x)\phi_*(a)dadx+\int_0^1\int_{0}^{a_c}\beta(a, x)(\phi(a, x)-\phi_*(a))dadx\nonumber\\
			&=&\int_0^1\int_{0}^{a_c}\beta(a, x)\phi_*(0)e^{-\int_{0}^{a}\int_0^1(\lambda_0(d, \La)+\mu(s, x))dxds}dadx+\tau(a)\int_0^1\int_{0}^{a_c}\beta(a, x)dadx.\nonumber
		\end{eqnarray} 
		
		We then claim that $\phi_*(0)$ does not approach zero as $d\to\infty$. Otherwise, suppose that $\phi_*(0)\to0$ as $d\to\infty$ (up to a sequence of $d$), then \eqref{sol} implies that $\phi_*(a)\to0$ uniformly in $[0, a_c]$. This, together with \eqref{poin}, indicates
		$$
		\int_0^1\int_0^{a_c}\phi^2(a, x)dadx\to0\ \ \text{ as }d\to\infty.
		$$
		which is a contradiction with the normalization  $\norm{\phi}_{L^2((0, a_c)\times(0, 1))}=1$.
		
		Since $\beta$ is also bounded, after cancellation of $\phi_*(0)$ in both sides, we have 
		$$ 
		1=\int_0^1\int_{0}^{a_c}\beta(a, x)e^{-\lambda_0(d, \La)a}e^{-\int_{0}^{a}\int_0^1\mu(s, x)dxds}dadx+\frac{\tau(a)}{\phi_*(0)}\int_0^1\int_0^{a_c}\beta(a, x)dadx.
		$$
		By letting $d\to\infty$, we obtain
		$$ 
		\int_0^1\int_{0}^{a_c}\beta(a, x)e^{-a\lim_{d\to\infty}\lambda_0(d, \La)}e^{-\int_{0}^{a}\int_0^1\mu(s, x)dxds}dadx=1, 
		$$
		yielding $\lim_{d\to\infty}\lambda_0(d, \La)=\bar\al$, where $\bar\al$ is defined in \eqref{baral}.
		This completes the proof.
	\end{proof}
	
	\section{Analysis on the model \eqref{nonlinear}}\label{application} 
	In this section, we concentrate on analyzing the model \eqref{nonlinear} and provide detailed proofs for the main theorems presented in Subsection \ref{1.1}.
	
	\subsection{Global dynamics of \eqref{nonlinear}} This subsection is devoted to the proofs of Theorem \ref{stability MR} and Proposition \ref{asymptotic}.
	
	\begin{proof}[Proof of Theorem \ref{stability MR}] Keeping Remark \ref{remark-2.1} in mind, the existence, uniqueness, and global stability of the positive equilibrium \( u^* \) can be established by slightly modifying the arguments in Ducrot et al. \cite[Section 4]{Ducrot2022Age-structuredII}. We omit the details here.
		
		Next, we examine the global stability of the zero  equilibrium when \(\lambda_0 \le 0\). Following the approach used in the proof of Theorem 4.4 in Ducrot et al. \cite[Section 4]{Ducrot2022Age-structuredII}, we can select a sufficiently large \( L \) such that the initial condition \( u_0 \leq L e^{(\Lambda/d)x} \)  and \( L e^{(\Lambda/d)x} \)  is a super-solution to \eqref{nonlinear}. Let \(\overline{U}\) be the solution of \eqref{nonlinear} that starts from the initial condition \( L e^{(\Lambda/d)x} \). It follows that for any $a_+\in[a_c, a_m)$,
		\begin{equation}\label{u-comp}
			0 \le u(t, a, x) \le \overline{U}(t, a, x) \le L e^{(\Lambda/d)x} \quad \text{for all } (a, x) \in [0, a_+] \times [0, 1],\, t > 0.
		\end{equation}
		Note that \(\overline{U}\) is non-increasing in \( t \) for each \((a, x) \in [0, a_+] \times [0, 1]\). As \( t \to +\infty \), \(\overline{U}\) converges in \( C([0, a_+] \times [0, 1]) \) to a nonnegative solution of the stationary problem \eqref{logistic}. According to Proposition 4.7 in Ducrot et al. \cite[Section 4]{Ducrot2022Age-structuredII}, this implies that \(\overline{U} \to 0\) in \( C([0, a_+] \times [0, 1]) \) as \( t \to +\infty \). Consequently, by \eqref{u-comp}, we conclude that \( u \to 0 \) in \( C([0, a_+] \times [0, 1]) \) as \( t \to +\infty \). This completes the proof of Theorem \ref{stability MR}.
	\end{proof}  
	
	\begin{proof}[Proof of Proposition \ref{asymptotic}]
		According to Theorem \ref{Lalambda} and Theorem \ref{Dlambda}, \(\lambda_0 > 0\) for all fixed \(d > 0\) when \(\Lambda > 0\) is sufficiently large or for fixed \(\Lambda > 0\) when \(d > 0\) is sufficiently small, provided that condition \eqref{alpha} holds. Conversely, if we reverse condition \eqref{alpha}, i.e.,
		\[
		f_u(1, 0)\int_{0}^{a_c}\beta(a, 1)e^{-\int_0^a\mu(s, 1)ds}da < 1,
		\]
		then \(\lambda_0 < 0\) for all fixed \(d > 0\) and \(\Lambda > 0\) sufficiently large or for fixed \(\Lambda > 0\) and \(d > 0\) sufficiently small.
		
		When \(\Lambda = 0\), Theorem \ref{Dlambda} implies that \(\lambda_0 > 0\) for all \(d > 0\) sufficiently small, provided that condition \eqref{al max} holds. Conversely, \(\lambda_0 < 0\) for all \(d > 0\) sufficiently small if the condition \eqref{al max} is reversed.
		
		Finally, for fixed \(\Lambda \ge 0\), Theorem \ref{average} indicates that \(\lambda_0 > 0\) for all \(d > 0\) sufficiently large, provided that \eqref{al average} holds. Conversely, \(\lambda_0 < 0\) for all large \(d > 0\) if the condition \eqref{al average} is reversed.
		
		In light of the above analysis, Proposition \ref{asymptotic} follows directly from Theorem \ref{stability MR}.
	\end{proof}
	
	\subsection{Asymptotic profile for large advection or small diffusion}
	This subsection addresses the asymptotic profile of the positive equilibrium as the advection rate approaches infinity or the diffusion rate approaches zero when advection is present. We will prove Theorem \(\ref{AP LA MR}\).
	
	We start with two lemmas which will provide some upper estimates on the integral $\int_0^au^*(s, x)ds$ for any $(a, x)\in[0, a_m)\times[0, 1]$
	of the positive equilibrium $u^*$ of \eqref{nonlinear}. The first lemma reads as follows.
	\begin{lemma}\label{estimate of u^*} Let $u^*$ be the positive equilibrium  of \eqref{nonlinear}. For any $d>0$ and $\La>0$, we have 
		\begin{equation}\nonumber
			\int_0^au^*(s, x)ds\le\frac{\La La_++M_1\left[1-\frac{d}{\La}(1-e^{-\frac{\La}{d}})\right]}{d(1-e^{-\frac{\La}{d}})}e^{-\frac{\La}{d}(1-x)}+\frac{L}{\La}\left(1-e^{-\frac{\La}{d}(1-x)}\right), \;\forall(a, x)\in[0, a_+]\times[0, 1],
		\end{equation}
		where $M_1:=a_+L\norm{\overline\mu}_{L^\infty(0, a_+)}+2L$ and $a_+\in[a_c, a_m)$.
	\end{lemma}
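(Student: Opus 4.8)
The plan is to integrate the equilibrium equation in age, extract a scalar elliptic inequality in $x$ for the quantity $U(a,x):=\int_0^a u^*(s,x)\,ds$, and then solve the resulting boundary value problem explicitly with the help of the boundary condition $du^*_x-\Lambda u^*=0$ at $x\in\{0,1\}$. First I would recall from the stationary problem \eqref{logistic} that $u^*_a = du^*_{xx}-\Lambda u^*_x-\mu(a,x)u^*$. Integrating over $a\in(0,a)$ and using $\mu\ge0$ gives
\begin{equation}\nonumber
 dU_{xx}(a,x)-\Lambda U_x(a,x)\ge u^*(a,x)-u^*(0,x)\ge -u^*(0,x)=-f\Big(x,\int_0^{a_m}\beta u^*\,da\Big)\ge -L,
\end{equation}
where the last bound uses Assumption \ref{assump-f}-(iv). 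Here one should restrict to $a\le a_+$ so that $U$ is well-defined and the age-integration makes sense; the term $u^*(a,x)\ge0$ is simply dropped. Integrating the boundary condition $du^*_x-\Lambda u^*=0$ in $a$ likewise yields $dU_x(a,x)-\Lambda U(a,x)=0$ at $x=0,1$.

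Next I would treat $V(x):=U(a,x)$ (for fixed $a\le a_+$) as the solution of the ODE inequality $dV''-\Lambda V'\ge -L$ on $(0,1)$ with Robin-type conditions $dV'(0)=\Lambda V(0)$, $dV'(1)=\Lambda V(1)$. The homogeneous operator $dV''-\Lambda V'$ has characteristic roots $0$ and $\Lambda/d$, so the comparison/Green's function approach gives an explicit upper bound: one constructs the solution $\overline V$ of $d\overline V''-\Lambda\overline V'=-L$ with the same boundary data that dominates $V$, and since $\overline V$ is built from $1$, $x$, $e^{(\Lambda/d)x}$ the bound comes out in closed form. The coefficient $e^{-\frac{\Lambda}{d}(1-x)}$ in the claimed inequality is exactly what one expects from the decaying fundamental solution normalized at the downstream boundary $x=1$, and the $\frac{L}{\Lambda}(1-e^{-\frac{\Lambda}{d}(1-x)})$ term is the particular-solution contribution. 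To pin down the constant $M_1=a_+L\|\overline\mu\|_{L^\infty(0,a_+)}+2L$ multiplying $e^{-\frac{\Lambda}{d}(1-x)}$, I would need a crude a priori $L^\infty$ bound on $u^*$ itself: from $u^*(0,x)\le L$ and $u^*_a\le du^*_{xx}-\Lambda u^*_x$ (dropping $-\mu u^*$), a maximum-principle argument along characteristics in $a$ gives $u^*(a,x)\le L\,e^{(\Lambda/d)x}$ or a similar bound, which then controls $U(a,\cdot)$ at, say, $x=1$ and feeds into the boundary term; the factor $a_+\|\overline\mu\|_{L^\infty}$ arises from estimating $\int_0^a\mu(s,x)u^*\,ds$ if one instead keeps the $\mu$-term rather than discarding it, and the $+2L$ collects the contributions of $u^*(0,x)$ and the top age slice $u^*(a,x)$.

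The main obstacle I anticipate is getting the boundary data for $V$ sharp enough that the resulting explicit bound matches the stated inequality exactly — in particular, correctly identifying the constant $M_1$ and checking that the Robin conditions $dV'=\Lambda V$ at both endpoints, combined with $dV''-\Lambda V'\ge -L$, force precisely the claimed form. This is essentially a one-dimensional Green's-function computation for the operator $d\,\partial_{xx}-\Lambda\,\partial_x$ with these specific boundary conditions, so the difficulty is bookkeeping rather than conceptual: one must solve for the two integration constants using the two boundary relations and then bound them using the a priori estimate on $u^*$. Once $\overline V$ is written down, the comparison principle for the elliptic operator $d\,\partial_{xx}-\Lambda\,\partial_x$ (which satisfies the maximum principle) gives $U(a,x)=V(x)\le\overline V(x)$ for all $x\in[0,1]$, uniformly in $a\in[0,a_+]$, which is the asserted estimate.
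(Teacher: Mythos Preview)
Your setup---define $U(a,x)=\int_0^a u^*(s,x)\,ds$, integrate the equation in age, and use the Robin condition $dU_x-\La U=0$ at $x=0,1$---is the right starting point, but the plan to close via a comparison/Green's-function argument for the second-order problem $dV''-\La V'\ge -L$ with Robin data at both ends has a real gap. The operator $d\partial_{xx}-\La\partial_x$ together with these exact Robin conditions has the nontrivial kernel $e^{(\La/d)x}$, so the maximum principle fails and no comparison with a particular $\overline V$ is available without an extra constraint. You recognize this implicitly when you say you need an a~priori bound to ``feed into the boundary term,'' but the bound you propose, $u^*(a,x)\le L e^{(\La/d)x}$, gives only $U(a,1)\le a_+L e^{\La/d}$, which is exponentially large in $\La/d$ and cannot produce the stated constant in front of $e^{-\frac{\La}{d}(1-x)}$.

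The missing idea is the \emph{mass bound}: integrating the equation over $[0,1]$ in $x$ (the no-flux conditions kill the boundary terms) yields $\int_0^1 u^*(a,x)\,dx\le \int_0^1 u^*(0,x)\,dx\le L$ for every $a$. The paper then integrates the equation over $(0,a)\times(0,x)$ (not just in $a$) and uses the Robin condition at $x=0$ to obtain a \emph{two-sided first-order} flux estimate
\[
-L \;\le\; dU_x(a,x)-\La U(a,x) \;\le\; a_+L\|\overline\mu\|_{L^\infty(0,a_+)}+2L \;=\; M_1,
\]
where both bounds use the mass estimate to control $\int_0^x u^*(a,\cdot)$, $\int_0^x u^*(0,\cdot)$ and $\int_0^a\!\int_0^x\mu u^*$. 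This is where $M_1$ actually comes from---not from any pointwise control of $u^*$. Multiplying by $e^{-\La x/d}$ and integrating over $(x,1)$ turns the lower flux bound into a lower bound for $U(a,x)$ in terms of $U(a,1)$; integrating that in $x$ over $[0,1]$ and using the mass bound $\int_0^1 U(a,x)\,dx\le La_+$ then pins down $U(a,1)$ with the stated constant. The upper flux bound, treated the same way, gives $U(a,x)\le e^{-\frac{\La}{d}(1-x)}U(a,1)+\frac{L}{\La}\bigl(1-e^{-\frac{\La}{d}(1-x)}\bigr)$, and combining the two finishes. Your one-sided inequality $dU_{xx}-\La U_x\ge -L$ throws away the $\mu$-term and the $u^*(a,x)$ term, which is exactly the information needed (via the mass bound) for the upper flux estimate; that is why your route cannot recover $M_1$.
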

	
	\begin{proof} Integrating the first equation of \eqref{logistic} over $(0, a)\times[0, 1]$, we obtain
		$$
		\int_0^1(u^*(a, x)-u^*(0, x))dx=-\int_0^1\int_0^a\mu(s, x)u^*(s, x)dsdx.
		$$
		It follows from $u^*>0$ and Assumption \ref{assump-f}-(iv) that 
		\begin{equation}\nonumber
			\int_0^1u^*(a, x)dx\le \int_0^1u^*(0, x)dx\le\int_0^1f\left(x, \int_0^{a_m}\beta(a, x)u^*(a, x)da\right)dx\le L, \;\forall a\in[0, a_+].
		\end{equation}
		Now integrating the first equation of \eqref{logistic} over $(0, a)\times(0, x)$, we obtain
		\begin{eqnarray}
			\int_0^a\left(du^*_x(s, x)-\La u^*(s, x)\right)ds&=&\int_0^a\int_0^x\mu(s, y)u^*(s, y)dyds+\int_0^x(u^*(a, y)-u^*(0, y))dy\nonumber\\
			&\le& a_+L\norm{\overline\mu}_{L^\infty(0, a_+)}+2L=M_1, \;\forall x\in[0, 1].\label{du_x}
		\end{eqnarray}
		We multiply both sides of the above inequality by $e^{-\frac{\La x}{d}}$ and then integrate the resulting inequality over $(x, 1)$ to see
		\begin{eqnarray}\nonumber
			%\label{u(a, 1)ge}
			\int_0^au^*(s, 1)ds\;e^{-\frac{\La}{d}(1-x)}-\frac{M_1}{\La}\left(1-e^{-\frac{\La}{d}(1-x)}\right)\le \int_0^au^*(s, x)ds, \;\forall (a, x)\in[0, a_+]\times[0, 1].
		\end{eqnarray}
		Upon an integration of the above inequality from $0$ to $1$, we find
		\begin{eqnarray}\label{u(a, 1)}
			\int_0^au^*(s, 1)ds\le\frac{\La La_++M_1\left[1-\frac{d}{\La}(1-e^{-\frac{\La}{d}})\right]}{d(1-e^{-\frac{\La}{d}})}.
		\end{eqnarray}
		On the other hand, we can see from \eqref{du_x} that 
		$$
		\int_0^a\left(du^*_x(s, x)ds-\La u^*(s, x)\right)ds\ge-\int_0^xu^*(0, y)dy\ge-L,\ \ x\in[0,1].
		$$
		Multiplying both sides of the above inequality by $e^{-\frac{\La x}{d}}$ and then integrating over $(x, 1)$, we are led to
		\begin{eqnarray}\label{u(a, x)}
			\int_0^au^*(s, x)ds\le \int_0^au^*(s, 1)ds\; e^{-\frac{\La}{d}(1-x)}+\frac{L}{\La}\left(1-e^{-\frac{\La}{d}(1-x)}\right), \;\forall (a, x)\in[0, a_+]\times[0, 1].
		\end{eqnarray}
		We have obtained the desired inequality of $u^*$ by combining \eqref{u(a, 1)} and \eqref{u(a, x)}. 
	\end{proof}
	
	The second lemma reads as follows.
	\begin{lemma}\label{estimate of u^*-2} Let $u^*$ be the positive equilibrium  of \eqref{nonlinear}. For $\La>0$ and small $d>0$, we have 
		\begin{equation}\label{u_d-a}
			\int_0^{a}u^*(s, x)ds\le \frac{\La La_++M_1\left[1-\frac{d}{\La}(1-e^{-\frac{\La}{d}})\right]}{d(1-e^{-\frac{\La}{d}})} \; e^{-\frac{\La}{d}\left(1-\frac{M_2d}{\La^2}\right)(1-x)},\ \ \forall(a, x)\in[0, a_+]\times[0, 1],
		\end{equation}
		where $M_1$ is given in Lemma \ref{estimate of u^*}, $M_2:=2\norm{\overline\be}_{L^\infty(0, a_m)}\max_{[0, 1]}f_u(x, 0)$ and $a_+\in[a_c, a_m)$.
	\end{lemma}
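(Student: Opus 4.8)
The plan is to reduce everything to the endpoint $a=a_+$, recycle the boundary bound already produced inside the proof of Lemma~\ref{estimate of u^*}, use the sub-linearity of $f$ to control the renewal term $u^*(0,\cdot)$ by the very quantity being estimated, and then close the estimate by comparing with the exponential $\overline G(x):=K\,e^{\nu(x-1)}$, where $K$ is the coefficient on the right-hand side of \eqref{u(a, 1)} (which is exactly the prefactor in \eqref{u_d-a}) and $\nu:=\frac{\La}{d}\big(1-\frac{M_2 d}{\La^2}\big)=\frac{\La}{d}-\frac{M_2}{\La}$.

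Since the right-hand side of \eqref{u_d-a} is independent of $a$ and $a\mapsto\int_0^a u^*(s,x)\,ds$ is nondecreasing (because $u^*\ge0$), it suffices to prove \eqref{u_d-a} for $a=a_+$; write $G(x):=\int_0^{a_+}u^*(s,x)\,ds$. From \eqref{u(a, 1)} with $a=a_+$ we already have $G(1)\le K$. Next, Assumption~\ref{assump-f}-(iii) yields $f(x,v)\le f_u(x,0)\,v$ for $v\ge0$, so, using in addition $\beta\equiv0$ on $[a_c,a_m)$,
\[
u^*(0,x)=f\!\Big(x,\int_0^{a_c}\!\beta(a,x)u^*(a,x)\,da\Big)\le f_u(x,0)\!\int_0^{a_c}\!\beta(a,x)u^*(a,x)\,da\le\frac{M_2}{2}\!\int_0^{a_c}\!u^*(s,x)\,ds\le\frac{M_2}{2}\,G(x).
\]
Inserting this into identity \eqref{du_x} taken at $a=a_+$ and discarding the two nonnegative contributions $\int_0^{a_+}\!\!\int_0^x\mu u^*$ and $\int_0^x u^*(a_+,\cdot)$ gives the integro-differential inequality
\[
d\,G'(x)-\La\,G(x)\ \ge\ -\int_0^x u^*(0,y)\,dy\ \ge\ -\frac{M_2}{2}\int_0^x G(y)\,dy,\qquad x\in(0,1),
\]
while the Robin condition in \eqref{logistic} gives $G'(0)=\tfrac{\La}{d}G(0)$ and $G'(1)=\tfrac{\La}{d}G(1)$.

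It then remains to show $G\le\overline G$ on $[0,1]$, which I would do by a ratio/maximum-principle argument, choosing $d$ small enough that $d<\La^2/(2M_2)$ (so $\nu>0$). Observe $\overline G'=\nu\overline G$, $d\nu-\La=-M_2 d/\La$, $\int_0^x\overline G\le\tfrac1\nu\overline G(x)$, and, setting $\chi:=G/\overline G$, that $\chi(1)=G(1)/K\le1$. If $\max_{[0,1]}\chi>1$, this maximum cannot be attained at $x=0$: there $\chi'(0)=\tfrac{M_2}{\La}\chi(0)$, which is positive when $G(0)>0$ (impossible at a left-endpoint maximum), while $G(0)=0$ forces $\chi(0)=0$. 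Nor can it be attained at $x=1$, since $\chi(1)\le1$. Hence it is attained at some interior $x_0\in(0,1)$, where $\chi'(x_0)=0$, i.e. $G'(x_0)=\nu G(x_0)$; plugging this into the displayed inequality and using $\chi\le\chi(x_0)$ on $[0,x_0]$ gives
\[
\frac{2d}{\La}\,G(x_0)\ \le\ \int_0^{x_0}G(y)\,dy\ \le\ \chi(x_0)\int_0^{x_0}\overline G(y)\,dy\ \le\ \frac{\chi(x_0)}{\nu}\,\overline G(x_0)\ =\ \frac1\nu\,G(x_0),
\]
so $\tfrac{2d}{\La}\le\tfrac1\nu$, i.e. $d\ge\La^2/(2M_2)$, a contradiction. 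Thus $G\le\overline G$, which is \eqref{u_d-a} at $a=a_+$, and hence for every $a\in[0,a_+]$. The hard part is this last step: the positive zeroth-order coefficient $\tfrac{M_2}{2}$ together with the nonlocal term $\int_0^x G$ precludes a direct maximum principle for $G$, and it is precisely the passage to the ratio $G/\overline G$ — legitimate because $\overline G$ is a positive supersolution once $d$ is small — that restores the comparison.
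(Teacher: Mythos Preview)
Your argument is correct, and in essence it is the same maximum-principle comparison as the paper's, carried out in slightly different coordinates. The paper makes the substitution $w(a,x)=e^{-\nu x}u^*(a,x)$ with $\nu=\tfrac{\La}{d}\big(1-\tfrac{M_2d}{\La^2}\big)$, sets $V(x)=\int_0^{a_+}w(s,x)\,ds$, derives a second-order equation for $V$ with Robin condition $V_x=\tfrac{M_2}{\La}V$ at $x=0,1$, and then shows by the classical maximum principle (using $V_{xx}(x_*)\le0$ at an interior max) that $V$ attains its maximum at $x_*=1$; the smallness assumption is exactly your $d<\La^2/(2M_2)$. Since $V(x)=e^{-\nu x}G(x)$ is a positive multiple of your ratio $\chi=G/\overline G$, the paper's statement ``$\max V=V(1)$'' is literally your statement ``$\max\chi=\chi(1)\le1$''. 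The only genuine difference is technical: the paper uses the full second-order information ($V_{xx}\le0$), while you integrate once more over $(0,x)$ to obtain the first-order integro-differential inequality $dG'-\La G\ge-\tfrac{M_2}{2}\int_0^xG$ and close via $\chi'(x_0)=0$ together with $\int_0^{x_0}\overline G\le\tfrac1\nu\overline G(x_0)$. Your route avoids differentiating $G$ twice and makes the role of the comparison profile $\overline G$ more explicit; the paper's route is shorter once one spots the substitution. Both land on the same threshold $d<\La^2/(2M_2)$.
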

	
	\begin{proof} We set 
		$$
		w(a, x):=e^{-\frac{\La}{d}\left(1-\frac{M_2d}{\La^2}\right)x}u^*(a, x).
		$$
		Then $w$ satisfies
		\begin{equation}\label{w}
			\begin{cases}        		w_a=dw_{xx}+\La\left(1-2\frac{M_2d}{\La^2}\right)w_x+\left[M_2\left(\frac{M_2d}{\La^2}-1\right)-\mu(a, x)\right]w, &(a, x)\in(0, a_m)\times(0, 1),\\
				w(0, x)=e^{-\frac{\La}{d}\left(1-\frac{M_2d}{\La^2}\right)x}f\left(x, e^{\frac{\La}{d}\left(1-\frac{M_2d}{\La^2}\right)x}\int_{0}^{a_m}\beta(a, x)w(a, x)da\right), &x\in(0, 1),\\
				w_x(a, x)=\frac{M_2}{\La}w(a, x), &(a, x)\in(0, a_m)\times\{0, 1\}.
			\end{cases}
		\end{equation}
		
		For any $a_+\in[a_c, a_m)$, define 
		$$
		V(x)=\int_0^{a_+}w(s, x)ds, \ \ x\in[0,1].
		$$ 
		By integrating the equation \eqref{w} over $[0, a_+]$, we infer
		\begin{equation}\nonumber
			%\label{V}
			\begin{cases}
				dV_{xx}+\La\left(1-2\frac{M_2d}{\La^2}\right)V_x+M_2\left(\frac{M_2d}{\La^2}-1\right)V-w(a_+, x)\\
				\;+e^{-\frac{\La}{d}\left(1-\frac{M_2d}{\La^2}\right)x}f\left(x, e^{\frac{\La}{d}\left(1-\frac{M_2d}{\La^2}\right)x}\int_{0}^{a_m}\beta(a, x)w(a, x)da\right)-\int_0^{a_+}\mu(s, x)w(s, x)ds=0, &x\in(0, 1),\\
				V_x(x)=\frac{M_2}{\La}V(x), &x\in\{0, 1\}.
			\end{cases}
		\end{equation}
		Let $x_*\in[0, 1]$ be such that $V(x_*)=\max_{x\in[0, 1]}V(x)$. Since $V_x(0)=\frac{M_2}{\La}V(0)>0$, then $x_*\neq0$. If $x_*\in(0, 1)$, then $V_x(x_*)=0$ and $V_{xx}(x_*)\le0$. By the equation of $V$, we have
		\begin{eqnarray}\label{ge0}
			&&M_2\left(\frac{M_2d}{\La^2}-1\right)V(x_*)+e^{-\frac{\La}{d}\left(1-\frac{M_2d}{\La^2}\right)x_*}f\left(x, e^{\frac{\La}{d}\left(1-\frac{M_2d}{\La^2}\right)x_*}\int_{0}^{a_m}\beta(a, x_*)w(a, x_*)da\right)\nonumber\\
			&&-w(a_+, x_*)-\int_0^{a_+}\mu(s, x_*)w(s, x_*)ds\ge0.
		\end{eqnarray}
		On the other hand, by the definition of $w$ and Assumption \ref{assump-f}-(iii),  for all $ x\in[0,1] $, there holds
		\begin{eqnarray}
			&&M_2\left(\frac{M_2d}{\La^2}-1\right)V+e^{-\frac{\La}{d}\left(1-\frac{M_2d}{\La^2}\right)x}f\left(x, e^{\frac{\La}{d}\left(1-\frac{M_2d}{\La^2}\right)x}\int_{0}^{a_m}\beta(a, x)w(a, x)da\right)\nonumber\\
			&&-w(a_+, x)-\int_0^{a_+}\mu(s, x)w(s, x)ds\nonumber\\
			&\le&M_2\left(\frac{M_2d}{\La^2}-1\right)V+e^{-\frac{\La}{d}\left(1-\frac{M_2d}{\La^2}\right)x}f\left(x, e^{\frac{\La}{d}\left(1-\frac{M_2d}{\La^2}\right)x}\int_{0}^{a_m}\beta(a, x)w(a, x)da\right)\nonumber\\
			&\le&M_2\left(\frac{M_2d}{\La^2}-1\right)V+\norm{\overline\be}_{L^\infty(0, a_m)}\max_{x\in[0, 1]}f_u(x, 0)V\nonumber\\
			&<&0\nonumber
		\end{eqnarray}
		provided that
		\begin{equation}\label{d-small}
			0<d<\frac{\La^2}{4\norm{\overline\be}_{L^\infty(0, a_m)}\max_{x\in[0, 1]}f_u(x, 0)}.
		\end{equation}
		This is a contradiction against \eqref{ge0} once \eqref{d-small} holds. 
		
		Thus we must have $x_*=1$, i.e., $V(x)\le V(1)$ for all $x\in[0, 1]$. Hence, if \eqref{d-small} holds, by the definitions of $w$ and $V$, one has
		$$
		\int_0^{a_+}u^*(s, x)ds\le \int_0^{a_+}u^*(s, 1)ds \; e^{-\frac{\La}{d}\left(1-\frac{M_2d}{\La^2}\right)(1-x)}, \quad\forall x\in[0, 1].
		$$
		This, together with \eqref{u(a, 1)}, allows one to deduce
		\begin{equation}\nonumber%\label{u_d}
			\int_0^{a_+}u^*(s, x)ds\le \frac{\La La_++M_1\left[1-\frac{d}{\La}(1-e^{-\frac{\La}{d}})\right]}{d(1-e^{-\frac{\La}{d}})} \; e^{-\frac{\La}{d}\left(1-\frac{M_2d}{\La^2}\right)(1-x)}, \quad\forall x\in[0, 1],
		\end{equation}
		which obviously implies \eqref{u_d-a}.
	\end{proof}
	
	With the aid of Lemma \ref{estimate of u^*} and Lemma \ref{estimate of u^*-2}, we are now ready to present the proof of Theorem \ref{AP LA MR}.
	
	\begin{proof}[Proof of Theorem \ref{AP LA MR}] We begin by verifying Theorem \ref{AP LA MR}-(i). From Lemma \ref{estimate of u^*}, we observe that for any \(a \in [0, a_m)\),
		\begin{equation}\label{integral unif converg to 0}
			\int_0^au^*_{\La,\,d}(s, \cdot)ds\to0\ \ \text{ locally uniformly on } [0, 1)\ \ \text{ as } \La\to\infty.
		\end{equation}
		Since
		\[
		u^*_{\Lambda,\,d}(0, x) = f\left(x, \int_{0}^{a_m} \beta(a, x) u^*_{\Lambda,\,d}(a, x) \, da\right), \quad x \in (0, 1),
		\]
		it follows from Assumption \ref{assump-f}-(iii), Assumption \ref{Ass}-(i)(ii), and \eqref{integral unif converg to 0} that \(u^*_{\Lambda,\,d}(0, x) \to 0\) locally uniformly in \([0, 1)\) as \(\Lambda \to \infty\). Moreover, by Assumption \ref{assump-f}-(iv), we have \(u^*_\Lambda(0, x) \le L\) for all \(x \in [0, 1]\) and \(\Lambda > 0\). Thus, the Lebesgue dominated convergence theorem ensures that
		\[
		\int_0^1 u^*_{\Lambda,\,d}(0, x) \, dx \to 0 \quad \text{as } \Lambda \to \infty.
		\]
		
		On the other hand, integrating the first equation of \eqref{logistic} over \([0, 1]\) with respect to \(x\), we obtain
		\[
		\frac{d}{da} \int_0^1 u^*_{\Lambda,\,d}(a, x) \, dx = -\int_0^1 \mu(a, x) u^*_{\Lambda,\,d}(a, x) \, dx < 0, \quad \forall a \in [0, a_m).
		\]
		This implies that \(\int_0^1 u^*_{\Lambda,\,d}(a, x) \, dx\) is decreasing with respect to \(a \in [0, a_m)\). Therefore, for any \(a \in [0, a_m)\),
		\[
		\int_0^1 u^*_{\Lambda,\,d}(a, x) \, dx \to 0 \quad \text{as } \Lambda \to \infty,
		\]
		which proves Theorem \ref{AP LA MR}-(i).
		
		Theorem \ref{AP LA MR}-(ii) can be verified similarly. From Lemma \ref{estimate of u^*-2}, we conclude that for any \(a \in [0, a_m)\),
		\[
		\int_0^a u^*_{\Lambda,\,d}(s, \cdot) \, ds \to 0 \quad \text{locally uniformly in } [0, 1)\ \ \text{ as } d \to 0.
		\]
		Using this fact, we can apply the same reasoning as above to establish Theorem \ref{AP LA MR}-(ii). Thus, the proof is complete.
	\end{proof}
	
	\subsection{Asymptotic profile for small diffusion without advection}
	In this subsection, we focus on the asymptotic profile of \( u^* \) with small diffusion in the absence of advection and provide the proof of Theorem \ref{AP LA WA}. This proof is inspired by the work of L{\'o}pez-G{\'o}mez and his coauthors on singular perturbation results for periodic-parabolic problems with mixed boundary conditions; see \cite{cano2024singular,fernandez2019singular,lopez2013linear}. Here we extend their idea to the age-structured problem \eqref{nonlinear}.
	
	To this end, let $\Ga$ and $h$ be defined as in \eqref{Q} and \eqref{h(x)}, choose any $a_+\in[a_c, a_m)$ and make the following transformations:
	$$
	w^*_d(a, x)=\frac{u^*_d(a, x)}{h(x)}\ \ \text{ and }\ r^*(a, x)=\frac{v^*(a, x)}{h(x)}, \quad (a, x)\in[0, a_+]\times[0, 1],
	$$
	where $v^*$ for any $x\in[0, 1]$ is the unique solution of \eqref{solutionofQ MR} and $u^*_d$ is the unique positive equilibrium of \eqref{nonlinear} with $\La=0$ satisfying
	\begin{equation}\nonumber%\label{logistic4}
		\begin{cases}
			u_a=du_{xx}-\mu(a, x)u, &(a, x)\in(0, a_m)\times(0, 1),\\
			u(0, x)=f\left(x, \int_{0}^{a_m}\beta(a, x)u(a, x)da\right), &x\in(0, 1),\\
			u_x(a, x)=0, &(a, x)\in(0, a_m)\times\{0, 1\}.
		\end{cases}
	\end{equation}
	Recall from Ducrot et al. \cite[Lemmas 5.3 and 5.4]{Ducrot2022Age-structuredII} that (i) $v^*\in C^2(I, W^{1, 1}(0, a_+))$ and $v>0$ in $I$ where $\Ga(x)>1$ for all $x\in I$; (ii) while $v^*\equiv0$ in $I$ where $\Ga(x)\le1$ for all $x\in I$. Then it is apparent that $w^*_d\in W^{1, 1}((0, a_+), W^{2, p}(0, 1))$ with $p>1$ and $r^*\in C^2(I, W^{1, 1}(0, a_+))$, with $w^*_d>0$ and $r^*>0$ in $[0, a_+]\times I$ where $\Ga(x)>1$ for all $x\in I$, which satisfy, respectively, 
	\begin{equation}\label{logistic2}
		\begin{cases}
			w_a=dw_{xx}+\frac{2d h'(x)}{h(x)}w_x+\frac{dh''(x)}{h(x)}w-\mu(a, x)w, &(a, x)\in(0, a_m)\times(0, 1),\\
			w(0, x)=f\left(x, h(x)\int_{0}^{a_m}\beta(a, x)w(a, x)da\right)/h(x), &x\in(0, 1),\\
			w_x(a, x)+\varrho\psi'(x)w(a, x)=0, &(a, x)\in(0, a_m)\times\{0, 1\},
		\end{cases}
	\end{equation} 
	and for each $x\in[0, 1]$,
	\begin{equation}\nonumber%\label{logistic3}
		\begin{cases}
			r_a(a, x)=-\mu(a, x)r(a, x), \quad a\in(0, a_+),\\
			r(0, x)=f\left(x, h(x)\int_{0}^{a_m}\beta(a, x)r(a, x)da\right)/h(x).
		\end{cases}
	\end{equation}
	
	To achieve Theorem \ref{AP LA WA}, it becomes equivalent to show: for any $0<\de, \ep\ll1$, $a_+\in[a_c, a_m)$ and any compact subset $I$ of $(0, 1)$ with $\Ga(x)>1$ for all $x\in I$, there holds
	$$
	r^*(a, x)-\ep\le \liminf\limits_{d\to0}w^*_d(a, x)\le\limsup\limits_{d\to0}w^*_d(a, x)da\le (1+\de)r^*(a, x), \quad \forall(a, x)\in[0, a_+]\times I.
	$$
	
	As a first step, we are going to show
	\begin{proposition}\label{auto}
		Assume $\Gamma(x)>1$ in $[0, 1]$, then for any $a_+\in[a_c, a_m)$ and $0<\de\ll1$, there holds
		\begin{eqnarray}\nonumber%\label{ll}
			\limsup\limits_{d\to0}w^*_d(a, x)\le (1+\de)r^*(a, x), \quad \forall (a, x)\in [0, a_+]\times[0, 1].
		\end{eqnarray}
	\end{proposition}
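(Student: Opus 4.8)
The plan is to prove Proposition~\ref{auto} by a comparison argument: for each small $\epsilon>0$ we construct a positive super-solution $\overline w=\overline w_{d,\epsilon}$ of the stationary problem \eqref{logistic2}, valid for all $d<d_0(\epsilon)$, such that $\overline w_{d,\epsilon}\to(1+\delta)r^*$ after letting $d\to0$ and then $\epsilon\to0$. First I would record that $\Gamma(x)>1$ on $[0,1]$ forces condition \eqref{al max}, so Theorem~\ref{Dlambda La=0} (equivalently Proposition~\ref{asymptotic}(iii)) gives $\lambda_0(d,0)>0$ for all small $d$; hence by Theorem~\ref{stability MR} the positive equilibrium $u^*_d$ exists, is unique and globally attracting. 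Since $f_u>0$ and $f(x,u)/u$ is decreasing, the associated parabolic problem is monotone and sub-linear, so the trajectory issuing from a positive stationary super-solution is non-increasing in $t$ and converges to $w^*_d=u^*_d/h$; therefore $w^*_d\le\overline w$ for any positive super-solution $\overline w$ of \eqref{logistic2}. The problem thus reduces to the construction of $\overline w_{d,\epsilon}$, which (undoing the change of variables) amounts to building a positive super-solution $\overline u=h\,\overline w$ of the original Neumann equilibrium problem \eqref{logistic} with $\Lambda=0$.

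I would take $\overline u(a,x)=(1+\delta)\,\rho_\epsilon(x)\,e^{-\int_0^a\mu(s,x)\,ds}\,e^{\epsilon a}+B_0(a,x)+B_1(a,x)$. The factor $e^{\epsilon a}$ makes $\partial_a\overline u+\mu\overline u=\epsilon\overline u>0$, which furnishes the $O(\epsilon)$ slack needed to dominate the sign-indefinite diffusion error $d\,\partial_{xx}\overline u=O(d)$ once $d<d_0(\epsilon)$; this plays exactly the role that the $\epsilon$-perturbations play in the eigenvalue proofs of Section~\ref{EA}. The coefficient $\rho_\epsilon(x)$ is dictated by the nonlocal initial condition: I would let $\rho_\epsilon(x)$ be the unique positive solution of the scalar sub-linear equation $\rho_\epsilon(x)=f\big(x,\rho_\epsilon(x)B_\epsilon(x)\big)$ with $B_\epsilon(x)=\int_0^{a_c}\beta(a,x)e^{-\int_0^a\mu(s,x)\,ds}e^{\epsilon a}\,da$, which exists and is $C^2$ in $x$ because $f_u(x,0)B_\epsilon(x)>f_u(x,0)B(x)=\Gamma(x)>1$ together with the monotonicity and boundedness of $f$ (the implicit function theorem applies since $1-f_u(x,\rho_\epsilon B_\epsilon)B_\epsilon>0$ at the root, by sub-linearity), and $\rho_\epsilon\to r^*(0,\cdot)$ uniformly as $\epsilon\to0$ since $B_\epsilon\to B$. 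Finally $B_0,B_1$ are López-Gómez-type boundary-layer corrections near $x=0,1$, e.g.\ $B_i(a,x)=\eta(d)\,e^{2a}\,e^{-\mathrm{dist}(x,\{i\})/\sqrt d}$ with amplitude $\eta(d)=O(\sqrt d)$; being exponentially localized and $o(1)$ in amplitude they do not affect the interior limit, and the conjugation by $h=e^{\varrho\psi}$ together with these terms is precisely what handles the matching to the (Neumann, or conjugated Robin) boundary conditions.

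The verification then splits into three parts. (i)~The differential inequality: on the interior part, $\partial_a\overline u-d\partial_{xx}\overline u+\mu\overline u=e^{\epsilon a}(1+\delta)\rho_\epsilon\,e^{-\int_0^a\mu\,ds}\,[\,\epsilon-d\cdot(\text{bounded})\,]\ge0$ for $d<d_0(\epsilon)$, using that $\rho_\epsilon(x)e^{-\int_0^a\mu\,ds}$ is bounded below by a positive constant on $[0,a_+]\times[0,1]$ (here $\Gamma>1$ on \emph{all} of $[0,1]$ is used) and that, by the enhanced $C^2$-regularity in $x$, the second $x$-derivative is uniformly bounded; on the boundary-layer part, $\partial_aB_i-d\partial_{xx}B_i+\mu B_i=\eta(d)e^{2a}e^{-\mathrm{dist}(x,\{i\})/\sqrt d}(1+\mu)\ge0$ directly from $\mu\ge0$. (ii)~The boundary inequalities are produced by the large derivative $\partial_xB_i=\mp\eta(d)d^{-1/2}(\cdots)$ at $x=i$, which dominates the $O(1)$ contribution of the interior profile once $\eta(d)\ge\sqrt d\,C(\epsilon)$. (iii)~The nonlocal inequality $\overline u(0,x)\ge f\big(x,\int_0^{a_c}\beta(a,x)\overline u(a,x)\,da\big)$: writing $\int_0^{a_c}\beta\overline u\,da=(1+\delta)\rho_\epsilon B_\epsilon+O(\eta(d))$ and using sub-linearity, $f\big(x,(1+\delta)\rho_\epsilon B_\epsilon\big)\le(1+\delta)f(x,\rho_\epsilon B_\epsilon)=(1+\delta)\rho_\epsilon=\overline u(0,x)-B_i(0,x)$, so the inequality reduces to the strict sub-linearity gap $(1+\delta)\rho_\epsilon(x)-f\big(x,(1+\delta)\rho_\epsilon(x)B_\epsilon(x)\big)>0$ (uniform in $x$, again by $\Gamma>1$ on $[0,1]$) absorbing the errors $O(\eta(d))$ and $O(\epsilon)$ coming from the boundary layers and from $B_\epsilon>B$; this is what forces $d_0(\epsilon)$ small. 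With $\overline w_{d,\epsilon}=\overline u/h$ in hand, $w^*_d\le\overline w_{d,\epsilon}$ for $d<d_0(\epsilon)$, hence $\limsup_{d\to0}w^*_d(a,x)\le(1+\delta)\rho_\epsilon(x)e^{-\int_0^a\mu(s,x)\,ds}e^{\epsilon a}$, and letting $\epsilon\to0$ gives $\limsup_{d\to0}w^*_d(a,x)\le(1+\delta)r^*(a,x)$.

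The main obstacle is item~(iii), the nonlocal initial condition: in contrast with the purely local singular-perturbation theory of López-Gómez, it couples the entire age profile of $\overline u$ back to its trace at $a=0$, and the very $e^{\epsilon a}$ slack needed to control the diffusion term inflates $\int_0^{a_c}\beta\overline u\,da$, working against the initial inequality, while the boundary layers add a further error. Reconciling these is possible only by exploiting the sub-linearity of $f$ (Assumption~\ref{assump-f}(iii)) quantitatively—the fixed factor $1+\delta$ provides a strict, $x$-uniform gap—and by ordering the limits carefully: $d\to0$ for fixed $\epsilon$, then $\epsilon\to0$. A secondary difficulty, inherited from the singular-perturbation framework, is matching the interior profile to the boundary conditions, which is exactly what the auxiliary function $\psi$, the conjugation by $h=e^{\varrho\psi}$, and the corrector terms $B_0,B_1$ are designed to resolve.
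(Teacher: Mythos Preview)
Your approach is correct and follows the same core strategy as the paper: take $(1+\delta)$ times the limiting profile as a super-solution, use the strict sub-linearity of $f$ (Assumption~\ref{assump-f}(iii)) to get the nonlocal initial inequality from the $(1+\delta)$ factor, and introduce an $\epsilon$-slack to absorb the $O(d)$ diffusion error. Your $e^{\epsilon a}$/$\rho_\epsilon$ construction is in fact a clean implementation of this slack (the paper instead perturbs the comparison equation by $-\epsilon\,\overline r_\delta$ and then lets $\epsilon\to0$, which amounts to the same thing).

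The one genuine difference is that your boundary-layer correctors $B_0,B_1$ are unnecessary. The paper works directly in the $w$-variable and takes the super-solution to be simply $\overline r_\delta=(1+\delta)r^*$, with no correctors at all. The whole purpose of the free parameter $\varrho$ in $h=e^{\varrho\psi}$ is that, since $\psi'(1)>0$, $\psi'(0)<0$ and $r^*>0$ on $[0,a_+]\times[0,1]$ (here $\Gamma>1$ everywhere is used), one can choose $\varrho$ large enough that the Robin inequalities
\[
r^*_x(a,1)+\varrho\psi'(1)r^*(a,1)\ge0,\qquad r^*_x(a,0)+\varrho\psi'(0)r^*(a,0)\le0
\]
hold automatically for all $a\in[0,a_+]$. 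So the conjugation alone handles the boundary matching, and the entire proof reduces to a few lines: check the differential inequality (trivial once $d$ is small), check the initial inequality (one line from sub-linearity), done. Your version works but carries extra machinery that the paper's choice of $\varrho$ was designed to eliminate.
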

	\begin{proof}
		Define $\overline r_\de(a, x):=(1+\de)r^*(a, x)$ and then $\overline r_\de>0$.  
		By choosing 
		$$
		\varrho>\max\left\{0,\ \ \max_{a\in[0, a_+]}\frac{-r^*_x(a, 1)}{r^*(a, 1)}\frac{1}{\psi'(1)}, \; \max_{a\in[0, a_+]}\frac{r^*_x(a, 0)}{r^*(a, 0)}\frac{1}{-\psi'(0)}\right\},
		$$
		one can check that 
		\begin{equation}\label{r derivative}
			r^*_x(a, 1)+\varrho r^*(a, 1)\psi'(1)\ge0 \text{ and }r^*_x(a, 0)+\varrho r^*(a, 0)\psi'(0)\le0, \quad \forall a\in[0, a_+].
		\end{equation}
		Observe that due to \eqref{r derivative}, $\overline r_\de(a, x)$ satisfies
		\begin{equation}\nonumber%\label{s}
			\begin{cases}
				(\overline r_\de)_a=-\mu(a, x)\overline r_\de, &(a, x)\in(0, a_+)\times(0, 1),\\
				\overline r_\de(0, x)=(1+\de)f\left(x, h(x)\int_{0}^{a_m}\beta(a, x)r^*(a, x)da\right)/h(x), &x\in(0, 1),\\
				(\overline r_\de)_x(a, 1)+\varrho\overline r_\de(a, 1)\psi'(1)\ge0, &a\in(0, a_+),\\
				(\overline r_\de)_x(a, 0)+\varrho\overline r_\de(a, 0)\psi'(0)\le0, &a\in(0, a_+).
			\end{cases}
		\end{equation}
		Recalling the operator $L$ defined in \eqref{L}, one has $\norm{dL\overline r_\de}_{L^\infty((0, a_+)\times(0, 1))}\to0$ as $d\to0$. Thus for any $\ep>0$, one can choose $d$ sufficiently small such that 
		$$
		0<d\le\frac{\ep \min_{[0, a_+]\times[0, 1]}\overline r_\de(a, x)}{\norm{L\overline r_\de}_{L^\infty((0, a_+)\times(0, 1))}},
		$$ 
		and so $\overline r_\de$ satisfies
		$$
		(\overline r_\de)_a\ge d(\overline r_\de)_{xx}+\frac{2d h'(x)}{h(x)}(\overline r_\de)_x+\frac{dh''(x)}{h(x)}\overline r_\de-\mu(a, x)\overline r_\de-\ep \overline r_\de.
		$$
		On the other hand, the initial data $\overline r_\de(0, x)$ satisfies
		\begin{eqnarray}
			\overline r_\de(0, x)&=&(1+\de)f\left(x, h(x)\int_{0}^{a_m}\beta(a, x)r^*(a, x)da\right)/h(x)\nonumber\\
			&>&f\left(x, h(x)\int_{0}^{a_m}\beta(a, x)(1+\de)r^*(a, x)da\right)/h(x),\nonumber\\
			&=&f\left(x, h(x)\int_{0}^{a_m}\beta(a, x)\overline r_\de(a, x)da\right)/h(x),\nonumber
		\end{eqnarray}
		where we used the Assumption \ref{assump-f}-(iii) for the strict inequality. Thus by letting $\ep\to0$, the comparison principle as applied to \eqref{logistic2} restricted on $[0, a_+]\times[0, 1]$ yields 
		$$
		\limsup\limits_{d\to0}w^*_d(a, x)\le \overline r_\de(a, x), \quad \forall (a, x)\in [0, a_+]\times[0, 1]. 
		$$
		Now the proof is complete.
	\end{proof}
	Proposition \ref{auto} implies that if $\Ga(x)>1$ in $[0, 1]$, then
	\begin{eqnarray}\label{sub f}
		\limsup\limits_{d\to0}u^*_d(a, x)\le (1+\de) v^*(a, x), \; \forall(a, x)\in[0, a_+]\times [0, 1]. 
	\end{eqnarray}
	Next, we will show the reverse inequality, i.e. 
	\begin{eqnarray}\nonumber%\label{sup f}
		\liminf\limits_{d\to0}u^*_d(a, x)\ge v^*(a, x)-\ep, \; (a, x)\in[0, a_+]\times I\ \, \text{ for any $I\subset(0, 1)$, where $\Ga(x)>1$ in $I$.} 
	\end{eqnarray}
	
	For the sake of clarity,  we first consider the homogeneous case where $\mu=\mu(a), \be=\be(a)$ and $f=f(u)$, and then handle the general case. Thus, we divide our analysis into two subsections.
	
	\subsubsection{The homogeneous case}
	In this subsection, we choose $a_+\in[a_c, a_m)$ and work on the homogeneous case: $\mu=\mu(a), \be=\be(a)$ and $f=f(u)$ with $\be\equiv0$ in $[a_c, a_m)$. Consider the following two equations:
	\begin{equation}\label{v(a)}
		\begin{cases}
			v_a=-\mu(a)v, &a\in(0, a_+),\\
			v(0)=f(\int_0^{a_+}\be(a)v(a)da),
		\end{cases}
	\end{equation}
	and
	\begin{equation}\label{Dirichlet}
		\begin{cases}
			w_a=d w_{xx}-\mu(a)w, &(a, x)\in(0, a_m)\times(0, 1),\\
			w(a, x)=0, &(a, x)\in(0, a_m)\times\{0, 1\},\\
			w(0, x)=f\left(\int_{0}^{a_m}\beta(a)w(a, x)da\right), &x\in(0, 1).
		\end{cases}
	\end{equation}
	Then we have the following result.
	\begin{proposition}\label{Homo Dirichlet}
		Assume that
		\begin{equation}\label{homo>1}
			f'(0)\int_0^{a_c}\be(a)e^{-\int_0^a\mu(s)ds}da>1,
		\end{equation} 
		then the solutions of \eqref{v(a)} and \eqref{Dirichlet} with $d>0$ sufficiently small, denoted by $v^*$ and $\widetilde w^*_d$ respectively, exist and are unique. Moreover, for any $0<\ep\ll1$, any $a_+\in[a_c, a_m)$ and any compact subset $I$ of $(0, 1)$, there holds that
		\begin{eqnarray}\label{gg Dirichlet}
			\liminf\limits_{d\to0}\widetilde w^*_d(a, x)\ge v^*(a)-\ep, \quad \forall (a, x)\in [0, a_+]\times I.
		\end{eqnarray}
	\end{proposition}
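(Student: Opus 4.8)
These are routine. Since $\be\equiv0$ on $[a_c,a_m)$, every solution of \eqref{v(a)} satisfies $v(a)=v(0)e^{-\int_0^a\mu(s)ds}$, so \eqref{v(a)} reduces to the scalar equation $s=f(sC_0)$ with $C_0:=\int_0^{a_c}\be(a)e^{-\int_0^a\mu(s)ds}da$; by Assumption \ref{assump-f}-(iii) the ratio $f(sC_0)/s$ is strictly decreasing with limit $f'(0)$ as $s\to0^+$, so \eqref{homo>1} (which says $f'(0)C_0>1$) together with $f\le L$ yields a unique positive root, hence a unique $v^*$. For $\widetilde w^*_d$, separation of variables shows the principal eigenvalue of the linearization of \eqref{Dirichlet} at $0$ is the root $\la$ of $f'(0)\int_0^{a_c}\be(a)e^{-(d\pi^2+\la)a}e^{-\int_0^a\mu(s)ds}da=1$, which tends, as $d\to0$, to the unique $\bar\al$ with $f'(0)\int_0^{a_c}\be(a)e^{-\bar\al a}e^{-\int_0^a\mu(s)ds}da=1$, and $\bar\al>0$ by \eqref{homo>1}; thus this eigenvalue is positive for all small $d$, and existence/uniqueness of the positive equilibrium $\widetilde w^*_d$ then follow from the Dirichlet counterpart of the logistic theory used for Theorem \ref{stability MR} (cf. Ducrot et al. \cite{Ducrot2022Age-structuredII}).

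\textbf{The subsolution.} The estimate \eqref{gg Dirichlet} will come from a single stationary generalized subsolution of \eqref{Dirichlet}, supported on a slightly shrunk interval containing $I$, whose age-profile satisfies the nonlocal renewal equation with equality. Fix $\ep>0$. Introduce an auxiliary parameter $\mu_0>0$ and let $\tilde v_{\mu_0}(a)=\tilde v_{\mu_0}(0)e^{-\mu_0 a-\int_0^a\mu(s)ds}$ be the positive solution of $v_a=-(\mu_0+\mu(a))v$, $v(0)=f\big(\int_0^{a_c}\be(a)v(a)da\big)$; exactly as above it exists and is unique for small $\mu_0$, and by continuous dependence $\tilde v_{\mu_0}\to v^*$ uniformly on $[0,a_+]$ as $\mu_0\to0$, so we may fix $\mu_0$ with $\tilde v_{\mu_0}(a)\ge v^*(a)-\ep$ on $[0,a_+]$. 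Choose $\sigma>0$ so small that $I\subset[2\sigma,1-2\sigma]$, and a cutoff $\eta_\sigma\in C^2([\sigma,1-\sigma])$ with $\eta_\sigma(\sigma)=\eta_\sigma(1-\sigma)=0$, $0\le\eta_\sigma\le1$, $\eta_\sigma\equiv1$ on $[2\sigma,1-2\sigma]$, $\eta_\sigma'(\sigma)>0>\eta_\sigma'(1-\sigma)$, and $\eta_\sigma$ convex near $\{\sigma,1-\sigma\}$, so that $\eta_\sigma\ge c_\sigma>0$ at every point where $\eta_\sigma''<0$. Then define, for $(a,x)\in[0,a_+]\times[\sigma,1-\sigma]$,
\[
\underline w(a,x)=\tilde v_{\mu_0}(a)\,\eta_\sigma(x)=\tilde v_{\mu_0}(0)\,e^{-\mu_0 a-\int_0^a\mu(s)ds}\,\eta_\sigma(x),
\]
extended by $0$ to $[0,a_+]\times[0,1]$ (and by the same exponential formula in the age variable if one prefers to compare on all of $(0,a_m)$).

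\textbf{Verification and comparison.} One checks three things. (i) On $(0,a_+)\times(\sigma,1-\sigma)$ the parabolic inequality $\underline w_a\le d\underline w_{xx}-\mu(a)\underline w$ reduces, after dividing by $\tilde v_{\mu_0}(a)>0$, to $\mu_0\eta_\sigma(x)+d\eta_\sigma''(x)\ge0$, which is trivial where $\eta_\sigma''\ge0$ and holds elsewhere once $d\le\mu_0 c_\sigma/\|\eta_\sigma''\|_{L^\infty}$. (ii) The zero extension has convex corners at $x=\sigma,1-\sigma$ and vanishes on $\{0,1\}$, so it fits Definition \ref{super/sub} and is Dirichlet-admissible. (iii) The renewal inequality $\underline w(0,x)\le f\big(\int_0^{a_m}\be(a)\underline w(a,x)da\big)$ is exactly $t\le f(t\,\widetilde C_{\mu_0})$ with $t=\tilde v_{\mu_0}(0)\eta_\sigma(x)\in[0,\tilde v_{\mu_0}(0)]$ and $\widetilde C_{\mu_0}:=\int_0^{a_c}\be(a)e^{-\mu_0 a-\int_0^a\mu(s)ds}da$, which holds by the sublinearity of $f$ precisely because $\tilde v_{\mu_0}(0)\widetilde C_{\mu_0}$ is the threshold value at which $f(u)/u=1/\widetilde C_{\mu_0}$. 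Since moreover $0\le\underline w\le L$ and the constant $L$ is a supersolution of \eqref{Dirichlet}, the sub-/super-solution method for this monotone, uniquely solvable problem — the Dirichlet analogue of the comparison arguments already used for \eqref{logistic2} — gives $\widetilde w^*_d\ge\underline w$ on $[0,a_+]\times[0,1]$ for all small $d$. Restricting to $x\in I$, where $\eta_\sigma\equiv1$, yields $\widetilde w^*_d(a,x)\ge\tilde v_{\mu_0}(a)\ge v^*(a)-\ep$, and letting $d\to0$ proves \eqref{gg Dirichlet}.

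\textbf{Where the difficulty lies.} The heart of the matter is that the nonlocal renewal condition completely dictates the subsolution: a naive multiple of a Dirichlet principal eigenfunction cannot work, since sublinearity forces $f(u)<f'(0)u$, so one is driven both to the exact amplitude $\tilde v_{\mu_0}(0)$ and to a plateau-type cutoff that equals $1$ on $I$ rather than a sine profile. The auxiliary decay $\mu_0>0$ is also indispensable — a nonnegative convex function vanishing at both endpoints of an interval must vanish identically — and it is exactly $\mu_0$ that absorbs the unavoidable concavity of $\eta_\sigma$, at the cost of an extra decay that is removed only in the final limit $\mu_0\to0$. The remaining, more technical, point is to make sure the comparison principle genuinely applies to this nonlocal age-structured Dirichlet problem for a stationary subsolution with interior corners; this is where monotonicity of $f$ (Assumption \ref{assump-f}-(ii)) and uniqueness of the positive equilibrium enter, paralleling the comparison steps already used in the paper and in Ducrot et al. \cite{Ducrot2022Age-structuredII}.
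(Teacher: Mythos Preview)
Your proof is correct and reaches the same conclusion, but by a genuinely different construction than the paper's. The paper also builds a separable sub-solution $\de v^*(a)\varphi_1(x)$, where $\varphi_1$ is the Dirichlet principal eigenfunction on a small ball $J=(x_0-R,x_0+R)$; the concavity of $\varphi_1$ is absorbed by an $\ep$-perturbation of the PDE ($+\ep w$), and the renewal inequality follows from sublinearity exactly as in your step (iii). Since $\varphi_1$ attains its maximum only at the center $x_0$, the paper then shrinks to a smaller ball and covers $I$ by finitely many such balls via compactness. Your route instead takes a single plateau cutoff $\eta_\sigma$ that is identically $1$ on all of $I$, and shifts the perturbation from the PDE into the ODE (replacing $\mu$ by $\mu_0+\mu$ so that $\tilde v_{\mu_0}$ carries the extra decay needed to offset $\eta_\sigma''<0$). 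This buys you a one-shot comparison with no covering argument and a sub-solution of the \emph{unperturbed} Dirichlet problem; the paper's eigenfunction route stays closer to classical objects but needs the localization/covering step. Your closing remark that ``a naive multiple of a Dirichlet principal eigenfunction cannot work'' is slightly overstated: the paper shows it \emph{does} work, provided one takes the eigenfunction on a small ball and then covers---what your plateau construction avoids is precisely that covering.
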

	\begin{proof}
		First note that the existence and uniqueness of positive solution of \eqref{v(a)} and \eqref{Dirichlet} for $d>0$ sufficiently small are guaranteed by \eqref{homo>1} and Kang \cite[Theorem 4.2]{kang2022effects}. Choose $x_0\in(0, 1)$ arbitrary and take $R>0$ such that $J:=(x_0-R, x_0+R)\subset(0, 1)$ and $\overline J=[x_0-R, x_0+R]\subset (0, 1)$. 
		
		Denote by $(\sigma_1, \varphi_1)$ the principal eigenpair of \begin{equation}\label{Diri}
			\begin{cases}\nonumber
				\varphi_{xx}=\sig \varphi, &\text{ in }J,\\
				\varphi=0, &\text{ on }\partial J:=\{x_0-R, x_0+R\},
			\end{cases}
		\end{equation} 
		Normalize the eigenfunction by $\norm{\varphi_1}_{\infty}:=\max_{x\in[x_0-R, x_0+R]}\varphi_1(x)=1$. Note that $\sig_1<0$. Then one can verify that $\psi:=\de v^*\varphi_1$ with $\de\in(0, 1)$ is a positive strict sub-solution of 
		\begin{equation}\nonumber%\label{Dirichlet ep}
			\begin{cases}
				w_a=d w_{xx}-\mu(a)w+\ep w, &(a, x)\in(0, a_+)\times J,\\
				w(a, x)=0, &(a, x)\in(0, a_+)\times\partial J,\\
				w(0, x)=f\left(\int_{0}^{a_+}\beta(a)w(a, x)da\right), &x\in J,
			\end{cases}
		\end{equation}
		provided $d>0$ sufficiently small. Indeed, if $d<-\ep/\sig_1$, then 
		\begin{eqnarray}\nonumber
			\begin{cases}
				\psi_a-d\psi_{xx}+\mu(a)\psi-\ep\psi=-d\sig_1\psi-\ep\psi<0, &(a, x)\in(0, a_+)\times J,\\
				\psi(a, x)=0, &(a, x)\in(0, a_+)\times\partial J,\\
				\psi(0, x)=\de\varphi_1f\left(\int_0^{a_+}\be(a)v^*(a)da\right)\le f\left(\int_{0}^{a_+}\beta(a)\psi(a, x)da\right), &x\in J.
			\end{cases}
		\end{eqnarray} 
		Thus the comparison principle applied to \eqref{Dirichlet} restricted on $[0, a_+]\times[x_0-R, x_0+R]$ gives that $\liminf\limits_{d\to0}\widetilde w_d^*(a, x)\ge\de v^*(a)\varphi_1(x)$ on $[0, a_+]\times[x_0-R, x_0+R]$. 
		
		Since $\varphi_1$ attains its maximal value at $x_0$, for any $\ep>0$, one can choose $R_1$ sufficiently small and $\de$ sufficiently close to $1$ such that
		$$
		\psi=\de v^*\varphi_1\ge v^*-\ep\ \;\text{ on }[0, a_+]\times[x_0-R_1, x_0+R_1].
		$$
		It follows that $\liminf\limits_{d\to0}\widetilde w_d^*(a, x)\ge v^*(a)-\ep$ on $[0, a_+]\times[x_0-R_1, x_0+R_1]$.
		
		Next we will prove \eqref{gg Dirichlet} in $[0, a_+]\times I$ for any compact subset $I$ of $(0, 1)$. According to the previous arguments, for every $x\in I$, there exists $R_1(x)>0$ such that $(x-R_1(x), x+R_1(x))\subset(0, 1)$ and 
		\begin{eqnarray}\label{d_i Diri}
			\liminf_{d\to0}\tilde w^*_d(a, x)\ge v^*(a)-\ep\ \;\text{ on }[0, a_+]\times[x-R_1, x+R_1].
		\end{eqnarray}
		By the compactness of $I$, there are an integer $m\ge1$ and $m$ points $x_i\in I, i\in\{1, 2,\dots, m\}$ such that $I\subset \cup_{i=1}^m(x_i-R_1(x_i), x_i+R_1(x_i))\subset(0, 1)$. Thus by sending these $d=d(x_i)$ in \eqref{d_i Diri} to $0$, one has
		$$
		\liminf_{d\to0}\tilde w^*_d(a, x)\ge v^*(a)-\ep \;\text{ on }[0, a_+]\times I.
		$$ 
		It follows that \eqref{gg Dirichlet} holds.
	\end{proof}

	\subsubsection{The general case}
	In this subsection, we continue to choose any $a_+\in[a_c, a_m)$ and consider the general case and finish the proof of Theorem \ref{AP LA WA}. Let $\Ga$ be defined as in \eqref{Q}.
	\begin{proposition}\label{General}
		Let $I$ be a compact subset of $(0, 1)$ and satisfy $\Ga(x)>1$ for all $x\in I$. Then for any $0<\ep\ll1$ and any $a_+\in[a_c, a_m)$, there holds
		\begin{eqnarray}\label{gg general}
			\liminf\limits_{d\to0}u^*_d(a, x)\ge v^*(a, x)-\ep, \quad \forall (a, x)\in [0, a_+]\times I.
		\end{eqnarray}
	\end{proposition}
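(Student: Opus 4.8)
The plan is to reduce the general case to the homogeneous case, Proposition \ref{Homo Dirichlet}, by \emph{localizing} near an arbitrary point of $I$ and \emph{freezing} the coefficients there. Since $v^*\in C^2(I,W^{1,1}(0,a_+))$ is continuous in $x$ and $I$ is compact, it suffices to establish the following local estimate: for every $x_0\in I$ there is $R_1>0$ with $[x_0-R_1,x_0+R_1]\subset(0,1)$ such that $\liminf_{d\to0}u^*_d(a,x)\ge v^*(a,x)-\epsilon$ for all $(a,x)\in[0,a_+]\times[x_0-R_1,x_0+R_1]$; a finite subcover of $I$ by such neighbourhoods then gives \eqref{gg general}.

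Fix $x_0\in I$. For small $R>0$ with $\overline J_R:=[x_0-R,x_0+R]\subset(0,1)$, I would introduce the frozen minorant data
$$\mu_R(a):=\max_{x\in\overline J_R}\mu(a,x),\qquad \beta_R(a):=\min_{x\in\overline J_R}\beta(a,x),\qquad \widetilde f_R(u):=\min_{x\in\overline J_R}f(x,u)$$
(one may further smooth $\widetilde f_R$ in $u$ if $C^{2,1}$-regularity is wanted). These data satisfy the structural hypotheses of Assumptions \ref{Ass}--\ref{assump-f} in $a$ and $u$, and, as $R\to0$, one has $\mu_R\to\mu(\cdot,x_0)$ in $L^\infty(0,a_+)$, $\beta_R\to\beta(\cdot,x_0)$ in $L^\infty(0,a_m)$ and $\widetilde f_R\to f(x_0,\cdot)$ locally uniformly; hence, since $\Gamma(x_0)>1$ by hypothesis, the homogeneous threshold condition \eqref{homo>1} holds for the triple $(\mu_R,\beta_R,\widetilde f_R)$ for all sufficiently small $R$. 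Denote by $\widehat v_R$ the unique positive solution of \eqref{v(a)} and by $\widetilde w^*_{d,R}$ (for $d$ small) the unique positive solution of \eqref{Dirichlet} posed on $(0,a_m)\times J_R$ with Dirichlet data on $\partial J_R$, both with $(\mu,\beta,f)$ replaced by $(\mu_R,\beta_R,\widetilde f_R)$; their existence and uniqueness follow from Kang \cite[Theorem 4.2]{kang2022effects} as in Proposition \ref{Homo Dirichlet}, and continuous dependence on the coefficients gives $\widehat v_R\to v^*(\cdot,x_0)$ uniformly on $[0,a_+]$ as $R\to0$.

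The crux is to compare $u^*_d$ with $\widetilde w^*_{d,R}$. On $\overline J_R$ we have $\mu(a,x)\le\mu_R(a)$, so $(u^*_d)_a-d(u^*_d)_{xx}+\mu_R(a)u^*_d=(\mu_R(a)-\mu(a,x))u^*_d\ge0$; also $u^*_d>0\ge0$ on $\partial J_R$; and, using $\beta(a,x)\ge\beta_R(a)$, the monotonicity of $f(x,\cdot)$, $f(x,\cdot)\ge\widetilde f_R$, and $\beta\equiv0$ on $[a_c,a_m)$,
$$u^*_d(0,x)=f\Big(x,\int_0^{a_+}\beta(a,x)u^*_d(a,x)\,da\Big)\ \ge\ \widetilde f_R\Big(\int_0^{a_+}\beta_R(a)u^*_d(a,x)\,da\Big),\qquad x\in J_R.$$
Thus $u^*_d|_{(0,a_m)\times\overline J_R}$ is a nonnegative super-solution of the frozen $J_R$-Dirichlet logistic problem, whose unique positive equilibrium $\widetilde w^*_{d,R}$ is globally stable among positive states (cf. Theorem \ref{stability MR} and Ducrot et al. \cite{Ducrot2022Age-structuredII}); arguing exactly as in the comparison step of the proof of Proposition \ref{auto}, this forces $u^*_d\ge\widetilde w^*_{d,R}$ on $[0,a_+]\times J_R$ for all sufficiently small $d>0$. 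Applying Proposition \ref{Homo Dirichlet} to the triple $(\mu_R,\beta_R,\widetilde f_R)$ --- its proof carries over verbatim with $(0,1)$ replaced by $J_R$ and the compact set taken to be $[x_0-R/2,x_0+R/2]$ --- yields $\liminf_{d\to0}\widetilde w^*_{d,R}(a,x)\ge\widehat v_R(a)-\tfrac{\epsilon}{3}$ on $[0,a_+]\times[x_0-R/2,x_0+R/2]$. Combining the last two bounds and then choosing $R$ so small that $\widehat v_R\ge v^*(\cdot,x_0)-\tfrac{\epsilon}{3}$ on $[0,a_+]$ and $|v^*(a,x)-v^*(a,x_0)|<\tfrac{\epsilon}{3}$ for $|x-x_0|\le R/2$, we obtain $\liminf_{d\to0}u^*_d(a,x)\ge v^*(a,x)-\epsilon$ on $[0,a_+]\times[x_0-R/2,x_0+R/2]$, which is the desired local estimate with $R_1=R/2$. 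Together with Proposition \ref{auto}, this will complete the proof of Theorem \ref{AP LA WA}.

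I expect the main obstacle to be making the comparison fully rigorous: the age-structured problems carry a nonlocal initial-age condition, so the assertion ``a nonnegative super-solution dominates the globally stable positive equilibrium'' must be invoked in the precise form available for such systems, relying on the well-posedness and global-stability theory of Ducrot et al. \cite{Ducrot2022Age-structuredII} (and Kang \cite{kang2022effects} for the frozen Dirichlet problem). The remaining ingredients --- that $(\mu_R,\beta_R,\widetilde f_R)$ satisfy the structural hypotheses and depend continuously on $R$ at $R=0$, and that Proposition \ref{Homo Dirichlet} is insensitive to the choice of spatial interval --- are routine but need to be checked carefully.
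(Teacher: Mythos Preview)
Your proposal is correct and follows essentially the same strategy as the paper: localize at $x_0\in I$, freeze the coefficients on a small ball by taking $\mu_R=\max_x\mu$, $\beta_R=\min_x\beta$, $\widetilde f_R=\min_x f$, compare $u^*_d$ from above with the frozen Dirichlet equilibrium on that ball, invoke Proposition~\ref{Homo Dirichlet}, and finish by a finite cover of $I$. The only notable differences are cosmetic: the paper inserts an intermediate Dirichlet problem on $B_{2R}(x_0)$ with the original variable coefficients (equation \eqref{B_2R}) before passing to the frozen one, and it verifies the continuous dependence $\widehat v_R\to v^*(\cdot,x_0)$ via the explicit formulas \eqref{v^*}--\eqref{r^*} rather than by an abstract argument, but neither changes the substance.
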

	\begin{proof} 	Pick any $\ep>0$. We first claim that for any $x_0\in I$ with $\Ga(x_0)>1$, there exists $R=R(x_0)>0$ such that $B_{2R}(x_0):=(x_0-2R, x_0+2R)\subset (0, 1)$ and 
		\begin{equation}\label{claim11}
			\liminf_{d\to0}u^*_d(a, x)\ge v^*(a, x)-2\ep,\ \; \forall (a, x)\in [0, a_+]\times \overline B_{R}(x_0):=[0, a_+]\times[x_0-R, x_0+R].
		\end{equation}
		
		To the end, for any $x_0\in I$ fulfilling
		\begin{equation}\label{general}
			f_u(x_0, 0)\int_0^{a_+}\be(a, x_0)e^{-\int_0^a\mu(s, x_0)ds}da>1,
		\end{equation}
		we can take $R=R(x_0, \ep)>0$ sufficiently small such that $B_{2R}(x_0)\subset (0, 1)$ and 
		\begin{eqnarray}\label{v^* le v}
			v^*(a, x)\le v^*_{\mu_{\max,B_{2R}(x_0)}, \be_{\min, B_{2R}(x_0)}, f_{\min, B_{2R}(x_0)}}(a)+\ep\ \; \text{ on }[0, a_+]\times\overline B_{2R}(x_0),
		\end{eqnarray}
		where $v^*_{\mu_{\max,B_{2R}(x_0)}, \be_{\min, B_{2R}(x_0)}, f_{\min, B_{2R}(x_0)}}$ denotes the unique positive solution of 
		\begin{equation}\label{vvv}
			\begin{cases}
				v_a=-\mu_{\max,B_{2R}(x_0)}(a)v, &a\in(0, a_+),\\
				v(0)=f_{\min, B_{2R}(x_0)}\left(\int_0^{a_+}\be_{\min, B_{2R}(x_0)}(a)v(a)da\right)
			\end{cases}
		\end{equation}
		with
		$$
		\begin{cases}
			\mu_{\max,B_{2R}(x_0)}(a):=\max_{x\in\overline B_{2R}(x_0)}\mu(a, x),\\
			\be_{\min, B_{2R}(x_0)}(a):=\min_{x\in\overline B_{2R}(x_0)}\be(a, x),\\
			f_{\min, B_{2R}(x_0)}(u):=\min_{x\in\overline B_{2R}(x_0)}f(x, u).
		\end{cases}
		$$
		
		In what follows, we verify \eqref{v^* le v}. In fact, one can choose $R$ small enough such that 
		\begin{eqnarray}\label{cont}
			\begin{cases}
				|\mu(a, x)-\mu_{\max, B_{2R}(x_0)}(a)|\le C\ep,\\
				|\be(a, x)-\be_{\min, B_{2R}(x_0)}(a)|\le C\ep,\\
				|f(x, u)-f_{\min, B_{2R}(x_0)}(u)|\le C\ep,
			\end{cases}
			\text{ for all }(a, x)\in [0, a_+]\times\overline B_{2R}(x_0) \text{ with some }C>0.
		\end{eqnarray}
		Because of \eqref{cont}, by requiring $\ep$ to be smaller if necessary, we get from \eqref{general} that
		\begin{eqnarray}
			f'_{\min, B_{2R}(x_0)}(0)\int_0^{a_+}\be_{\min, B_{2R}(x_0)}(a)e^{-\int_0^a\mu_{\max, B_{2R}(x_0)}(s)ds}da>1,\label{homohomo>1}
		\end{eqnarray}
		which shows that the solution $v^*_{\mu_{\max,B_{2R}(x_0)}, \be_{\min, B_{2R}(x_0)}, f_{\min, B_{2R}(x_0)}}$ of \eqref{vvv} exists and is unique. 
		
		Due to Assumption \ref{assump-f}-(iii), for any $x\in[0, 1]$, the function $g(x, u):=\frac{f(x, u)}{u}$ is invertible and monotone. It follows that the function $\tilde g(u):=\frac{f_{\min, B_{2R}(x_0)}(u)}{u}$ is also invertible and monotone. Denote
		\begin{equation}\nonumber%\label{bar theta} 
			\theta(x_0):=\tilde g^{-1}\left(\frac{1}{\int_0^{a_+}\be_{\min, B_{2R}(x_0)}(a)e^{-\int_0^a\mu_{\max, B_{2R}(x_0)}(s)ds}da}\right)>0,
		\end{equation}
		which implies (by \eqref{vvv}) that
		$$
		\theta(x_0)=v(0)\int_0^{a_+}\be_{\min, B_{2R}(x_0)}(a)e^{-\int_0^a\mu_{\max,B_{2R}(x_0)}(s)ds}da.
		$$
		Thus the solution of \eqref{vvv} can be written as 
		\begin{equation}\label{v^*}
			v^*_{\mu_{\max,B_{2R}(x_0)}, \be_{\min, B_{2R}(x_0)}, f_{\min, B_{2R}(x_0)}}(a)=\frac{\theta(x_0) e^{-\int_0^a\mu_{\max,B_{2R}(x_0)}(s)ds}}{\int_0^{a_+}\be_{\min, B_{2R}(x_0)}(a)e^{-\int_0^a\mu_{\max,B_{2R}(x_0)}(s)ds}da}.
		\end{equation}
		Similarly, for each $x\in B_{2R}(x_0)$, by \eqref{general} again, \eqref{solutionofQ MR} has a unique solution 
		\begin{equation}\label{r^*}
			v^*(a, x)=\frac{\theta(x)e^{-\int_0^a\mu(s, x)ds}}{\int_0^{a_+}\be(a, x)e^{-\int_0^a\mu(s, x)ds}da},
		\end{equation}
		where 
		\begin{equation}\nonumber%\label{theta}
			\theta(x):=g^{-1}\left(x, \frac{1}{\int_0^{a_+}\be(a, x)e^{-\int_0^a\mu(s, x)ds}da}\right)>0,\; x\in B_{2R}(x_0).
		\end{equation}
		As a result, $|\theta(x)-\theta(x_0)|\le C\ep$ for all $x\in\overline B_{2R}(x_0)$ by \eqref{cont}. By the continuity of compositions of functions, \eqref{v^* le v} follows from \eqref{v^*} and \eqref{r^*}.
		
		Moreover, due to \eqref{v^* le v} and \eqref{homohomo>1}, applying Proposition \ref{Homo Dirichlet} with $I$ replaced by $B_R(x_0)$ yields that 
		\begin{eqnarray}
			&&v^*(a, x)-2\ep\nonumber\\
			&\le&v^*_{\mu_{\max,B_{2R}(x_0)}, \be_{\min, B_{2R}(x_0)}, f_{\min, B_{2R}(x_0)}}(a)-\ep\nonumber\\
			&\le& \liminf_{d\to0}u^*_{d, \mu_{\max,B_{2R}(x_0)}, \be_{\min, B_{2R}(x_0)}, f_{\min, B_{2R}(x_0)}; B_{2R}(x_0)}(a, x)\ \; \text{ on }[0, a_+]\times\overline B_R(x_0),\label{ep2}
		\end{eqnarray}
		where $u^*_{d, \mu_{\max,B_{2R}(x_0)}, \be_{\min, B_{2R}(x_0)}, f_{\min, B_{2R}(x_0)}; B_{2R}(x_0)}$ stands for the unique positive solution of 
		\begin{equation}\nonumber%\label{B_2R homo}
			\begin{cases}
				w_a=d w_{xx}-\mu_{\max,B_{2R}(x_0)}(a)w, &(a, x)\in(0, a_m)\times B_{2R}(x_0),\\
				w(a, x)=0, &(a, x)\in(0, a_m)\times\partial B_{2R}(x_0),\\
				w(0, x)=f_{\min, B_{2R}(x_0)}\left(\int_{0}^{a_m}\beta_{\min, B_{2R}(x_0)}(a)w(a, x)da\right), &x\in B_{2R}(x_0),
			\end{cases}
		\end{equation}
		whose existence and uniqueness for sufficiently small $d>0$ follow from \eqref{homohomo>1} and Kang \cite[Theorem 4.2]{kang2022effects}.

		On the other hand, since $u^*_d$ is a positive strict super-solution of
		\begin{equation}\label{B_2R}
			\begin{cases}
				w_a=d w_{xx}-\mu(a, x)w, &(a, x)\in(0, a_+)\times B_{2R}(x_0),\\
				w(a, x)=0, &(a, x)\in(0, a_+)\times\partial B_{2R}(x_0),\\
				w(0, x)=f\left(x, \int_{0}^{a_+}\beta(a, x)w(a, x)da\right), &x\in B_{2R}(x_0),
			\end{cases}
		\end{equation}
		the comparison principle gives
		\begin{eqnarray}\label{w*d}
			u^*_{d, \mu_{\max,B_{2R}(x_0)}, \be_{\min, B_{2R}(x_0)}, f_{\min, B_{2R}(x_0)}; B_{2R}(x_0)}\le u^*_{d, B_{2R}(x_0)}(a, x)\le u^*_d(a, x)\ \;\text{ on }[0, a_+]\times B_{2R}(x_0),
		\end{eqnarray}
		where $u^*_{d, B_{2R}(x_0)}(a, x)$ is the unique positive solution of \eqref{B_2R}, whose existence and uniqueness for sufficiently small $d>0$ are ensured by \eqref{general} and Kang \cite[Theorem 4.2]{kang2022effects}. Indeed, the principal eigenvalue $\la_{0, B_{2R}(x_0)}$ of \eqref{B_2R} linearized at zero converges to $\al_{\max, B_{2R}(x_0)}$ as $d$ goes to zero, where $\al_{\max, B_{2R}(x_0)}$ satisfies
		$$
		\max_{x\in\overline B_{2R}(x_0)}f_u(x, 0)\int_0^{a_+}\be(a, x)e^{-\al_{\max, B_{2R}(x_0)}a}e^{-\int_0^a\mu(s, x)ds}da=1.
		$$
		While \eqref{general} and \eqref{cont} imply $\al_{\max, B_{2R}(x_0)}>0$ and thus $\la_{0, B_{2R}(x_0)}>0$.
		Hence we get from \eqref{ep2} and \eqref{w*d} that 
		$$
		v^*(a, x)-2\ep\le\liminf_{d\to0}u^*_{d}(a, x)\ \; \text{ on }[0, a_+]\times\overline B_R(x_0).
		$$       
		Thus the claim \eqref{claim11} has been verified.
		
		Following the same argument as in the last paragraph of the proof of Proposition \ref{Homo Dirichlet}, we can prove that \eqref{gg general} holds on $[0, a_+]\times I$ for any $a_+\in[a_c, a_m)$ and any compact subset $I$ of $(0, 1)$ with $\Ga(x)>1$ for all $x\in I$.
	\end{proof}
	
	With the above preparations, we are now in a position to complete the proof of Theorem \ref{AP LA WA}.
	\begin{proof}[Proof of Theorem \ref{AP LA WA}] Let $\Ga$ be defined as in \eqref{Q},choose any $a_+\in[a_c, a_m)$ and suppose that $I$ is a compact subset of $(0, 1)$ and $\Ga(x)>1$ for all $x\in I$. By the continuity of $\Ga(x)$, there exist open neighborhoods $U$ and $V$ of $I$ with smooth boundaries such that
		\begin{eqnarray}\label{UV}
			I\subset U\subset \overline U\subset V\subset (0, 1)\; \text{ and }\;\Ga(x)>1\ \text{ for all }x\in V.
		\end{eqnarray}
		Now let $\eta\in C^\infty([0, 1])$ be such that
		$$
		\eta\equiv0\ \, \text{ on }\overline U,\ \, \;\eta\equiv1\ \text{ on }[0, 1]\setminus V,\; \text{ and }\eta(x)\in(0, 1)\ \text{ for all }x\in V\setminus \overline U,
		$$
		and for every $\ga>0$, consider the function
		$$
		\be_\ga(a, x)=\be(a, x)+\ga\eta(x).
		$$
		Thus $\be_\ga(a, x)=\be(a, x)$ for all $(a, x)\in[0, a_+]\times U$ and $\be_\ga\ge\not= \beta$ in $[0, a_+]\times[0, 1]$. It follows that 
		\begin{equation}\label{equal}
			v^*\equiv v^*_\ga\ \ \;\text{ on }[0, a_+]\times I, 
		\end{equation}
		where $v^*_\ga$ for each $x\in I$ denotes the positive solution of 
		\begin{equation}\label{v ga}
			\begin{cases}
				v_a=-\mu(a, x)v, &a\in(0, a_+),\\
				v(0, x)=f\left(x, \int_0^{a_+}\be_\ga(a, x)v(a, x)\right),
			\end{cases}
		\end{equation}
		whose existence and uniqueness follow from 
		$$
		\Ga_\ga(x):=f_u(x, 0)\int_0^{a_+}\be_\ga(a, x)e^{-\int_0^a\mu(s, x)ds}da\ge\Ga(x)>1\ \; \text{ for all $x\in I$.}
		$$
		Moreover, according to \eqref{UV}, we have that, $\Ga_\ga(x)\ge1$ for every $x\in V$.  Similarly, if $x\in[0, 1]\setminus V$, then for sufficiently large $\ga>0$, we have 
		$$
		\Ga_\ga(x)=f_u(x, 0)\int_0^{a_+}(\be(a, x)+\ga)e^{-\int_0^a\mu(s, x)ds}da>1.
		$$
		Therefore, for sufficiently large $\ga>0$, one has $\Ga_\ga(x)>1$ for all $x\in[0, 1]$. Thus by Proposition \ref{auto} and \eqref{sub f}, we infer
		\begin{eqnarray}\label{sub ff}
			\limsup\limits_{d\to0}u^*_{d, \ga}(a, x)\le (1+\de) v^*_\ga(a, x),\ \; \forall(a, x)\in[0, a_+]\times [0, 1], 
		\end{eqnarray}
		where $u^*_{d, \ga}$ denotes the positive solution of
		\begin{equation}\label{u ga}
			\begin{cases}
				u_a=d u_{xx}-\mu(a, x)u, &(a, x)\in(0, a_m)\times (0, 1),\\
				u_x(a, x)=0, &(a, x)\in(0, a_m)\times\{0, 1\},\\
				u(0, x)=f\left(x, \int_{0}^{a_m}\beta_\ga(a, x)u(a, x)da\right), &x\in (0, 1),
			\end{cases}
		\end{equation}
		whose existence and uniqueness for sufficiently small $d>0$ follow from $\Ga_\ga(x)>1$ for all $x\in[0, 1]$ and Theorem \ref{Dlambda La=0}.
		
		On the other hand, due to $\be_\ga\ge\be$, a comparison analysis yields
		\begin{eqnarray}\label{u ga u}
			u^*_{d, \ga}(a, x)\ge u^*_d(a, x)\ \, \; \text{ on }[0, a_+]\times[0, 1].
		\end{eqnarray} 
		By \eqref{equal}, \eqref{sub ff} and \eqref{u ga u}, we deduce
		\begin{eqnarray}\label{super}
			\limsup\limits_{d\to0}u^*_{d}(a, x)\le (1+\de) v^*(a, x),\ \, \; \forall(a, x)\in[0, a_+]\times I.
		\end{eqnarray}
		In summary, combining \eqref{super} and Proposition \ref{General}, we have obtained that for any $0<\de, \ep\ll1$, any $a_+\in[a_c, a_m)$ and any compact subset $I$ with $\Ga(x)>1$ for all $x\in I$, 
		$$
		(1+\de)v^*(a, x)\ge\limsup\limits_{d\to0}u^*_d(a, x)\ge\liminf\limits_{d\to0}u^*_d(a, x)\ge v^*(a, x)-\ep, \quad \forall (a, x)\in [0, a_+]\times I.
		$$
		Due to the arbitrariness of $\de$ and $\ep$, the first assertion of Theorem \ref{AP LA WA} holds. 
		
		For the second assertion of Theorem \ref{AP LA WA}, we can use a similar argument as above. Indeed, suppose that $I$ is a compact subset of $(0, 1)$ and $\Ga(x)\le1$ for all $x\in I$. For every $\eta>0$, define 
		\begin{equation}\nonumber
			\be_\eta(a, x)=\be(a, x)e^{[(1+\eta)|\al(x)|+\eta]a},
		\end{equation}
		where $\al(x)$ is defined through
		$$
		f_u(x, 0)\int_0^{a_+}\be(a, x)e^{-\al(x)a}e^{-\int_0^a\mu(s, x)ds}da=1.
		$$
		Since $\eta>0$, it is apparent that $\be_\eta>\be$ in $[0, a_+]\times[0, 1]$. Moreover, there holds
		\begin{equation}\label{Ga>1}
			\Ga_\eta(x):=f_u(x, 0)\int_0^{a_+}\be_\eta(a, x)e^{-\int_0^a\mu(s, x)ds}da>1\ \;\text{ for all }x\in[0, 1].
		\end{equation}
		Indeed, \eqref{Ga>1} is obvious if $\Ga(x)>1$. Suppose $\Ga(x)\le1$, then by definition, $\al(x)\le0$ and 
		$$
		\Ga_\eta(x)=f_u(x, 0)\int_0^{a_+}\be(a, x)e^{-\al(x)a}e^{\eta(1-\al(x))a}e^{-\int_0^a\mu(s, x)ds}da>1.
		$$
		In view of Proposition \ref{auto} and \eqref{sub f}, we have
		\begin{eqnarray}\label{sub fff}
			\limsup\limits_{d\to0}u^*_{d, \eta}(a, x)\le (1+\de) v^*_\eta(a, x),\ \, \; \forall(a, x)\in[0, a_+]\times [0, 1], 
		\end{eqnarray}
		where $u^*_{d, \eta}$ and $v^*_\eta$ denote the positive solutions of \eqref{u ga} and \eqref{v ga} with $\be_\ga$ replaced by $\be_\eta$, respectively, whose existence and uniqueness for sufficiently small $d>0$ are guaranteed by $\Ga_\eta(x)>1$ for all $x\in[0, 1]$ and Theorem \ref{Dlambda La=0} again. 
		
		On the other hand, due to $\be_\eta\ge\be$, a comparison argument gives
		\begin{eqnarray}\label{u eta u}
			u^*_{d, \eta}(a, x)\ge u^*_d(a, x)\ \ \; \text{ on }[0, a_+]\times[0, 1].
		\end{eqnarray} 
		Consequently, by \eqref{sub fff} and \eqref{u eta u}, we find that
		\begin{eqnarray}\label{super eta}
			\limsup\limits_{d\to0}u^*_{d}(a, x)\le (1+\de) v^*_\eta(a, x),\ \, \; \forall(a, x)\in[0, a_+]\times[0, 1].
		\end{eqnarray}
		Due to the continuity of $\eta\longmapsto v^*_\eta$, letting $\eta\to0$ in \eqref{super eta} yields
		$$
		\limsup\limits_{d\to0}u^*_{d}(a, x)\le (1+\de) v^*_0(a, x),\ \, \; \forall(a, x)\in[0, a_+]\times[0, 1].
		$$
		As $\al(x)\le0$ in $I$, and in turn
		$$
		f_u(x, 0)\int_0^{a_+}\be_0(a, x)e^{-\int_0^a\mu(s, x)ds}da=f_u(x, 0)\int_0^{a_+}\be(a, x)e^{-\al(x)a}e^{-\int_0^a\mu(s, x)ds}da=1\ \;\text{ for all }x\in I,
		$$
		this implies that $v^*_0\equiv0$ in $I$ by Ducrot et al. \cite[Lemma 5.4]{Ducrot2022Age-structuredII}, and so the second assertion of Theorem \ref{AP LA WA} holds. The proof of Theorem \ref{AP LA WA} is now complete.
	\end{proof}
	
	\subsection{Asymptotic profile for large diffusion}
	
	In this subsection, we aim to examine the asymptotic profile of \( u^* \) under large diffusion regardless of the presence of advection and provide the proof of Theorem \ref{d to inf}. As before, we choose any $a_+\in[a_c, a_m)$ from now on. 
	
	\begin{proof}[Proof of Theorem \ref{d to inf}]
		First, we can apply a similar super- and sub-solution method as in Ducrot et al. \cite[Theorem 4.4]{Ducrot2022Age-structuredII} to establish the existence and uniqueness of the solution to problem \eqref{h}. Specifically, the constant \( L \) can be chosen as a super-solution, while a sub-solution can be selected as \( \epsilon \phi_\delta \), where \(\phi_\delta\) satisfies:
		\[
		\begin{cases}
			\phi'_\delta(a) = -\int_0^1 \mu(a, x) \, dx - \lambda_\delta \phi_\delta(a), & a \in (0, a_+),\\
			\phi_\delta(0) = \int_0^1 \int_0^{a_+} (f_u(x, 0) - \delta) \beta(a, x) \phi_\delta(a) \, da \, dx.
		\end{cases}
		\]
		Observe that \eqref{al average} implies \(\lambda_\delta > 0\) for \(\delta > 0\) sufficiently small, thus ensuring that \(\epsilon \phi_\delta\), when $\epsilon$ is chosen small enough, indeed serves as a sub-solution to \eqref{h}. Then, by employing a monotone argument similar to that in \cite[Theorem 4.4]{Ducrot2022Age-structuredII}, we can obtain the existence and uniqueness of solutions. Here, we omit the detailed steps.
		
		Next we set $v(a, x)=e^{-(\La/d)x}u^*(a, x)$ and transform the problem \eqref{logistic} into 
		\begin{equation}\label{vv}
			\begin{cases}
				v_a=dv_{xx}+\La v_x-\mu(a, x)v, &(a, x)\in(0, a_m)\times(0, 1),\\
				v(0, x)=e^{-(\La/d)x}f\left(x, e^{(\La/d)x}\int_0^{a_m}\beta(a, x)v(a, x)da\right), &x\in(0, 1),\\
				v_x(a, x)=0, &(a, x)\in(0, a_m)\times\{0, 1\}.
			\end{cases}
		\end{equation}
		We make a variable transformation: $x=d^{1/2}y$ and denote $w(a, y)=v(a, d^{1/2}y)=v(a, x)$. Thus \eqref{vv} becomes
		\begin{equation}\label{w-equation}
			\begin{cases}
				w_a=w_{yy}+\frac{\La}{d^{1/2}} w_y-\widetilde\mu(a, y)w, &(a, y)\in(0, a_m)\times(0, d^{-1/2}),\\
				w(0, y)=e^{-(\La/d^{1/2})y}f\left(d^{1/2}y, e^{(\La/d^{1/2})y}\int_{0}^{a_m}\widetilde\beta(a, y)w(a, y)da\right), &y\in(0, d^{-1/2}),\\
				w_y(a, y)=0, &(a, y)\in(0, a_m)\times\{0, d^{-1/2}\},
			\end{cases}
		\end{equation}
		where $\widetilde\mu(a, y)=\mu(a, d^{1/2}y)$ and $\widetilde\be(a, y)=\beta(a, d^{1/2}y)$.
		
		Since $\widetilde\mu$ and $\widetilde\beta$ are bounded on $[0, a_+]\times[0, d^{-1/2}]$, combined with the estimate $v(a, x)\le L$ on $[0, a_+]\times[0, 1]$, for any given $p>1$, we can apply the standard global $L^p$ estimates for parabolic equations (up to the boundary and initial data; see, for instance \cite{LSU,Lie}) to the system \eqref{w-equation} restricted on $[0, a_+]\times[0, d^{-1/2}]$ to assert that 
		$$
		\norm{w}_{W^{1, p}((0, a_+), W^{2, p}(0, d^{-1/2}))}\le C_0
		$$
		for some positive constant $C_0$, which may vary from place to place but is independent of all large $d$. Hence, appealing to the embedding theory (by taking sufficiently large $p$), we can infer
		$$
		\norm{w}_{C^{(1+\de)/2, 1+\de}([0, a_+]\times[0, d^{-1/2}])}\le C_0
		$$
		for some $\de\in(0, 1)$. Going back to the solution $u^*$, for sufficiently large $d>0$, we then have
		\begin{equation}\label{partial u_d}
			\norm{u^*}_{C^{(1+\de)/2, 1+\de}([0, a_+]\times[0, 1])}\le C_0, \quad \norm{\partial_xu^*}_{C([0, a_+]\times[0, 1])}\le\frac{C_0}{d^{1/2}}.
		\end{equation}
		Therefore, by passing up to a sequence, we may assume that 
		$$
		u^*\to u_*\ \  \text{ in }C^{(1+\de/2)/2, 1+\de/2}([0, a_+]\times[0, 1])\ \ \text{ as }d\to\infty,
		$$
		where $u_* \in C^{(1+\de/2)/2, 1+\de/2}([0, a_+]\times[0, 1])$ is a nonnegative function. Due to the last estimate in \eqref{partial u_d}, clearly $u_*$ satisfies $\partial_xu_*\equiv0$ in $[0, a_+]\times[0, 1]$, and so $u_*(a, x)=u_*(a)$ is a function independent of the spatial variable.
		
		On the other hand, integrating the equation \eqref{logistic} that $u^*$ satisfies over $(0, a)\times(0, 1)$, we find
		$$
		\int_0^1u^*(a, x)dx-\int_0^1u^*(0, x)dx=-\int_0^a\int_0^1\mu(s, x)u^*(s, x)dxds.
		$$
		By taking $d\to\infty$, it is easily seen that
		$$
		u_*(a)-u_*(0)=-\int_0^{a}\int_0^1\mu(s, x)u_*(s)dxds.
		$$
		Therefore we differentiate the above equality with respect to $a$ to see that $u_*$ solves the first equation in the ODE problem \eqref{h}. In addition, due to the second equation of \eqref{logistic}, $u_*(0)$ satisfies
		$$
		u_*(0)=\int_0^1f\left(x, \int_0^{a_m}\beta(a, x)u_*(a)da\right)dx.
		$$
		
		To complete the proof, it remains to show that $u_*>0$ in $[0, a_+]$. Let us prove it by contradiction via supposing that $u_*(a)=0$ for some $a\in[0, a_+]$. By \eqref{h}, it is easily seen that $u_*\equiv0$ in $[0, a_+]$. Thus,
		$$
		u^*(a, x)\to0\ \ \text{ uniformly on $[0, a_+]\times[0, 1]$\ \, as $d\to\infty$}.
		$$
		Now we define 
		$$
		z(a, x):=\frac{u^*(a, x)}{\max_{[0, a_+]\times[0, 1]}u^*(a, x)}, \;\forall  d>0.
		$$
		Then $\norm{z}_{L^\infty((0, a_+)\times(0, 1))}=1$ for all $d>0$ and $z$ solves the following problem:
		\begin{equation}\label{zz}
			\begin{cases}
				z_a=dz_{xx}-\La z_x-\mu(a, x)z, &(a, x)\in(0, a_m)\times(0, 1),\\
				z(0, x)=\frac{f\left(x, \int_0^{a_m}\beta(a, x)u(a, x)da\right)}{u(a, x)}z(a, x), &x\in(0, 1),\\
				dz_x(a, x)-\La z(a, x)=0, &(a, x)\in(0, a_m)\times\{0, 1\}.
			\end{cases}
		\end{equation}
		A similar analysis as before, together with Assumption \ref{assump-f}, enables one to conclude that up to a subsequence of $d$, 
		$$
		z(a, x)\to z_*(a)\ \ \text{ uniformly on $[0, a_+]\times[0, 1]$\ \ as $d\to\infty$},
		$$
		where $z_*\in C^{(1+\de/2)/2, 1+\de/2}([0, a_+]\times[0, 1])$. Due to $\norm{z_d}_{L^\infty((0, a_+)\times(0, 1))}=1$ for all $d>0$, it follows that 
		$z_*(a)\geq,\not\equiv0$ on $[0, a_+]$. 
		
		By integrating the first equation of \eqref{zz} over $[0, a]\times[0, 1]$, we find 
		$$
		\int_0^1z(a, x)dx-\int_0^1z(0, x)dx=-\int_0^a\int_0^1\mu(s, x)z(s, x)dxds.
		$$
		Sending $d\to\infty$ yields that 
		$$
		z_*(a)-z_*(0)=-\int_0^az_*(s)\int_0^1\mu(s, x)dxds,
		$$
		which implies that
		\begin{equation}\label{z*}
			z'_*(a)=-\int_0^1\mu(a, x)dx\; z_*(a).
		\end{equation}
		Integrating the second equation of \eqref{zz} over $[0, 1]$ and letting $d\to\infty$ yields that (via chain rule) 
		\begin{equation}\label{z*(0)}
			z_*(0)=\int_0^1\int_0^{a_c}f_u(x, 0)\beta(a, x)z_*(a)dadx.
		\end{equation}
		Finally, combining \eqref{z*} and \eqref{z*(0)}, we have 
		$$
		\int_0^1\int_0^{a_c}f_u(x, 0)\beta(a, x)e^{-\int_0^a\int_0^1\mu(s, x)dxds}dadx=1,
		$$
		which is a contradiction with \eqref{al average}. Thus $u_*(a)>0$ on $[0, a_+]$ and the proof is complete.
	\end{proof}
	
	\section{Appendix}\label{appendixtheories}
	
	To illustrate our constructions of generalized super- and sub-solutions in the proofs of Theorems \ref{Lalambda} and \ref{Dlambda}, we consider a fixed \(a\), and plot two profiles: the super-solution \(\overline{w}\) (see \eqref{large_advec_super_sol}) for large advection in the proof of Theorem \ref{Lalambda}, and the sub-solution \(\underline{w}\) (see \eqref{small d sub-sol}) for small diffusion in the proof of Theorem \ref{Dlambda}. These profiles are depicted in Figures \ref{overline w} and \ref{underline w}, respectively.
	\begin{figure}\label{super sub-solutions}
		\begin{subfigure}{0.5\textwidth}
			\centering
			\includegraphics[width=\textwidth]{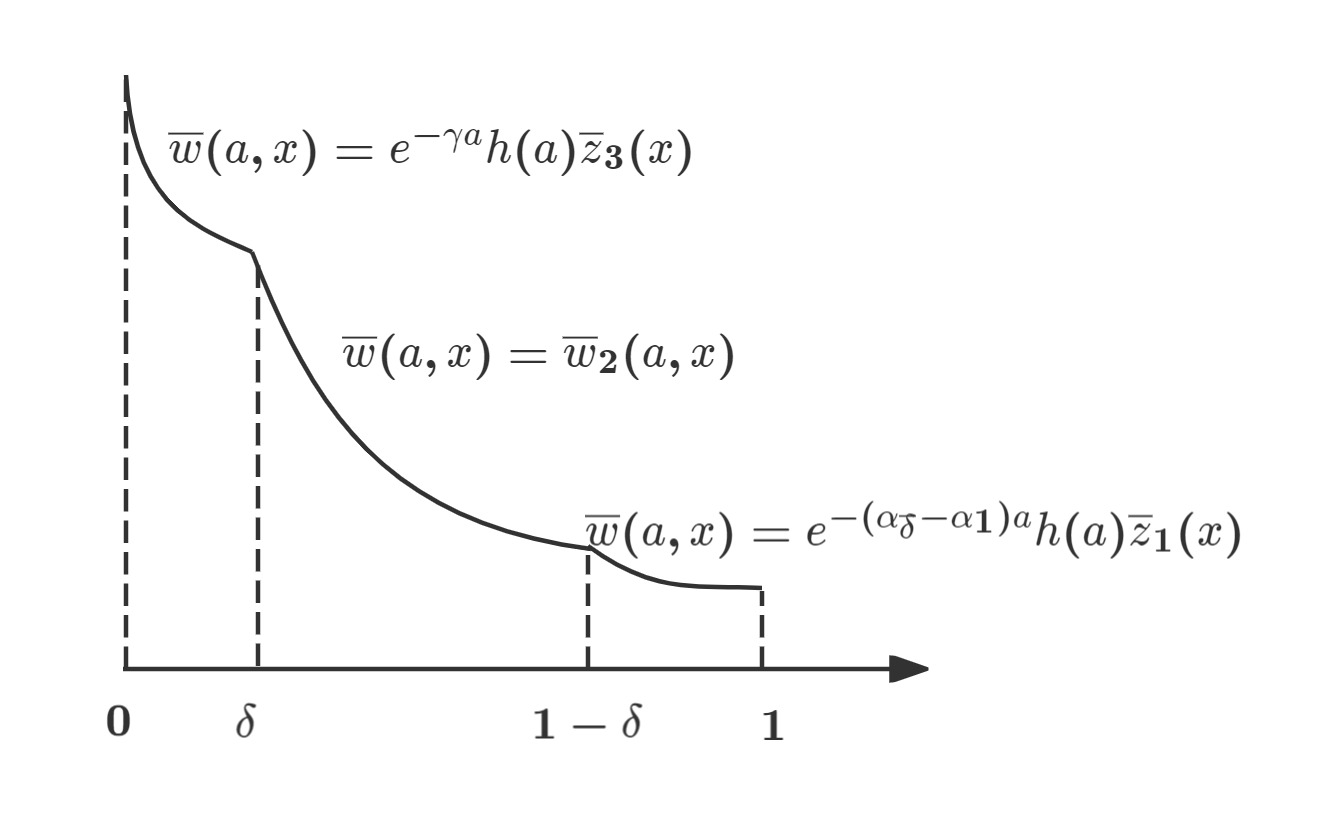}
			\subcaption{Profile of $\overline w$ for fixed $a$ with large advection.\label{overline w}}
		\end{subfigure}
		\begin{subfigure}{0.5\textwidth}
			\centering
			\includegraphics[width=\textwidth]{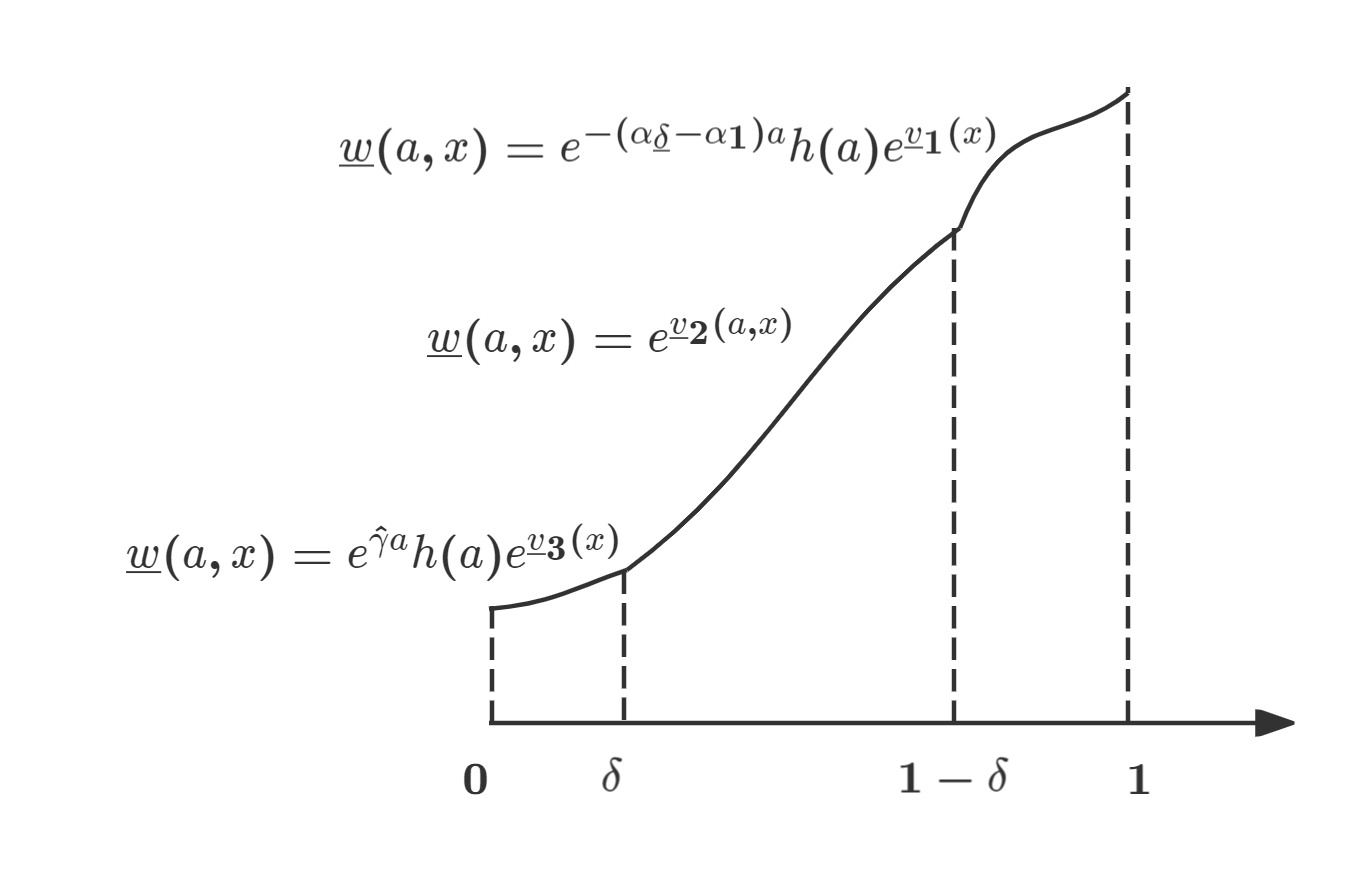}
			\subcaption{Profile of $\underline w$ for fixed $a$ with small diffusion.\label{underline w}}
		\end{subfigure}
		\caption{Profiles of $\overline w$ and $\underline w$ for fixed $a$.}
	\end{figure}

	\vskip25pt
	\noindent{\bf Data availability statement}:\ \ No new data were created or analysed in this study.\\
	\bigskip
	
	\noindent\textbf{Acknowledgements}. We thank the anonymous reviewers for their helpful comments and suggestions.
	\vskip25pt
	
	\bibliographystyle{plain}

\end{document}